\newtheorem{lemma}{Lemma}[section]
\newtheorem{proposition}{Proposition}[section]
\newtheorem{theorem}{Theorem}[section]
\theoremstyle{definition}
\newtheorem{definition}{Definition}[section]
\theoremstyle{remark}
\newtheorem{remark}{Remark}[section]
\newcommand{\ad}{\operatorname{ad}}
\newcommand{\Ad}{\operatorname{Ad}}
\newcommand{\SL}{\operatorname{SL}}
\newcommand{\SO}{\operatorname{SO}}
\newcommand{\Stab}{\operatorname{Stab}}
\newcommand{\Lie}{\operatorname{Lie}}
\begin{document}
\title{A Shrinking Target Problem with Target at Infinity in Rank One Homogeneous Spaces}
\author{Cheng Zheng}
\date{}
\maketitle

\let\thefootnote\relax\footnotetext{Mathematics Subject Classification: Primary 37A17; Secondary 11J83.\\Keywords: shrinking target problem, excursion rates of orbits, Jarn\'ik-Besicovitch theorem, Diophantine approximation in Heisenberg groups\\Mathematics Department, Technion-Israel Institute of Technology, Haifa, 32000, Israel\\ Email: cheng.zheng@campus.technion.ac.il}

\vspace{-0.2in}
\begin{abstract}
In this paper, we give a definition of Diophantine points of type $\gamma$ for $\gamma\geq0$ in a homogeneous space $G/\Gamma$ and compute the Hausdorff dimension of the subset of points which are not Diophantine of type $\gamma$ when $G$ is a simple Lie group of real rank one. We also deduce a Jarnik-Besicovitch Theorem on Diophantine approximation in Heisenberg groups.
\end{abstract}

\section{Introduction and main results}\label{intro}
\subsection{Introduction}
The classical Jarn\'ik-Besicovitch theorem \cite{Bes,J} in Diophantine approximation theory says that for any $\gamma\geq1$ $$\dim_H\left\{x\in\mathbb R: \exists C>0 \text{ such that } \left|x-\frac mn\right|\geq\frac C{n^{\gamma+1}}\left(\forall\frac mn\in\mathbb Q\right)\right\}^c=\frac2{1+\gamma}$$ where $\dim_H$ denotes the Hausdorff dimension with respect to the Euclidean distance and $S^c$ the complement of a subset $S$ in a space. Using a generalized version of Dani's correspondence established by Kleinbock and Margulis \cite{D0, KM1}, one could reformulate this theorem as follows. Let $X_2=\SL(2,\mathbb R)/\SL(2,\mathbb Z)$. Then for any $\gamma\geq1$ 
\begin{align*}
\dim_H\left\{p\in X_2:\exists C>0 \text{ such that }\delta(a_tp)\geq Ce^{-\frac{\gamma-1}{\gamma+1} t}(\forall t>0)\right\}^c=2+\frac2{1+\gamma}.
\end{align*}
Here $\dim_H$ denotes the Hausdorff dimension with respect to a metric on $X_2$ induced by a norm on the Lie algebra of $\SL(2,\mathbb R)$, $\delta(\cdot)$ is the function defined on the space of unimodular lattices $\Lambda\subset\mathbb R^2$ by $$\delta(\Lambda)=\inf_{v\in\Lambda\setminus\{0\}}\|v\|$$ and $a_t=\text{diag}(e^t,e^{-t})$ is the diagonal group in $\SL(2,\mathbb R)$. Note that $\delta(a_tp)$ measures the excursion rate of an orbit $\{a_tp\}$ in $\SL(2,\mathbb R)/\SL(2,\mathbb Z)$.

This Jarn\'ik-Besicovitch theorem has been generalized in various cases. For instance, Meli\'an and Pestana \cite{MP} obtain a formula for Hausdorff dimensions of subsets of points with different geodesic excursion rates in a finite-volume hyperbolic manifold. As a consequence, their result implies a Jarn\'ik-Besicovitch theorem on Diophantine approximation in some quadratic number fields. Also the work by Dodson \cite{D1} describes Hausdorff dimensions of subsets of non-Diophantine matrices with different orders, which could be rephrased for points in the homogeneous space $\SL(n,\mathbb R)/\SL(n,\mathbb Z)$ with different excursion rates under some semisimple flow \cite{KM1}.

Note that the examples above are related to a shrinking target problem proposed by Hill and Velani \cite{HV1}. Specifically, let $f:X\to X$ be a continuous transformation on a metric space $X$. Consider the subset $$\{x\in X: f^n(x)\in B(x_0,r(n))\text{ for infinitely many }n\}$$ where $x_0\in X$,  $r:\mathbb N\to\mathbb R$ is a decreasing function and $B(x_0,r(n))$ denotes the ball of radius $r(n)$ around $x_0$. This subset collects the points whose orbits under $f$ hit a shrinking target infinitely many times. One could then ask what is the Hausdorff dimension of this subset, or generally what is the size of this subset. The example of the geodesic flow $a_t=\text{diag}(e^t,e^{-t})$ on $\SL(2,\mathbb R)/\SL(2,\mathbb Z)$ mentioned in the beginning fits an analogue of this problem quite well (similarly for \cite{D1,MP}), where the target $x_0$ is the cusp $\infty$ of $\SL(2,\mathbb R)/\SL(2,\mathbb Z)$, and the subset $$\left\{p\in \SL(2,\mathbb R)/\SL(2,\mathbb Z):\exists C>0 \text{ such that }\delta(a_tp)\geq Ce^{-\frac{\gamma-1}{\gamma+1} t}(\forall t>0)\right\}^c$$ collects the points whose orbits under the geodesic flow approach the cusp infinitely often with certain rate. 

The shrinking target problem is studied by Hill and Velani for the expanding rational maps of the Riemann sphere on Julia sets \cite{HV1,HV2}. The approach in \cite{HV1,HV2} is later developed, e.g. by Urba\'nski \cite{U1} for conformal iterated function systems and Reeve \cite{R11} for the expanding Markov maps on intervals. For geodesic flows on compact manifolds with negative curvature, one could refer to \cite{HP1} by Hersonsky and Paulin.

In this paper, we consider the shrinking target problem with target at $\infty$ in the homogeneous space $G/\Gamma$ where $G$ is a simple Lie group of real rank one and $\Gamma$ is a non-uniform lattice in $G$. Let $\{a_t\}$ denote a semisimple flow (i.e., every element in the one-parameter subgroup $\{a_t\}$ is Ad-semisimple in $G$) on $G/\Gamma$. According to the viewpoint of \cite{HV1}, we would consider the dynamical system $$a_t:G/\Gamma\to G/\Gamma$$ and study the subset of points whose trajectories under this flow enter into cusps infinitely often with certain rate. Eventually, we will derive an explicit formula for the Hausdorff dimension of such a subset, which generalizes \cite{MP}. We will then deduce a Jarn\'ik-Besicovitch theorem on Diophantine approximation in Heisenberg groups.

We remark that this question in the context of geodesic flows on negatively curved manifolds with cusps is discussed by Hersonsky and Paulin \cite{HP2,HP3,HP4}, where they establish a Khintchine-Sullivan type theorem showing a necessary and sufficient condition for a subset of Diophantine points having full or null measure \cite{HP4}. Here we discuss a conceptually further question  about Hausdorff dimensions of null Diophantine subsets.

Our proof is mainly based on the classical method of constructing Cantor-type subsets (cf.\cite{HV2, KM}). We reduce our problem to computing the Hausdorff dimension of a limsup subset of open boxes via the structure theory of Lie groups. It turns out that in some case, these open boxes are not cubes, but rather rectangles with different side lengths due to the distinct positive roots (or different expanding rates) of the semisimple flow $\{a_t\}$ on $G/\Gamma$. This non-conformal phenomenon is quite different from \cite{HV1, HV2, MP, R11, U1}, and if one uses formulas for Hausdorff dimensions of limsup subsets of balls (for example \cite{BV,DRV}), it seems difficult to obtain a sharp lower bound. Instead, we divide these boxes further into cubes. Based on these new cubes, we would be able to construct a Cantor-type subset, and by a result of McMullen \cite{M} and Urba\'nski \cite{U}, one could obtain the desired lower bound. The calculation for the lower bound also relies on a counting problem regarding the distribution of some set consisting of rational points, which is related to the mixing property of the semisimple flow $\{a_t\}$. The calculation for the upper bound is similar.

\subsection{Main results}
For any $p\in G/\Gamma$, we denote by $\Stab(p)$ the stabilizer of $p$ in $G$. If $p=g\Gamma$, then $\Stab(p)=g\Gamma g^{-1}$ is a lattice conjugate to $\Gamma$. Fix a norm $\|\cdot\|_\mathfrak g$ on the Lie algebra $\mathfrak g$ of $G$, and denote by $d_G(\cdot,\cdot)$ and $d_{G/\Gamma}(\cdot,\cdot)$ the induced distances on $G$ and $G/\Gamma$ respectively. In $G/\Gamma$, we will always discuss Hausdorff dimension with respect to $d_{G/\Gamma}(\cdot,\cdot)$. 

The Cartan decomposition of $G$ with respect to a Cartan involution $\theta$ is written by $$\mathfrak g=\mathfrak k\oplus\mathfrak p$$ where $\mathfrak k$ and $\mathfrak p$ are the $1$-eigenspace and $(-1)$-eigenspace of $\theta$ respectively. Let $\mathfrak a$ be the Lie algebra of the one parameter subgroup $\{a_t\}$. Since $G$ is of rank one, we may assume that $\mathfrak a\subset\mathfrak p$ is a maximal abelian subalgebra of $\mathfrak p$. Let $K$ be the maximal compact subgroup with the Lie algebra $\mathfrak k$. We write the root space decompostion of $\mathfrak g$ with respect to the adjoint action of $\{a_t\}$ as $$\mathfrak g=\mathfrak g_{-2\alpha}\oplus\mathfrak g_{-\alpha}\oplus\mathfrak g_0\oplus\mathfrak g_\alpha\oplus\mathfrak g_{2\alpha}.$$ Here $\alpha$ is a simple root and we think of it as a positive number via the identification $$\mathfrak a^*\cong\mathbb R.$$ In other words, we fix a parametrization of the one-parameter subgroup $\{a_t\}$, and let $\alpha>0$ such that $$\Ad a_t(v)=e^{\alpha t}v\quad(\forall v\in\mathfrak g_\alpha),\qquad\Ad a_t(v)=e^{2\alpha t}v\quad(\forall v\in\mathfrak g_{2\alpha})$$ $$\Ad a_t(v)=e^{-\alpha t}v\quad(\forall v\in\mathfrak g_{-\alpha}),\qquad\Ad a_t(v)=e^{-2\alpha t}v\quad(\forall v\in\mathfrak g_{-2\alpha}).$$ Note that the root spaces $\mathfrak g_{-2\alpha}$ and $\mathfrak g_{2\alpha}$ may be empty. 

\begin{definition}
For any $p\in G/\Gamma$, we define the injectivity radius at $p$ by $$\eta(p)=\inf_{v\in\Stab(p)\setminus\{e\}}d_G(v,e).$$
\end{definition}
\begin{remark}\label{rm}
It is well known that a sequence of points $\{p_n\}\in G/\Gamma$ diverges if and only if $\eta(p_n)\to0$ (\cite{R} Theorem 1.12).
\end{remark}

\begin{definition}\label{def12}
A point $p\in G/\Gamma$ is Diophantine of type $\gamma$ (with respect to $\{a_t\}$) if there exists a constant $C>0$ such that $$\eta(a_tp)\geq Ce^{-\gamma t}\text{ for all } t>0.$$ We denote by $S_\gamma$ the subset of Diophantine points of type $\gamma$, and by $S_\gamma^c$ the complement of $S_\gamma$ in $G/\Gamma$.
\end{definition}
\begin{remark}
Here $\eta(a_tp)$ measures the excursion rate of the orbit $\{a_tp\}$ in $G/\Gamma$. By \cite{D2} and \cite{KM}, the subset $S_0$ has full Hausdorff dimension.
\end{remark}

Now we can state the main theorem in this paper. 
\begin{theorem}[Main theorem]\label{cmaintheorem}
Let $G$ be a linear simple Lie group of real rank one and $\Gamma$ a non-uniform lattice in $G$. Let $U$ be an open subset in $G/\Gamma$. If $\mathfrak g_{2\alpha}=0$, then the Hausdorff dimension of $S_\gamma^c\cap U$ $(0\leq\gamma<\alpha)$ is equal to $$\dim\mathfrak g_{-\alpha}+\dim\mathfrak g_0+\frac{\alpha-\gamma}\alpha\dim\mathfrak g_\alpha.$$
If $\mathfrak g_{2\alpha}\neq0$, then the Hausdorff dimension of $S_\gamma^c\cap U$ $(0\leq\gamma<2\alpha)$ is
$$\dim\mathfrak g_{-2\alpha}+\dim\mathfrak g_{-\alpha}+\dim\mathfrak g_0+\frac{4\alpha-\gamma}{4\alpha}\dim\mathfrak g_\alpha+\frac{2\alpha-\gamma}{2\alpha}\dim\mathfrak g_{2\alpha}.$$
\end{theorem}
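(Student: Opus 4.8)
The plan is to translate the statement, via a Dani-type correspondence, into a Jarn\'ik--Besicovitch problem for a $\limsup$ set sitting inside the expanding horospherical subgroup, and then to prove the resulting dimension formula by the two classical half-arguments: a covering bound from above and a Cantor-set construction plus the mass distribution principle from below.

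\textbf{Step 1 --- localisation and the correspondence.} Since $\Gamma$ is non-uniform, $G/\Gamma$ has finitely many cusps, and by the Margulis lemma, whenever $\eta(q)$ is small the short elements of $\Stab(q)$ all lie in a single $\Gamma$-cuspidal unipotent subgroup. Writing $p=g\Gamma$ and using $\Ad(a_t)w=e^{-2\alpha t}w_{-2\alpha}+e^{-\alpha t}w_{-\alpha}+w_0+e^{\alpha t}w_\alpha+e^{2\alpha t}w_{2\alpha}$, one sees that $\eta(a_tp)\to0$ forces the relevant cuspidal subgroup of $g\Gamma g^{-1}$ to be conjugate into $N^-=\exp(\mathfrak g_{-\alpha}\oplus\mathfrak g_{-2\alpha})$, i.e.\ the orbit $\{a_tp\}$ makes excursions into cusps, the depth being governed by an $\Ad(a_t)$-contracted element of $N^-$. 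Fixing $\|\cdot\|_{\mathfrak g}$ to be $\Ad(K)$-invariant (so that $\eta$ is $K$-conjugation invariant) and working in a local chart $N^-\times H\times N^+\to G/\Gamma$ ($H=\exp\mathfrak g_0$, $N^+=\exp(\mathfrak g_\alpha\oplus\mathfrak g_{2\alpha})$) in which $\{a_t\}$ acts by left translation --- contracting the $N^-$-factor, merely translating along $A$ and fixing the $M$-part of $H$, and expanding the $N^+$-factor --- one checks that, up to bounded and exponentially small relative errors, $\eta(a_tp)$ depends only on the $N^+$-coordinate of $p$; moving $p$ in the $N^-$ and $H$ directions does not affect membership in $S_\gamma$. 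Hence $S_\gamma^c\cap U$ is, up to a bi-Lipschitz (in particular dimension-preserving) change of coordinates on charts covering $U$, a product of an open subset of $N^-\times H$ with a set $B_\gamma\subseteq N^+$, so that
\[
\dim_H(S_\gamma^c\cap U)=\dim\mathfrak g_{-2\alpha}+\dim\mathfrak g_{-\alpha}+\dim\mathfrak g_0+\dim_H B_\gamma,
\]
and it remains to identify $B_\gamma$ and compute its Hausdorff dimension for the Euclidean metric on $\mathfrak g_\alpha\oplus\mathfrak g_{2\alpha}$.

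\textbf{Step 2 --- $B_\gamma$ as a $\limsup$ set.} Unwinding $\eta(a_tp)<Ce^{-\gamma t}$ through the explicit rescaling by $\Ad(a_{-t})$ (by $e^{-\alpha t}$ on $\mathfrak g_\alpha$, by $e^{-2\alpha t}$ on $\mathfrak g_{2\alpha}$), together with the relation $\eta\asymp d_G(\cdot,e)^2$ for short unipotents near a cusp --- it is this squaring, once one optimises over the orbit time $t$, that produces the denominators $4\alpha$ and $2\alpha$, just as the classical rate $e^{-\frac{\gamma-1}{\gamma+1}t}$ comes from minimising a squared length along the orbit --- one obtains
\[
B_\gamma=\limsup_{r\in\mathcal C}\Big\{x\in N^+: d_{CC}(x,r)<q(r)^{-\beta(\gamma)}\Big\},
\]
where $d_{CC}$ is a Carnot--Carath\'eodory metric on $N^+$ adapted to $\{a_t\}$, $\mathcal C\subset N^+$ is the countable set of cusp points visible in the chart --- the $\Gamma$-translates of the finitely many cusps --- each with a complexity $q(r)\ge1$ satisfying $\#\{r\in\mathcal C:q(r)\le T,\ r\in K_0\}\asymp T^{\mathsf Q}$ for fixed $K_0\subset N^+$ with non-empty interior, $\mathsf Q=\dim\mathfrak g_\alpha+2\dim\mathfrak g_{2\alpha}$ being the homogeneous dimension of $N^+$, and $\beta(\gamma)$ the explicit exponent dictated by the correspondence, with $\beta(0)$ critical so that $B_0$ has full measure (consistently with the Remark after Definition~\ref{def12}). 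When $\mathfrak g_{2\alpha}=\emptyset$ this is an ordinary Euclidean approximation problem on $\mathfrak g_\alpha$; when $\mathfrak g_{2\alpha}\neq\emptyset$, $N^+$ is a graded (generalised Heisenberg) nilpotent group and $d_{CC}$ is comparable to the Euclidean distance from $\mathfrak g_\alpha$ but to its square root from $\mathfrak g_{2\alpha}$, which is the geometric source of the asymmetry between the $\mathfrak g_\alpha$- and $\mathfrak g_{2\alpha}$-terms in the formula.

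\textbf{Step 3 --- computing $\dim_H B_\gamma$, and the main difficulty.} For the upper bound, cover $B_\gamma$, for each large $Q$, by the sets $\{d_{CC}(\cdot,r)<q(r)^{-\beta}\}$ with $q(r)\ge Q$; each is a Euclidean ball of radius $\asymp q(r)^{-\beta}$ in the $\mathfrak g_\alpha$-directions times one of radius $\asymp q(r)^{-2\beta}$ in the $\mathfrak g_{2\alpha}$-directions, and subdividing it into Euclidean balls of the smaller radius and summing via the counting asymptotics yields a series that converges exactly when the Hausdorff exponent exceeds the value in the theorem; letting $Q\to\infty$ gives the bound. For the lower bound I would build a Cantor set $\mathcal K\subseteq B_\gamma$ generation by generation: inside each ball of generation $k$, use the counting together with a ubiquity/separation statement for $\mathcal C$ at a scale $q\approx Q_k\to\infty$ (coming from effective equidistribution of $\{a_t\}$, e.g.\ via the spectral gap) to select many pairwise well-separated tubes of the defining family, nest the balls of generation $k+1$ inside them, and arrange that generation $k$ already satisfies the defining condition for a parameter $C=C_k\to0$, so that $\mathcal K\subseteq\bigcap_C(\cdots)=B_\gamma$; then spread a probability measure $\mu$ uniformly over $\mathcal K$, estimate $\mu(B(x,\rho))\lesssim\rho^{s}$ with $s$ the claimed value by balancing the number of tubes against their separation, and apply the mass distribution principle. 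The main obstacle is this last step when $\mathfrak g_{2\alpha}\neq\emptyset$: one must control the distribution of $\Gamma$-cusp points in the non-abelian group $N^+$ --- both the counting and a ubiquity estimate at every scale --- while carrying the anisotropic Carnot--Carath\'eodory-versus-Euclidean geometry through the entire nested construction, and it is precisely there that the coefficients $\frac{4\alpha-\gamma}{4\alpha}$ and $\frac{2\alpha-\gamma}{2\alpha}$ get pinned down; this is considerably more delicate than the real hyperbolic case treated in \cite{MP}.
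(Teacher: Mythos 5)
Your plan is essentially the paper's own proof: reduce along $N_-$ and $\mathfrak g_0$ to a problem on the expanding horospherical group $N_+$ (the paper's Section 3), identify $S_\gamma^c$ there with a limsup set of anisotropic boxes (sides $\sim d(q)^{-\frac{2\alpha}{2\alpha-\gamma}}$ in $\mathfrak g_\alpha$ and $\sim d(q)^{-\frac{4\alpha}{2\alpha-\gamma}}$ in $\mathfrak g_{2\alpha}$) centered at rational points whose number at denominator $\leq T$ grows like $T^{\dim\mathfrak g_\alpha+2\dim\mathfrak g_{2\alpha}}$, proved by mixing (Sections 5--6), and then get the upper bound by subdividing the boxes into cubes and the lower bound by a nested Cantor/tree-like construction with a mass-distribution-type criterion (Section 7). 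The only deviations are cosmetic: the paper derives the separation of the boxes from a Bruhat-decomposition disjointness lemma rather than effective equidistribution, and runs the converse direction of the correspondence with an $\epsilon$-loss in the exponent (letting $\epsilon\to0$ at the end) instead of shrinking constants along the construction.
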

\begin{remark}
For $G=\SO(n,1)$, we have $\mathfrak g_{2\alpha}=0$ and Theorem \ref{cmaintheorem} is just a reformulation of Theorem 1 in \cite{MP}. It is also interesting to know what is the Hausdorff dimension of $S_\gamma^c\cap U$ for a semisimple flow on any homogeneous space. 
\end{remark}

Let $\xi_1,\dots,\xi_k$ be the inequivalent cusps of $G/\Gamma$. We fix sufficiently small neighborhoods $Y_i$ of $\xi_i$ in $G/\Gamma$ $(1\leq i\leq k)$ such that these $Y_i$'s are pairwise disjoint. We denote by $\chi_{Y_i}$ the characteristic function of $Y_i$.

\begin{definition}\label{def13}
A point $p\in G/\Gamma$ is Diophantine of type $(\gamma_1,\dots,\gamma_k)$ (with respect to $\{a_t\}$) if there exists a constant $C>0$ such that for any $i\in\{1,2,\dots,k\}$ and any $t>0$, we have $$\eta(a_tp)\chi_{Y_i}(a_tp)\geq Ce^{-\gamma_i t}\chi_{Y_i}(a_tp).$$ We denote by $S_{\gamma_1,\dots,\gamma_k}$ the subset of Diophantine points of type $(\gamma_1,\dots,\gamma_k)$, and by $S_{\gamma_1,\dots,\gamma_k}^c$ the complement of $S_{\gamma_1,\dots,\gamma_k}$ in $G/\Gamma$.
\end{definition}
\begin{remark}
This definition measures different excursion rates $\gamma_i$ of the orbit $\{a_tp\}$ near the cusps $\xi_i$ $(1\leq i\leq k)$.
\end{remark}

The following theorem is a refined version of Theorem \ref{cmaintheorem}. 

\begin{theorem}\label{ccor}
Let $G$, $\Gamma$ and $U$ be as in Theorem \ref{cmaintheorem}. If $\mathfrak g_{2\alpha}=0$, then the Hausdorff dimension of $S_{\gamma_1,\dots,\gamma_k}^c\cap U$ $(0\leq\gamma_i<\alpha,1\leq i\leq k)$ is equal to $$\dim\mathfrak g_{-\alpha}+\dim\mathfrak g_0+\frac{\alpha-\min_{1\leq i\leq k}\gamma_i}\alpha\dim\mathfrak g_\alpha.$$
If $\mathfrak g_{2\alpha}\neq0$, then the Hausdorff dimension of $S_{\gamma_1,\dots,\gamma_k}^c\cap U$ $(0\leq\gamma_i<2\alpha,1\leq i\leq k)$ is
\begin{align*}
\dim\mathfrak g_{-2\alpha}+&\dim\mathfrak g_{-\alpha}+\dim\mathfrak g_0\\
&+\frac{4\alpha-\min_{1\leq i\leq k}\gamma_i}{4\alpha}\dim\mathfrak g_\alpha+\frac{2\alpha-\min_{1\leq i\leq k}\gamma_i}{2\alpha}\dim\mathfrak g_{2\alpha}.
\end{align*}
\end{theorem}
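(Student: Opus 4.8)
The plan is to deduce Theorem \ref{ccor} from (the proof of) Theorem \ref{cmaintheorem} by a localization argument, noting that Diophantine conditions near different cusps are essentially independent and only the worst exponent matters. First I would establish the upper bound. Observe that
\[
S_{\gamma_1,\dots,\gamma_k}^c=\bigcup_{i=1}^k\Bigl\{p: \text{for every }C>0\text{ there is }t>0\text{ with }a_tp\in Y_i\text{ and }\eta(a_tp)<Ce^{-\gamma_i t}\Bigr\},
\]
so it suffices to bound the dimension of each of the $k$ sets in the union. For the $i$-th set, a point $p$ in it has its orbit returning to the fixed cusp neighborhood $Y_i$ infinitely often with excursion deeper than $e^{-\gamma_i t}$; by the horospherical coordinates adapted to the cusp $\xi_i$ (the same coordinates used in the proof of Theorem \ref{cmaintheorem}, built from $\mathfrak g_{-2\alpha}\oplus\mathfrak g_{-\alpha}\oplus\mathfrak g_0\oplus\mathfrak g_\alpha\oplus\mathfrak g_{2\alpha}$ and the identification of $\eta$ with a shortest-vector function on the corresponding unipotent lattice), this set is covered by the same Cantor-like construction that governs $S_{\gamma_i}^c$, giving the dimension bound with $\gamma_i$ in place of $\gamma$. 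Taking the maximum over $i$ — which, since the formula is decreasing in the exponent, corresponds to $\min_{1\le i\le k}\gamma_i$ — yields
\[
\dim_H\bigl(S_{\gamma_1,\dots,\gamma_k}^c\cap U\bigr)\le \dim\mathfrak g_{-2\alpha}+\dim\mathfrak g_{-\alpha}+\dim\mathfrak g_0+\frac{4\alpha-\min_i\gamma_i}{4\alpha}\dim\mathfrak g_\alpha+\frac{2\alpha-\min_i\gamma_i}{2\alpha}\dim\mathfrak g_{2\alpha},
\]
and likewise in the $\mathfrak g_{2\alpha}=\emptyset$ case.

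For the lower bound, let $i_0$ be an index achieving $\min_i\gamma_i=\gamma_{i_0}$. The idea is to reuse the explicit Cantor set $\mathcal C\subset U$ constructed in the proof of Theorem \ref{cmaintheorem} that realizes the full dimension for $S_{\gamma_{i_0}}^c$, but to perform the construction so that all the prescribed deep excursions happen through the single cusp neighborhood $Y_{i_0}$. Concretely, because the flow $\{a_t\}$ is mixing and $Y_{i_0}$ has nonempty interior, one can arrange the nested family of boxes defining $\mathcal C$ so that at each stage the "return time" used to force $\eta(a_tp)<Ce^{-\gamma_{i_0}t}$ occurs at a moment when $a_tp\in Y_{i_0}$; this only requires waiting an additional bounded amount of time between stages and so does not affect the Hausdorff dimension of the resulting limit set. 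Every $p\in\mathcal C$ then violates the $(\gamma_1,\dots,\gamma_k)$-Diophantine condition already at the cusp $\xi_{i_0}$, hence $\mathcal C\subset S_{\gamma_1,\dots,\gamma_k}^c\cap U$, and $\dim_H\mathcal C$ equals the asserted value.

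The main obstacle is the lower bound bookkeeping: one must check that inserting the extra waiting times to steer the orbit into $Y_{i_0}$ at the right moments is compatible with the self-similar mass distribution on $\mathcal C$, i.e. that the Frostman/mass-distribution estimates from the proof of Theorem \ref{cmaintheorem} survive the perturbation with the same exponent. Since the waiting times are uniformly bounded, the contraction ratios between consecutive stages change only by a bounded multiplicative factor, so the dimension computation is unaffected; making this precise is routine once the construction in Theorem \ref{cmaintheorem} is in hand. A secondary point is to confirm that in the upper bound the events "$a_tp\in Y_i$ with deep excursion" are genuinely controlled by the cusp-$\xi_i$ coordinates alone — this follows because the $Y_i$ are pairwise disjoint and each $Y_i$ is a fixed cusp neighborhood, so $\eta(a_tp)$ restricted to $Y_i$ is comparable to the shortest-vector function in the unipotent subgroup attached to $\xi_i$, exactly as in Theorem \ref{cmaintheorem}.
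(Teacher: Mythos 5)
Your overall reduction coincides with the paper's: writing $S_{\gamma_1,\dots,\gamma_k}=\bigcap_{i}S(i,\gamma_i)$ (equivalently, your union decomposition of the complement), using that the dimension of a finite union is the maximum of the dimensions, and that the single-cusp formula is decreasing in the exponent, so only $\min_i\gamma_i$ survives. Your upper bound is correct and essentially the paper's: since $S_{\gamma_i}\subset S(i,\gamma_i)$, the $i$-th piece of your union is contained in $S_{\gamma_i}^c$, and the cover by boxes around rational points from the proof of Theorem \ref{cmaintheorem} applies verbatim with exponent $\gamma_i$.

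The lower bound, however, has a genuine gap. In the construction underlying Theorem \ref{ctheorem}, the deep excursion of a point $p$ of the Cantor set at stage $j$ is produced by its proximity to a rational point $q$ of denominator about $l_{j+1}$, and the cusp that $a_tp$ visits during that excursion is determined by which $\sigma\in\Sigma$ the approximant $q$ belongs to (i.e.\ whether $q$ is $\sigma_{i_0}$-rational), not by a choice of time. One cannot steer a deep excursion from one cusp into another by ``waiting an additional bounded amount of time'': once $a_tp$ is at depth $\eta(a_tp)\sim e^{-\gamma_{i_0}t}$ in the cusp neighborhood attached to some $\sigma\neq\sigma_{i_0}$, reaching $Y_{i_0}$ requires first returning to a fixed compact part, which takes time growing linearly in the depth, and by then the excursion is over; nor does mixing, which is a statement about measures, act on individual orbits in the way you invoke it. What is actually needed is to run the entire construction cusp-specifically: at every stage use only $\sigma_{i_0}$-rational points, which requires the counting of $\sigma_{i_0}$-rational points with denominator between $l$ and $2l$ in each box (this is exactly what Propositions \ref{cp53} and \ref{cp54} provide -- they are stated per $\sigma$, and this is where mixing genuinely enters), the disjointness statements of Propositions \ref{cp71} and \ref{cp72} for such points, and a verification that proximity to a $\sigma_{i_0}$-rational point of large denominator forces $a_tp\in Y_{i_0}$ at the relevant time, i.e.\ cusp-specific analogues of Propositions \ref{cp64} and \ref{cp67}. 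This is how the paper argues (Theorem \ref{ccortheorem} together with the check at the end of Section 8 that a deep excursion into $Y_i$ produces a $\sigma_i$-rational approximant). With those cuspwise ingredients your outline goes through, but as written the steering step is not justified.
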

\begin{remark}
 Theorem \ref{ccor} is a generalized version of Theorem 7.1 in \cite{Z}.
\end{remark}

We also deduce a Jarn\'ik-Besicovitch theorem on Diophantine approximation in Heisenberg groups, which follows from a reduced form of Theorem \ref{cmaintheorem} (see Theorem \ref{ctheorem} below). We will work in the setting of \cite{HP3,LV}.

The real Heisenberg group $\mathcal H_{2n-1}(\mathbb R)$ is the subset in $\mathbb C^n$ $$\mathcal H_{2n-1}(\mathbb R):=\{(\zeta,v)\in\mathbb C^{n-1}\times\mathbb C: 2\Re(v)=|\zeta|^2\}$$ with the group multiplication given by $$(\zeta,v)(\zeta',v')=\left(\zeta+\zeta',v+v'+\overline{\zeta}\cdot\zeta'\right).$$ We write $d_{\mathcal H_{2n-1}(\mathbb R)}$ for the left-invariant distance on $\mathcal H_{2n-1}(\mathbb R)$. The subgroups of $\mathbb Q$-points and integer points in $\mathcal H_{2n-1}(\mathbb R)$ are defined respectively as $$\mathcal H_{2n-1}(\mathbb Q)=\mathcal H_{2n-1}(\mathbb R)\cap\mathbb Q[i]^n,\quad\mathcal H_{2n-1}(\mathbb Z)=H_{2n-1}(\mathbb R)\cap\mathbb Z[i]^{n}.$$

 Define $|\cdot|:\mathcal H_{2n-1}(\mathbb R)\to\mathbb R$ by $$|(\zeta,v)|=|v|^\frac12$$ and the Cygan distance on $\mathcal H_{2n-1}(\mathbb R)$ is then given by $$d_{\text{Cyg}}((\zeta,v),(\zeta'v'))=|(\zeta,v)^{-1}(\zeta',v')|.$$
Note that this distance is invariant under the left multilplication of $\mathcal H_{2n-1}(\mathbb R)$.

One can write any rational point $r\in\mathcal H_{2n-1}(\mathbb Q)$ as $$r=p/q,$$ where $p\in\mathcal H_{2n-1}(\mathbb Z)$ and $q\in\mathbb Z[i]$. The height $h(r)$ of $r\in\mathcal H_{2n-1}(\mathbb Q)$ is then defined to be the minimal norm of $q$ among all the possible choices of $r=p/q$.

\begin{definition}\label{def14}
A point $\lambda\in\mathcal H_{2n-1}(\mathbb R)$ is Diophantine of type $\gamma$ if there exists a constant $C>0$ such that $$d_{\text{Cyg}}(\lambda,r)\geq \frac C{(h(r))^\gamma}$$ for any $r\in\mathcal H_{2n-1}(\mathbb Q)$. We will denote by $L_\gamma$ the subset of all Diophantine points of type $\gamma$ in $\mathcal H_{2n-1}(\mathbb R)$ and by $L_\gamma^c$ the complement of $L_\gamma$ in $\mathcal H_{2n-1}(\mathbb R)$.
\end{definition}
\begin{remark}
By Theorem 3.4 in \cite{HP3}, we have $\dim_H L_\gamma^c=2n-1$ $(\gamma\leq1)$.
\end{remark}

\begin{theorem}\label{chptheorem}
The Hausdorff dimension of $L_\gamma^c$ $(\gamma\geq1)$ with respect to the left-invariant distance $d_{\mathcal H_{2n-1}(\mathbb R)}$ is equal to $\frac{\gamma+1}\gamma n-1.$
\end{theorem}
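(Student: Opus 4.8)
The plan is to deduce Theorem~\ref{chptheorem} from Theorem~\ref{cmaintheorem} by transferring the Diophantine condition in $\mathcal H_{2n-1}(\mathbb R)$ into an excursion condition in a suitable rank-one homogeneous space. The relevant group is $G=\SU(n,1)$ (or $\mathrm{PU}(n,1)$), whose associated symmetric space is complex hyperbolic space $\mathbf H^n_{\mathbb C}$, with a chosen parabolic fixed point whose stabilizer's unipotent radial part is precisely $\mathcal H_{2n-1}(\mathbb R)$; the lattice $\Gamma$ is the Picard-type group $\SU(n,1;\mathbb Z[i])$ (the setting of \cite{HP3}), which is non-uniform with a cusp $\xi$ corresponding to $\infty$. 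Here the root-space data are $\dim\mathfrak g_{\alpha}=2(n-1)$, $\dim\mathfrak g_{2\alpha}=1$, and $\dim\mathfrak g_0=1$ (the centralizer of $\mathfrak a$ in the direction transverse to $\mathfrak a$ itself), while $\dim\mathfrak g_{-\alpha}=2(n-1)$ and $\dim\mathfrak g_{-2\alpha}=1$; the horospherical subgroup $N^{-}$ (or $N^{+}$) is identified with $\mathcal H_{2n-1}(\mathbb R)$, and the flow $a_t$ is the geodesic flow toward $\xi$, which contracts $N$ and whose restriction to $\mathcal H_{2n-1}(\mathbb R)$ scales the $\mathbb C^{n-1}$-coordinate by $e^{-\alpha t}$ and the central $\mathbb R$-coordinate by $e^{-2\alpha t}$, exactly matching the homogeneity of the Cygan gauge $|(\zeta,v)|=(|\zeta|^4+v^2)^{1/4}$.

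The first main step is the \emph{Dani-type correspondence}: for $\lambda\in\mathcal H_{2n-1}(\mathbb R)$, embed it as $p_\lambda=n(\lambda)\Gamma\in G/\Gamma$ where $n(\lambda)$ is the corresponding unipotent element, and show that the orbit $\{a_t p_\lambda\}$ enters a fixed small cusp neighborhood $Y$ of $\xi$ at some time $t$ with injectivity radius $\eta(a_t p_\lambda)$ comparable to $e^{-\alpha t}$ precisely when $\lambda$ is Cygan-close to a rational point $r\in\mathcal H_{2n-1}(\mathbb Q)$ of height roughly $e^{\alpha t}$; quantitatively one should get a two-sided estimate of the shape $\eta(a_t p_\lambda)\asymp e^{-\alpha t}h(r)\,d_{\mathrm{Cyg}}(\lambda,r)$ whenever the orbit is deep in the cusp, with the best rational approximant $r$ at scale $e^{\alpha t}$ being the relevant one (the scaling exponent is fixed by the $2\alpha$-weight of the central coordinate and the $\alpha$-weight of $|\cdot|$ under $a_t$). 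Consequently $\lambda\in L_\gamma^c$, i.e. $d_{\mathrm{Cyg}}(\lambda,r)<C h(r)^{-\gamma}$ for a sequence of $r$ with $h(r)\to\infty$, translates into $\eta(a_t p_\lambda)< C' e^{-(\gamma-1)\alpha t/\gamma}$ for a sequence $t\to\infty$ — after calibrating $h(r)=e^{\alpha t}$ so that $h(r)\cdot h(r)^{-\gamma}=e^{\alpha t}\cdot e^{-\gamma\alpha t}=e^{-(\gamma-1)\alpha t}$ and then rescaling $t$ by the factor $\gamma/1$ coming from the freedom in the choice of approximation time; in other words $p_\lambda\in S_{\gamma'}^c$ with the excursion exponent $\gamma'$ related to $\gamma$ by $\gamma'=\tfrac{\gamma-1}{\gamma}\cdot(\text{const})\,\alpha$. (I would be careful to pin down this algebra so that plugging $\gamma'$ into Theorem~\ref{cmaintheorem} gives the claimed answer.)

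The second step is the \emph{dimension bookkeeping}. Theorem~\ref{cmaintheorem} in the case $\mathfrak g_{2\alpha}\neq\emptyset$ gives
$$
\dim_H(S_{\gamma'}^c\cap U)=\dim\mathfrak g_{-2\alpha}+\dim\mathfrak g_{-\alpha}+\dim\mathfrak g_0+\frac{4\alpha-\gamma'}{4\alpha}\dim\mathfrak g_\alpha+\frac{2\alpha-\gamma'}{2\alpha}\dim\mathfrak g_{2\alpha}.
$$
The slice $L_\gamma^c\subset\mathcal H_{2n-1}(\mathbb R)$ corresponds to the piece of $S_{\gamma'}^c$ lying in a single unstable horosphere, so the contributions from $\mathfrak g_{-2\alpha}\oplus\mathfrak g_{-\alpha}\oplus\mathfrak g_0$ (total dimension $2(n-1)+1+1=2n$, which are the "flow plus stable" directions together with the time direction) are stripped off, leaving only the $\mathfrak g_\alpha\oplus\mathfrak g_{2\alpha}$ contribution, namely $\frac{4\alpha-\gamma'}{4\alpha}\cdot 2(n-1)+\frac{2\alpha-\gamma'}{2\alpha}\cdot 1$. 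Substituting the relation between $\gamma'$ and $\gamma$ and simplifying should yield $\frac{\gamma+1}{\gamma}n-1$. The key sanity check is $\gamma=1$: then $L_1^c$ should be essentially everything (no $\lambda$ is badly approximable beyond the trivial rate), and indeed $\frac{2}{1}n-1=2n-1=\dim\mathcal H_{2n-1}(\mathbb R)$, confirming the normalization.

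The main obstacle, and the step deserving the most care, is the first one: making the Dani correspondence \emph{quantitatively} precise in the Heisenberg/complex-hyperbolic setting — in particular, (i) identifying the injectivity radius $\eta(a_t p_\lambda)$ inside a cusp neighborhood with the Cygan distance to the nearest rational (this requires unwinding the structure of parabolic subgroups of $\SU(n,1;\mathbb Z[i])$ and the precise form of the lattice points in the unipotent radical, which is where \cite{HP3} enters), (ii) controlling the passage between the right-invariant word metric $d_{\mathcal H_{2n-1}(\mathbb R)}$ on the Heisenberg group and the Cygan gauge $|\cdot|$ well enough that Hausdorff dimensions computed with one agree with those computed with the other (they are bi-Lipschitz-equivalent on bounded sets, so this is routine but must be stated), and (iii) checking that "Diophantine of type $\gamma$" in Definition~\ref{def14} matches "in the complement of $S_{\gamma'}$" including the quantifier over \emph{all} rationals versus a sequence — i.e. that the limsup sets on both sides coincide. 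Once the correspondence is set up cleanly, the dimension computation is a substitution; I would also remark that using the refined Theorem~\ref{ccor} with a single cusp gives nothing new here since there is (up to $\Gamma$-equivalence) only one cusp in the Picard-group case, so Theorem~\ref{cmaintheorem} suffices.
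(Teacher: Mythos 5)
Your overall strategy (transfer to $G=\SU(n,1)$, $\Gamma=\SU(n,1;\mathbb Z[i])$, geodesic flow, horospherical $N_+\cong\mathcal H_{2n-1}(\mathbb R)$) is the same as the paper's, but two of your steps have genuine gaps. First, the ``dimension bookkeeping'' is not a valid deduction: Theorem \ref{cmaintheorem} computes $\dim_H(S_{\gamma'}^c\cap U)$ for an open $U\subset G/\Gamma$, and you cannot obtain the dimension of the slice $L_\gamma^c$, which lives in a single $N_+$-orbit, by ``stripping off'' the transverse directions $\mathfrak g_{-2\alpha}\oplus\mathfrak g_{-\alpha}\oplus\mathfrak g_0$ --- there is no Fubini-type principle for Hausdorff dimension in that direction. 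The paper goes the other way: the slice statement (Theorem \ref{ctheorem}) is the primitive result, proved by the covering and tree-like constructions inside $N_+$, and Theorem \ref{cmaintheorem} is obtained from it by adding the transverse dimensions. Accordingly, the paper's Section 9 does not substitute into Theorem \ref{cmaintheorem}; it reruns both bounds inside $\mathcal H_{2n-1}(\mathbb R)$: the upper bound via a cover by Cygan balls $B_{\mathrm{Cyg}}(r,h(r)^{-\gamma})$ around Heisenberg rationals, subdivided into Riemannian cubes of side $h(r)^{-2\gamma}$ and counted using Theorem 3.7 of \cite{HP3}; the lower bound via Proposition \ref{chp93} and the same tree-like construction as in the $\mathfrak g_{2\alpha}\neq\emptyset$ case. (Incidentally, your transverse count is also wrong: for $\SU(n,1)$ one has $\dim\mathfrak g_0=(n-1)^2+1$, not $1$, though this does not affect the $\mathfrak g_\alpha\oplus\mathfrak g_{2\alpha}$ part.)

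Second, your claim in (ii) that $d_{\mathcal H_{2n-1}(\mathbb R)}$ and the Cygan gauge are bi-Lipschitz on bounded sets is false: in the central direction the Cygan distance scales like the square root of the Riemannian one, and the two metrics give different dimensions even for the whole group ($2n$ versus $2n-1$). This anisotropy is precisely the nontrivial point; the paper handles it by treating a Cygan ball as a box with side $h^{-\gamma}$ in the $2(n-1)$ horizontal directions and $h^{-2\gamma}$ in the central direction and then covering by cubes, exactly as in the $\mathfrak g_{2\alpha}\neq\emptyset$ case of Theorem \ref{ctheorem}. Finally, the heart of the matter --- your step (i) --- is left as ``one should get'' a two-sided estimate with an undetermined constant in the relation between $\gamma$ and $\gamma'$; this is what the paper actually proves in Propositions \ref{chp91}--\ref{chp93} (rational orbits correspond to rational geodesic lines, $h(g)\sim d(p)$ via the depth function of \cite{HP3}, and Cygan-approximation at rate $h^{-\gamma}$ corresponds to membership in boxes $B_{N_+}(Cd(q)^{-\gamma},Cd(q)^{-2\gamma})q$, i.e.\ $\gamma'=2(\gamma-1)/\gamma$ with $\alpha=1$). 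Until that correspondence and the metric comparison are established, the proposal does not yet yield the stated formula.
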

\begin{remark}
Hersonsky and Paulin \cite{HP3,HP4} give a Khintchine-Sullivan type theorem on Diophantine Approximation in Heisenberg groups with respect to the Cygan distance $d_{\text{Cyg}}$ and its induced Cygan measure $\mu_{\text{Cyg}}$. Here Theorem \ref{chptheorem} can be thought of as a Jarn\'ik-Besicovitch theorem in this setting, but with respect to the distance $d_{\mathcal H_{2n-1}(\mathbb R)}$.
\end{remark}

The paper is organized as follows. In section \ref{pre}, we list some notations and prerequisites in this paper, and then in section \ref{re}, we reduce Theorem \ref{cmaintheorem} to Theorem \ref{ctheorem}. In section \ref{le}, some auxiliary results in Lie groups are proved, which are important for our analysis in later sections. In section \ref{crp}, we give a definition of rational points in $G/\Gamma$ and define the denominator of a rational point. With the help of the mixing property of $\{a_t\}$, we will be able to count the rational points with the denominators lying in a range of large scale. This counting result will be used to calculate the Hausdorff dimension of a Cantor type subset. In section \ref{dp}, we study the meaning of a point in $G/\Gamma$ being Diophantine, and show that a Diphantine point could be approximated by rational points, which is similar to the Diophantine approximation in $\mathbb R$. The proofs of Theorem \ref{ctheorem}, Theorem \ref{ccor} and Theorem \ref{chptheorem} will be given in section \ref{pf1}, \ref{pf2} and \ref{pf3}.\\ 

\noindent\textbf{Acknowledgements.} I would like to express my gratitude to my advisor Nimish Shah for his encouragement and invaluable help during my PhD study. Many discussions with him contribute significantly to this paper. I am also grateful to Dmitry Kleinbock and Uri Shapira for insightful comments on related topics, and thank the reviewers whose suggestions improve the paper considerably. The author acknowledges the support of ISF grants number 662/15 and 871/17, and the support at the Technion by a Fine Fellowship. This work has also received funding from the European Research Council (ERC) under the European Union's Horizon 2020 research and innovation programme (grant agreement No. 754475).

\section{Notation and prerequisites}\label{pre}
We say that $B_1\ll B_2$ if there exists a constant $C>0$ such that $B_1\leq CB_2$. If $B_1\ll B_2$ and $B_2\ll B_1$, then we write $B_1\sim B_2$. The implicit constants will be specified in the context.

Let $\text{exp}$ be the exponential map from $\mathfrak g$ to $G$. For any Lie subgroup $H\subseteq G$, denote by $\Lie(H)$ the Lie algebra of $H$ and $\mu_H$ the Haar measure on $H$.

 We write $\Ad$ for the adjoint representation of $G$ on $\mathfrak g$. For convenience, we will also use $\Ad$ for the conjugation of an element $g$ on $G$ $$\Ad(g)x=gxg^{-1},\quad x\in G.$$ In other words, the meaning of $\Ad(g)x$ will depend on $x$: if $x\in\mathfrak g$, then we treat $\Ad$ as the adjoint action of $G$ on $\mathfrak g$; if $x\in G$, then we think of $\Ad$ as the conjugation. As usual, $\ad$ will denote the adjoint representation of the Lie algebra $\mathfrak g$ on $\mathfrak g$.

Let $$\mathfrak n_+=\mathfrak g_\alpha\oplus\mathfrak g_{2\alpha},\quad\mathfrak n_-=\mathfrak g_{-\alpha}\oplus\mathfrak g_{-2\alpha}$$ and let $N_+, N_-$ be the corresponding unipotent subgroups. The exponential map restricted on $\mathfrak n_-$ $$\exp:\mathfrak n_-\to N_-$$ is a diffeomorphism, and for convenience, we will denote its inverse by $$\log: N_-\to\mathfrak n_-.$$ Write $A=\{a_t\}$ and $$A_{s_1,s_2}=\{a_t\in A: s_1\leq t\leq s_2\}.$$ We fix a basis $\{f_1,f_2.\dots\}$ in the vector space $\mathfrak g_\alpha$ and write $$\mathfrak B_{\mathfrak g_\alpha}(r)=\left\{\sum_i x_if_i\in\mathfrak g_\alpha: |x_i|<r\right\}$$ for the open cube around 0 of side length $2r$ in $\mathfrak g_\alpha$. Similarly we could define the open cube $\mathfrak B_{\mathfrak g_{2\alpha}}(r)$ centered at 0 of side length $2r$ in $\mathfrak g_{2\alpha}$.

If $\mathfrak g_{2\alpha}=0$, then $\mathfrak n_+=\mathfrak g_{\alpha}$ and $\mathfrak n_-=\mathfrak g_{-\alpha}$. We will denote by $$B_{N_+}(r)=\exp(\mathfrak B_{\mathfrak g_\alpha}(r))$$ the open cube centered at $e$ with side length $2r$ in $N_+$. If $\mathfrak g_{2\alpha}\neq0$, then $\mathfrak n_+=\mathfrak g_\alpha\oplus\mathfrak g_{2\alpha}$ and denote by $$B_{N_+}(r_1,r_2)=\exp(\mathfrak B_{\mathfrak g_\alpha}(r_1)+\mathfrak B_{\mathfrak g_{2\alpha}}(r_2))$$ for the open box centered at $e$ with side length $2r_1$ in $\mathfrak g_\alpha$-direction and $2r_2$ in $\mathfrak g_{2\alpha}$-direction. A subset in $N_+$ is called an open box if it is a right translate of $B_{N_+}(r)$ for some $r>0$ ($\mathfrak g_{2\alpha}=0$), or $B_{N_+}(r_1,r_2)$ for some $r_1,r_2>0$ ($\mathfrak g_{2\alpha}\neq0$).

The Bruhat decomposition in the real rank one case has the following simple form $$G=MAN_-\cup MAN_-\omega MAN_-$$ where $M$ is the centralizer of $\mathfrak a$ in $K$, $\omega$ is a representative of the non-trivial element in the Weyl group and $N_-=\exp(\mathfrak n_-)$.

We will need the following theorem about estimating Hausdorff dimensions. Let $X$ be a Riemannian manifold, $m$ a volume form and $E$ a compact subset of $X$. Let $\text{diam}(S)$ denote the diameter of a set $S$. A countable collection $\mathcal A$ of compact subsets of $E$ is said to be tree-like \cite{KM} if $\mathcal A$ is the union of finite subcollections $\mathcal A_j$ such that 
\begin{enumerate}
\item $\mathcal A_0=\{E\}$.
\item For any $j$ and $S_1,S_2\in\mathcal A_j$, either $S_1=S_2$ or $S_1\cap S_2=\emptyset$.
\item For any $j$ and $S_1\in\mathcal A_{j+1}$, there exists $S_2\in\mathcal A_j$ such that $S_1\subset S_2$.
\item $d_j(\mathcal A):=\sup_{S\in\mathcal A_j}\text{diam}(S)\to 0$ as $j\to\infty$.
\end{enumerate}

We write $\mathbf A_j=\bigcup_{A\in\mathcal A_j}A$ and define $\mathbf A_\infty=\bigcap_{j\in\mathbb N}\mathbf A_j.$ Moreover, we define $$\Delta_j(\mathcal A)=\inf_{S\in\mathcal A_j}\frac{m(\mathbf A_{j+1}\cap S)}{m(S)}.$$

\begin{theorem}[\cite{M, U}]\label{cthm1}
Let $(X,m)$ be a Riemannian manifold where $m$ is the volume form on $X$. Then for any tree-like collection $\mathcal A$ of subsets of $E$
$$\dim_H(\mathbf A_\infty)\geq \dim X-\limsup_{j\to\infty}\frac{\sum_{i=0}^j\log(\Delta_i(\mathcal A))}{\log(d_{j+1}(\mathcal A))}.$$
\end{theorem}

Note that the definition of tree-like subsets is due to \cite{KM} and here the formulation of Theorem \ref{cthm1} is also borrowed from \cite{KM}, while Theorem \ref{cthm1} is basically proved in the earlier papers \cite{M,U}.

The following theorem describes the fundamental domain of a non-uniform lattice in $G$.  We write the Siegel set $$\Omega(s,V)=KA_{s,\infty}V$$ for some $s\in\mathbb R$ and some compact subset $V\subset N_-$. 

\begin{theorem}[\cite{GR}]\label{cthm2}
There exist $s_0>0$, a compact subset $V_0$ of $N$ and a finite subset $\Sigma$ of $G$ such that the following assertions hold:
\begin{enumerate}
\item $G=\Omega(s_0,V_0)\Sigma\Gamma$.
\item For all $\sigma\in\Sigma$, $\Gamma\cap\sigma^{-1}N_-\sigma$ is a cocompact lattice in $\sigma^{-1}N_-\sigma$.
\item For all compact subsets $V$ of $N_-$ the set $$\{\gamma\in\Gamma|\Omega(s_0,V)\Sigma\gamma\cap\Omega(s_0,V)\neq\emptyset\}$$ is finite.
\item Given a compact subset $V$ of $N_-$ containing $V_0$, there exists $s_1\in(0,s_0)$ such that whenever $\sigma,\tau\in\Sigma$ are such that $\Omega(s_0,V)\sigma\gamma\cap\Omega(s_1,V)\tau$ is non-empty for some $\gamma$ then $\sigma=\tau$ and $\sigma\gamma\sigma^{-1}\in (K\cap Z)\cdot N_-\subset P$.
\end{enumerate}
Here $Z$ is the centralizer of $A=\{a_t\}$ and $P=ZN_-$.
\end{theorem}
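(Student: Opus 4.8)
The plan is to reproduce the classical reduction theory for lattices in semisimple Lie groups of real rank one, due to Garland--Raghunathan \cite{GR}, together with the refinement of Dani \cite{D} that yields the precise form of assertion (4). Since $\Gamma$ is non-uniform and $G$ has rank one, the symmetric space $X=G/K$ has finite-volume non-compact quotient $X/\Gamma$, and every proper parabolic subgroup of $G$ is minimal. The first step is to show, via the Margulis lemma together with the finite volume of $X/\Gamma$, that $\Gamma$ has only finitely many cusps $\xi_1,\dots,\xi_k$, i.e.\ finitely many $\Gamma$-conjugacy classes of stabilizers $\Stab_G(\xi_i)$, each a minimal parabolic subgroup. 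For each $i$ one fixes $\sigma_i\in G$ with $\sigma_i^{-1}P\sigma_i=\Stab_G(\xi_i)$ (recall $P=ZN_-$), so that $\sigma_i^{-1}N_-\sigma_i$ is the unipotent radical of $\Stab_G(\xi_i)$, and sets $\Sigma=\{\sigma_1,\dots,\sigma_k\}$. Assertion (2) --- that $\Gamma\cap\sigma_i^{-1}N_-\sigma_i$ is cocompact in $\sigma_i^{-1}N_-\sigma_i$ --- then follows: the horoball at $\xi_i$ has finite-volume $\Gamma$-quotient, so $\Gamma$ meets the (nilpotent) unipotent radical of $\Stab_G(\xi_i)$ in a subgroup of finite covolume, and a lattice in a nilpotent Lie group is automatically cocompact.

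For assertion (1) I would use the thick--thin decomposition of $X/\Gamma$. Taking for each $i$ a compact fundamental domain $V_i\subset N_-$ for the cocompact lattice obtained by conjugating $\Gamma\cap\sigma_i^{-1}N_-\sigma_i$ into $N_-$, the $\Gamma$-translates of the Siegel sets $\Omega(s_0,V_i)\sigma_i$ account for the cusp region of each $\xi_i$ beyond a fixed depth, for a suitable cutoff $s_0$. The complement of these cusp cores is $\Gamma$-cocompact in $X$, hence of the form $\Gamma\cdot C$ for a compact $C\subset G$; enlarging the $V_i$ to a common large compact $V_0\subset N_-$ and adjusting $s_0$ so that $\Omega(s_0,V_0)\Sigma$ also absorbs $C$ together with the bounded-depth portions of the cusps, one obtains $G=\Omega(s_0,V_0)\Sigma\Gamma$.

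The heart of the argument, and the step I expect to be the main obstacle, is the Siegel property: assertions (3) and (4). The key input is an estimate of Mahler/Harish-Chandra type, read off from the root-space description of $\Ad a_t$ above: since $N_-$ is precisely the horospherical subgroup contracted by $a_t$ as $t\to+\infty$, the conjugates $a_tN_-(V)a_{-t}$ shrink exponentially, so that any $\gamma\in\Gamma$ with $\Omega(s_0,V)\Sigma\gamma\cap\Omega(s_0,V)\neq\emptyset$ is confined, up to the finite set $\Sigma$, to a bounded subset of $G$ depending only on $V$ and $s_0$; combining this with the discreteness of $\Gamma$ and the cocompactness from (2) leaves only finitely many possibilities, which is assertion (3). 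For the sharper assertion (4) one shrinks the cutoff from $s_0$ to a suitable $s_1\in(0,s_0)$: if $\Omega(s_0,V)\sigma\gamma$ meets $\Omega(s_1,V)\tau$, both pieces lie so deep in their respective cusps that the overlap must occur inside a single cusp, forcing $\sigma=\tau$; and since an overlap at a comparable horospherical level cannot shift that level, the $A$-component of $\sigma\gamma\sigma^{-1}$ is forced to be trivial, so that $\sigma\gamma\sigma^{-1}\in(K\cap Z)\cdot N_-\subset P$. Making each of these ``forcing'' steps quantitative --- bounding the distance between points of $\Omega(s_0,V)$ in terms of their $A$- and $N_-$-coordinates, and invoking the Margulis lemma to guarantee precisely invariant horoballs --- is the technical crux, which I would carry out using standard distance estimates in rank-one symmetric spaces.
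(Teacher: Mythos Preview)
The paper does not supply its own proof of this theorem: it is quoted from the literature (Garland--Raghunathan \cite{GR} and Dani \cite{D}) and used as a black box in the preliminaries. Your outline is a faithful sketch of the classical argument in those references, so there is nothing to compare against.

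One small comment on your reading of assertion~(4): in the paper's convention $\Omega(s,V)=KA_{s,\infty}N_-(V)$, so decreasing $s$ \emph{enlarges} the Siegel set rather than pushing it deeper into the cusp. Thus the condition $s_1\in(0,s_0)$ means $\Omega(s_1,V)\supset\Omega(s_0,V)$, and the content of (4) is that if a $\Gamma$-translate of the \emph{smaller} Siegel set $\Omega(s_0,V)\sigma$ meets the \emph{larger} one $\Omega(s_1,V)\tau$, the cusps must agree and the translating element lies in $(K\cap Z)N_-$. Your phrase ``both pieces lie so deep in their respective cusps'' applies to the $\Omega(s_0,V)$ side only; the argument works because one side is deep, not both. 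This does not affect the validity of your sketch, but you should adjust the wording when you write it out.
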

\begin{remark}\label{r21}
The formulation of Theorem \ref{cthm2} here is borrowed from \cite{SGD}. The subset $\Sigma$ corresponds to the cusp set $\{\xi_1,\dots,\xi_k\}$ in Definition \ref{def13}.
\end{remark}

\section{Reductions}\label{re}
By the property of Hausdorff dimension, we may assume that $U=Wx\Gamma$ is a sufficiently small neighborhood of a point $x\Gamma\in G/\Gamma$ in Theorem \ref{cmaintheorem}, where $x\in G$ and $W$ is a sufficiently small neighborhood of $e$ in $G$. Furthermore, for any element $g$ in $W$, we can write $$g=n_-amn_+$$ for some $n_-\in N_-$, $a\in A$, $m\in M$ and $n_+\in N_+$. By definition, $gx\Gamma\in S_\gamma$ if and only if $n_+x\Gamma\in S_\gamma$. Hence to prove Theorem \ref{cmaintheorem}, it is enough to prove that for any small open ball $U_0$ at $e$ in $N_+$ we have $$\dim_H(S_\gamma^c\cap U_0(x\Gamma))=\left\{\begin{array}{cc} \frac{\alpha-\gamma}\alpha\dim\mathfrak g_\alpha & \text{if }\mathfrak g_{2\alpha}=0\\ \frac{4\alpha-\gamma}{4\alpha}\dim\mathfrak g_\alpha+\frac{2\alpha-\gamma}{2\alpha}\dim\mathfrak g_{2\alpha} & \text{if }\mathfrak g_{2\alpha}\neq0\end{array}\right..$$ Replacing the lattice $\Gamma$ by $x\Gamma x^{-1}$, we can assume without loss of generality that $x\Gamma=e\Gamma$, and then it suffices to prove the following

\begin{theorem}\label{ctheorem}
Let $U_0$ be a small open ball at $e$ in $N_+$. Then we have $$\dim_H(S_\gamma^c\cap U_0(e\Gamma))=\left\{\begin{array}{cc} \frac{\alpha-\gamma}\alpha\dim\mathfrak g_\alpha & \text{if }\mathfrak g_{2\alpha}=0\\ \frac{4\alpha-\gamma}{4\alpha}\dim\mathfrak g_\alpha+\frac{2\alpha-\gamma}{2\alpha}\dim\mathfrak g_{2\alpha} & \text{if }\mathfrak g_{2\alpha}\neq0\end{array}\right.$$
where $0\leq\gamma<\alpha$ if $\mathfrak g_{2\alpha}=0$ and $0\leq\gamma<2\alpha$ if $\mathfrak g_{2\alpha}\neq0.$
\end{theorem}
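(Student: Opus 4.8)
The plan is to establish the two inequalities $\dim_H(S_\gamma^c\cap U_0(e\Gamma))\leq D$ and $\dim_H(S_\gamma^c\cap U_0(e\Gamma))\geq D$ separately, where $D$ denotes the asserted value. For the upper bound, I would first unwind Definition \ref{def12}: a point $p=n_+e\Gamma$ fails to be Diophantine of type $\gamma$ precisely when for every $C>0$ there is some $t>0$ with $\eta(a_tp)<Ce^{-\gamma t}$, i.e. the orbit makes infinitely many deep excursions into a cusp at faster-than-$e^{-\gamma t}$ rate. Using Theorem \ref{cthm2} (the Siegel set / fundamental domain description) together with the Bruhat decomposition, such an excursion at time $t$ forces $n_+$ to lie in a specific ``shrinking'' set near a rational point (a $\Gamma$-translate of a cusp), whose diameter in the $\mathfrak g_\alpha$-direction is $\sim e^{-(\alpha+\gamma)t/2}$ and in the $\mathfrak g_{2\alpha}$-direction is $\sim e^{-(\alpha+\gamma)t}$ (when $\mathfrak g_{2\alpha}\neq\emptyset$; in the split case one just gets balls of radius $\sim e^{-(\alpha+\gamma)t/2}$). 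Thus $S_\gamma^c\cap U_0$ is a $\limsup$ set of such boxes, and a standard covering argument — bounding the number of rational points of denominator in a dyadic range via the counting result promised in section 5 (which comes from mixing of $\{a_t\}$) — gives the upper bound on Hausdorff dimension by computing the critical exponent of the resulting cover. The anisotropy of the boxes is what makes the exponent come out as a weighted sum rather than a single ratio.

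For the lower bound I would build a tree-like collection $\mathcal A$ inside $U_0(e\Gamma)$ in the sense preceding Theorem \ref{cthm1}, and then invoke that theorem with $k=\dim N_+=\dim\mathfrak g_\alpha+\dim\mathfrak g_{2\alpha}$ (using the local product structure so that the measure condition $m(B(x,r))\leq Dr^k$ holds on $N_+$). Concretely, at each level $j$ I would subdivide each surviving box into a grid of sub-boxes at a well-chosen scale, then inside each retain a sub-box that is guaranteed to force a deep-enough cusp excursion at an appropriate time $t_j\to\infty$ — again using Theorem \ref{cthm2} to locate, near each rational point with denominator in the right range, a box whose points have $\eta(a_{t_j}\cdot)$ below the $\gamma$-threshold. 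The counting of available rational points gives a lower bound on $\Delta_j(\mathcal A)$ (the proportion of each level-$j$ box covered by level $j+1$), and tracking $d_j(\mathcal A)$ (which shrinks geometrically with the anisotropic rates above) lets me evaluate $\limsup_j \frac{\sum_{i\le j}\log(1/\Delta_i)}{\log(1/d_{j+1})}$ and check it equals $k-D$.

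The main obstacle, and where the real work lies, is the case $\mathfrak g_{2\alpha}\neq\emptyset$: the sets cut out by a cusp excursion are genuinely anisotropic boxes (stretched differently in the $\mathfrak g_\alpha$ and $\mathfrak g_{2\alpha}$ directions because $\Ad a_t$ scales these by $e^{\alpha t}$ versus $e^{2\alpha t}$), so neither the mass transference principle \cite{BV} nor a naive ball-covering argument applies, and one must carry out the two-scale bookkeeping by hand in Theorem \ref{cthm1}. One has to choose the grid at each level so that the sub-boxes are themselves comparable to the excursion boxes in \emph{both} directions simultaneously, and verify that the group-theoretic input (Lemma-type results from section 4 controlling how $\Ad a_t$ and the Bruhat coordinates interact near a cusp) really does produce boxes of the claimed dimensions uniformly over the rational points being used. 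A secondary technical point is ensuring the rational-point count from section 5 is uniform enough — with explicit dependence on the denominator range — to make both the covering (upper) and the $\Delta_j$ (lower) estimates match to first order in the exponent.
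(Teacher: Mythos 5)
Your overall architecture --- describing $S_\gamma^c\cap U_0(e\Gamma)$ as a limsup of boxes centered at rational points, counting those points in dyadic denominator ranges via mixing, a covering argument for the upper bound, and a tree-like construction fed into Theorem \ref{cthm1} for the lower bound (with the anisotropic bookkeeping done by hand when $\mathfrak g_{2\alpha}\neq\emptyset$) --- is exactly the paper's. But the quantitative heart of the argument, the size of the boxes, is wrong in your proposal. If the deepest point of an excursion occurs at time $t$ with $\eta(a_tp)\approx e^{-\gamma t}$, the witnessing lattice element is conjugated into $\mathfrak n_-$ (resp.\ into $\exp(\mathfrak g_{-2\alpha})$) by a \emph{bounded} element $n\in N_+$ (this is where Propositions \ref{cp40}, \ref{cp41}, \ref{cp42}, \ref{cp43} and the bracket estimates are needed), and the rational point is $q=(a_{-t}na_t)p$; since conjugation by $a_{-t}$ contracts $N_+$ at the rates $e^{-\alpha t}$ and $e^{-2\alpha t}$, the box containing $p$ has diameter $\sim e^{-\alpha t}$ in the $\mathfrak g_\alpha$-direction and $\sim e^{-2\alpha t}$ in the $\mathfrak g_{2\alpha}$-direction, \emph{independently of $\gamma$}; the $\gamma$-dependence enters only through the relation between $t$ and the denominator, $d(q)\sim e^{(\alpha-\gamma)t}$ (resp.\ $d(q)\sim e^{(2\alpha-\gamma)t/2}$), which converts these into $B_{N_+}(Cd(q)^{-\frac{\alpha}{\alpha-\gamma}})$, resp.\ $B_{N_+}(Cd(q)^{-\frac{2\alpha}{2\alpha-\gamma}},Cd(q)^{-\frac{4\alpha}{2\alpha-\gamma}})$. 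Your claimed diameters $e^{-(\alpha+\gamma)t/2}$ and $e^{-(\alpha+\gamma)t}$ disagree with these for every $\gamma\neq\alpha$.

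This is not cosmetic: running your cover with radius $e^{-(\alpha+\gamma)t/2}$ against the count $\sim e^{(\alpha-\gamma)t\dim\mathfrak g_\alpha}$ of rational points of denominator $\sim e^{(\alpha-\gamma)t}$ gives the critical exponent $\frac{2(\alpha-\gamma)}{\alpha+\gamma}\dim\mathfrak g_\alpha$ in the split case instead of $\frac{\alpha-\gamma}{\alpha}\dim\mathfrak g_\alpha$ (at $\gamma=0$ it even exceeds $\dim\mathfrak g_\alpha$), so the upper bound is not achieved. Worse, for $0\leq\gamma<\alpha$ your boxes are strictly \emph{larger} than the correct ones, so in the tree construction the inclusion $\mathbf A_\infty\subset S_\gamma^c$ breaks down: a point at distance $e^{-(\alpha+\gamma)t/2}\gg e^{-\alpha t}$ from $q$ need not track the excursion of $q$ long enough to violate the Diophantine condition, whereas the sufficiency statements (Propositions \ref{cp64} and \ref{cp67}) require radius essentially $d(q)^{-\frac{\alpha}{\alpha-(\gamma+\epsilon)}}$, i.e.\ $e^{-\alpha t}$ up to the $\epsilon$-loss. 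You also omit the disjointness input (Propositions \ref{cp71} and \ref{cp72}, proved via Proposition \ref{cp40}) that makes the counted boxes usable as children in the tree, but the decisive fix is to redo the excursion-to-box computation so that the exponents $e^{-\alpha t}$ and $e^{-2\alpha t}$ come out, after which the rest of your plan does match the paper's proof.
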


Note that for any $\gamma>0$ we have $$S_\gamma^c\cap U_0(e\Gamma)\subset S_0^c\cap U_0(e\Gamma)\subset U_0(e\Gamma),$$ and hence $$\dim_H(S_\gamma^c\cap U_0(e\Gamma))\leq\dim_H(S_0^c\cap U_0(e\Gamma))\leq\dim_H N_+.$$ Because of this, it is enough to consider $\gamma>0$. So we would assume $\gamma>0$ unless otherwise specified.

In the rest of the paper, we will fix the open ball $U_0\subset N_+$ in Theorem \ref{ctheorem}, and study Diophantine points in the space $U_0(e\Gamma)$ instead of $G/\Gamma$. Since $U_0$ is isomorphic to $U_0(e\Gamma)$, we can still write $\mu_{N_+}$ for the $N_+$-invariant measure on $U_0(e\Gamma)$, i.e. $$\mu_{N_+}(B(e\Gamma))=\mu_{N_+}(B)$$ for any Borel subset $B\subset U_0\subset N_+$, and we will use the notations in $U_0$ and $U_0(e\Gamma)$ interchangeably.

\section{Some auxiliary results in Lie groups}\label{le}
In this section, we will prove some Lie group results which will be used later in this note.

\begin{lemma}\label{cp40}
Let $g\in G$ such that $\Ad g(N_-)\cap N_-\neq\{e\}$. Then $g\in MAN_-$. 
\end{lemma}
\begin{proof}
Suppose that $g\notin MAN_-$. By the Bruhat decomposition, $$g\in MAN_-\omega MAN_-.$$ Let $g=g_1\omega g_2$ for some $g_1,g_2\in MAN_-$. Since $MAN_-$ is contained in the normalizer of $N_-$, one could compute
\begin{align*}
\Ad g(N_-)\cap N_-=&\Ad g_1(\Ad(\omega g_2) N_-\cap N_-)\\
=&\Ad g_1(\Ad\omega(N_-)\cap N_-)\\
=&\Ad g_1(N_+\cap N_-)=\{e\},
\end{align*}
which contradicts the assumption that $\Ad g(N_-)\cap N_-\neq\{e\}$.
\end{proof}

\begin{lemma}\label{cp41}
For any $u\in\mathfrak n_-\setminus\{0\}$ and $v\in\mathfrak n_+\setminus\{0\}$ we have $$\|[u,v]\|_{\mathfrak g}\sim\|u\|_{\mathfrak g}\|v\|_{\mathfrak g}.$$ If $u\in\mathfrak g_{-\alpha}$ and $v\in\mathfrak g_{\alpha}$, then $$\|[v,[v,u]]\|_{\mathfrak g}\sim\|v\|_{\mathfrak g}^2\|u\|_{\mathfrak g}.$$ Here the implicit constants depend only on $G$.
\end{lemma}
\begin{proof}
For the first claim, it suffices to prove that for any $u\in\mathfrak n_-\setminus\{0\}$ and $v\in\mathfrak n_+\setminus\{0\}$ $$[u,v]\neq0.$$ Suppose on the contrary that $[u,v]=0$ for some $u\in\mathfrak n_-\setminus\{0\}$ and $v\in\mathfrak n_+\setminus\{0\}$. Then $\Ad(\exp(v))u=u$ and $$\Ad(\exp(v))N_-\cap N_-\neq\{e\},$$ which contradicts Lemma \ref{cp40}.

For the second claim, it is enough to show that for $u\in\mathfrak g_{-\alpha}\setminus\{0\}$ and $v\in\mathfrak g_\alpha\setminus\{0\}$ $$[v,[v,u]]\neq 0.$$ By Lemma 7.73 (b) in \cite{K}, we know that $\ad(v)^2:\mathfrak g_{-\alpha}\to\mathfrak g_{\alpha}$ is surjective, and hence bijective. So $$[v,[v,u]]=\ad(v)^2(u)\neq0.$$ This completes the proof of the lemma.
\end{proof}

\begin{lemma}\label{cp42}
Let $u\in G$ be a unipotent element. Then there exists a unique element $n$ in $N_+\cup\{w\}$ such that $$\Ad n(u)\in N_-.$$ Moreover, if $u\notin N_+$, then this $n\in N_+$.
\end{lemma}
\begin{proof}
We know that there is an element $g\in G$ such that $$\Ad g(u)\in N_-.$$ By the Bruhat decomposition, $g$ is either $ma\bar n\omega$ or $ma\bar nn$ for some $m\in M, a\in A,n\in N_+$ and $\bar n\in N_-$. Since $ma\bar n$ normalizes $N_-$, we have either $$\Ad\omega(u)\in N_-\textup{ or }\Ad n(u)\in N_-.$$ This proves the existence.

Suppose that there are two elements $n_1,n_2\in N_+\cup\{w\}$ such that $$\Ad n_i(u)\in N_-,\; i=1,2.$$ Then $$\Ad(n_2n_1^{-1})N_-\cap N_-\neq\{e\}.$$ By Lemma \ref{cp40}, this implies that $n_2n_1^{-1}\in MAN_-$. Hence by the Bruhat decomposition, $n_2n_1^{-1}=e$ and $n_1=n_2$. This proves the uniqueness.

 The second part of the lemma follows immediately from the first.
\end{proof}

\begin{lemma}\label{nl41}
Suppose that $\mathfrak g_{-2\alpha}\neq0$. Let $g,u\in G$ such that $$\Ad g(u)\in\exp(\mathfrak g_{-2\alpha}).$$ Then there exists an element $n$ in $N_+\cup\{w\}$ such that $$\Ad n(u)\in\exp(\mathfrak g_{-2\alpha}).$$
\end{lemma}
\begin{proof}
The proof is similar to Lemma \ref{cp42}. By the Bruhat decomposition, $g$ is either $ma\bar n\omega$ or $ma\bar nn$ for some $m\in M, a\in A,n\in N_+$ and $\bar n\in N_-$. Since $ma\bar n$ normalizes $\exp(\mathfrak g_{-2\alpha})$, we conclude that either $$\Ad\omega(u)\in\exp(\mathfrak g_{-2\alpha})\textup{ or }\Ad n(u)\in\exp(\mathfrak g_{2\alpha})$$ for some $n\in N_+$.
\end{proof}

\begin{lemma}\label{cp43}
Let $\Sigma$ be as in Theorem \ref{cthm2}. Suppose that $\mathfrak g_{-2\alpha}\neq0$. Then for any $\sigma\in\Sigma$, $\sigma\Gamma\sigma^{-1}\cap\exp(\mathfrak g_{-2\alpha})$ is a lattice in $\exp(\mathfrak g_{-2\alpha})$.
\end{lemma}
\begin{proof}
Let $u\in\mathfrak g_{-\alpha}\setminus\{0\}$. By Lemma 7.73 (a) in \cite{K}, we know that the map $$\ad(u):\mathfrak g_{-\alpha}\to\mathfrak g_{-2\alpha}$$ is surjective. Hence we have $$[\mathfrak g_{-\alpha},\mathfrak g_{-\alpha}]=\mathfrak g_{-2\alpha}.$$ This implies that $$[N_-,N_-]=\exp(\mathfrak g_{-2\alpha}).$$ On the other hand, since $\sigma\Gamma\sigma^{-1}\cap N_-$ is a lattice in $N_-$, by Corollary 1 of Theorem 2.3 in \cite{R}, we know that $\sigma\Gamma\sigma^{-1}\cap[N_-,N_-]$ is a lattice in $[N_-,N_-]$. This completes the proof of the lemma.
\end{proof}

\begin{lemma}\label{nl42}
Suppose $\mathfrak g_{-2\alpha}\neq 0$. Let $\Sigma$ be as in Theorem \ref{cthm2}. Then there exists $C>0$ depending only on $G$ and $\Gamma$ such that for any $\sigma\in\Sigma$ and any $n\in(\sigma\Gamma\sigma^{-1}\cap N_-)\setminus\exp(\mathfrak g_{-2\alpha})$ we have $$\|n_1\|_{\mathfrak g}\geq C$$ where $n=\exp(n_1+n_2)$ for $n_1\in\mathfrak g_{-\alpha}$ and $n_2\in\mathfrak g_{-2\alpha}$.
\end{lemma}
\begin{proof}
Let $\varpi$ denote the natural group homomorphism  $$\varpi:N_-\to N_-/\exp(\mathfrak g_{-2\alpha}).$$ Note that $\exp(\mathfrak g_{-2\alpha})$ is the center of $N_-$.

By Lemma \ref{cp43}, $\sigma\Gamma\sigma^{-1}\cap\exp(\mathfrak g_{-2\alpha})$ is a lattice in $\exp(\mathfrak g_{-2\alpha})$. Hence by Theorem 1.13 in \cite{R}, we get that $(\sigma\Gamma\sigma^{-1}\cap N_-)\exp(\mathfrak g_{-2\alpha})$ is closed in $N_-$, and $\varpi(\sigma\Gamma\sigma^{-1}\cap N_-)$ is a lattice in $N_-/\exp(\mathfrak g_{-2\alpha})$. 

Note that $N_-/\exp(\mathfrak g_{-2\alpha})$ could be identified with the vector space $\mathfrak g_{-\alpha}$, and the map $\varpi$ could be defined as $$\varpi(n)=n_1$$ for any $n=\exp(n_1+n_2)$ with $n_1\in\mathfrak g_{-\alpha}$ and $n_2\in\mathfrak g_{-2\alpha}$. (One could check that this definition of $\varpi$ preserves the group law by Hausdorff-Campbell formula.) Since $\varpi(\sigma\Gamma\sigma^{-1}\cap N_-)$ is a lattice in $N_-/\exp(\mathfrak g_{-2\alpha})\cong\mathfrak g_{-\alpha}$, we conclude that there exits $C>0$ such that for any $n\in(\sigma\Gamma\sigma^{-1}\cap N_-)\setminus\exp(\mathfrak g_{-\alpha})$ $$\|n_1\|_{\mathfrak g}=\|\varpi(n)\|_{\mathfrak g}\geq C.$$ This completes the proof of the lemma.
\end{proof}

\section{Counting rational points}\label{crp}

In this section, we define rational points in $G/\Gamma$ and their denominators. Then we will obtain some information about the distribution of these rational points (Propositions \ref{cp53}, \ref{cp54} and \ref{cp56}), which will be crucial in section \ref{pf1}.

\begin{definition}\label{def51}
A point $p\in G/\Gamma$ is called rational if $\Stab(p)\cap N_-\neq\{e\}$.
\end{definition}

By defintion, if $p$ is rational, then $\eta(a_tp)\to0$ as $t\to\infty$. By Corollary 6.2 in \cite{D0}, we immediately get the following

\begin{proposition}[Corollary 6.2 \cite{D0}]\label{cp51}
$p\in G/\Gamma$ is rational if and only if $p\in\bigcup_{\sigma\in\Sigma}MAN_-\sigma\Gamma$. Here $\Sigma$ is as in Theorem \ref{cthm2}.
\end{proposition}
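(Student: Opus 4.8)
The plan is to pass from the point $p$ to a group representative and then invoke reduction theory. Writing $p=g\Gamma$, we have $\Stab(p)=g\Gamma g^{-1}$, so $p$ is rational if and only if there is a $\gamma\in\Gamma\setminus\{e\}$ with $g\gamma g^{-1}\in N_-$; any such $\gamma$ is automatically unipotent, being conjugate into $N_-=\exp(\mathfrak n_-)$. So the statement to prove becomes: such a $\gamma$ exists if and only if $g\in\bigcup_{\sigma\in\Sigma}MAN_-\sigma\Gamma$.

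For the ``if'' direction I would argue directly. If $p=man_-\sigma\Gamma$ with $m\in M$, $a\in A$, $n_-\in N_-$, $\sigma\in\Sigma$, then by Theorem \ref{cthm2}(2) the group $\Gamma\cap\sigma^{-1}N_-\sigma$ is a cocompact lattice in the nontrivial nilpotent group $\sigma^{-1}N_-\sigma$, hence contains some $\gamma\neq e$; then $u:=\sigma\gamma\sigma^{-1}\in N_-\setminus\{e\}$. Since $MA$ normalizes $N_-$ — indeed $\Ad(a)$ preserves each root space and $\Ad(m)$ commutes with every $\Ad(a_t)$, so both preserve $\mathfrak n_-$ — the element $(man_-\sigma)\gamma(man_-\sigma)^{-1}=(ma)\bigl(n_-un_-^{-1}\bigr)(ma)^{-1}$ lies in $N_-\setminus\{e\}$ and belongs to $\Stab(p)$, so $p$ is rational.

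For the ``only if'' direction, suppose $g\gamma g^{-1}\in N_-\setminus\{e\}$ for some $\gamma\in\Gamma$. Let $P=MAN_-$, which is the group $ZN_-$ of Theorem \ref{cthm2} and is the stabilizer in $G$ of the coset $eP\in G/P$. Since $N_-\subset P$, the hypothesis says the nontrivial unipotent $\gamma$ fixes the boundary point $g^{-1}P\in G/P\cong\partial_\infty(G/K)$. In the rank one setting a nontrivial unipotent element of $\Gamma$ acts on $G/K$ as a parabolic isometry with a single fixed point at infinity, and by the reduction theory of Garland--Raghunathan \cite{GR} underlying Theorem \ref{cthm2} (equivalently Corollary 6.2 of \cite{D0}) every parabolic fixed point of $\Gamma$ lies in the $\Gamma$-orbit of one of the cusps $\sigma^{-1}P$, $\sigma\in\Sigma$ — these being exactly the points fixed by the lattices $\Gamma\cap\sigma^{-1}N_-\sigma$. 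Hence $g^{-1}P=\gamma_1\sigma^{-1}P$ for some $\gamma_1\in\Gamma$ and $\sigma\in\Sigma$, so $g\in P\sigma\gamma_1^{-1}=MAN_-\sigma\gamma_1^{-1}\subset MAN_-\sigma\Gamma$.

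The ``if'' direction is routine; the content is in the ``only if'' direction, whose one non-formal ingredient is that a parabolic fixed point of the lattice $\Gamma$ must be one of the finitely many cusps represented by $\Sigma$. This is where one genuinely uses that $\Gamma$ is a lattice rather than merely discrete, and it is the reason the proposition is quoted from \cite{D0} rather than derived from the elementary Lie-theoretic results of the preceding section. An alternative route to ``only if'' would note that $N_-$ is contracted by $\Ad(a_t)$ for $t>0$, so $a_t(g\gamma g^{-1})a_{-t}\to e$, whence $\eta(a_tp)\to0$ and the forward orbit $\{a_tp:t>0\}$ is divergent in $G/\Gamma$; one then applies the classification of divergent $\{a_t\}$-orbits, again a consequence of reduction theory, to locate $g$ in $\bigcup_{\sigma\in\Sigma}MAN_-\sigma\Gamma$.
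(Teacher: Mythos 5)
Your argument is correct, and it is a bit more self-contained than what the paper actually does: the paper disposes of the whole proposition in one line, observing that a rational point has $\eta(a_tp)\to0$ (since conjugation by $a_t$ contracts $N_-$) and then quoting Dani's Corollary 6.2 on divergent trajectories, which is exactly the ``alternative route'' you sketch at the end. Your main proof differs in two ways. First, you prove the ``if'' direction directly from Theorem \ref{cthm2}(2) by conjugating a nontrivial element of $\Gamma\cap\sigma^{-1}N_-\sigma$ by $man_-\sigma$, using that $MA$ normalizes $N_-$; this is elementary and makes that half independent of any citation. Second, for ``only if'' you recast rationality as saying that the unipotent $\gamma=g^{-1}ug$ fixes the boundary point $g^{-1}P\in G/P$ and then invoke the reduction-theoretic fact that every parabolic fixed point of $\Gamma$ is a $\Gamma$-translate of some $\sigma^{-1}P$, $\sigma\in\Sigma$; the paper instead phrases the same input dynamically, as the classification of divergent $\{a_t\}$-orbits. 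Both formulations bottom out in the same Garland--Raghunathan/Dani reduction theory, so there is no gap, and your citation of \cite{GR}/\cite{D0} at that point matches the paper's own level of detail; the geometric (parabolic fixed point) framing buys a cleaner conceptual picture of where the lattice hypothesis enters, while the paper's dynamical framing is the one that plugs directly into the quoted Corollary 6.2.
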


\begin{definition}
A point $p$ is called $\sigma$-rational for some $\sigma\in\Sigma$ if $p\in MAN_-\sigma\Gamma$.
\end{definition}

\begin{lemma}\label{cp52}
Let $p\in G/\Gamma$ be $\sigma$-rational and suppose that $p=m_1a_1n_1\sigma\Gamma=m_2a_2n_2\sigma\Gamma$. Then $a_1=a_2$.
\end{lemma}
\begin{proof}
Since $m_1a_1n_1\sigma\Gamma=m_2a_2n_2\sigma\Gamma$, the two lattices of $N_-$ below $$(m_1a_1n_1\sigma\Gamma\sigma^{-1}n_1^{-1}a_1^{-1}m_1^{-1})\cap N_-=Ad(m_1a_1n_1)(\sigma\Gamma\sigma^{-1}\cap N_-)$$ $$(m_2a_2n_2\sigma\Gamma\sigma^{-1}n_2^{-1}a_2^{-1}m_2^{-1})\cap N_-=Ad(m_2a_2n_2)(\sigma\Gamma\sigma^{-1}\cap N_-)$$ coincide and hence they have the same co-volume in $N_-$. This implies $a_1=a_2$.
\end{proof}

\begin{definition}\label{def53}
We define the $\sigma$-denominator of a $\sigma$-rational point $p\in G/\Gamma$ by $$d_\sigma(p)=e^{-\alpha t_0}$$ where we write $p=ma_{t_0}n\sigma\Gamma$ for some $t_0\in\mathbb R$.
\end{definition}
\begin{remark}
Note that by Lemma \ref{cp52}, this notion is well-defined.
\end{remark}

\begin{definition}
For any $U\subset N_+$, we will denote by $S_\sigma(U(e\Gamma),l_1,l_2)$ the subset of $\sigma$-rational points  in $U(e\Gamma)$ whose $\sigma$-denominators are between $l_1$ and $l_2$.
\end{definition}

It is known that the action of $\{a_t\}$ on $G/\Gamma$ is mixing: for any $f,g\in L^2(G/\Gamma)$ $$\int_{G/\Gamma} f(a_tx)g(x)mu_{G/\Gamma}(x)\to\int_{G/\Gamma}f(x)d\mu_{G/\Gamma}(x)\int_{G/\Gamma}g(x)d\mu_{G/\Gamma}(x)$$
as $t\to\infty$, where $\mu_{G/\Gamma}$ is the $G$-invariant probability measure on $G/\Gamma$. From this mixing property of $\{a_t\}$, one could deduce (\cite{KM}, Proposition 2.2.1) that for any bounded open subset $U\subset N_+$, $x\in G/\Gamma$ and any compactly supported continuous function $f$ on $G/\Gamma$ $$\frac1{\mu_{N_+}(U)}\int_U f(a_tnx)d\mu_{N_+}(n)\to\int_{G/\Gamma}f(x)d\mu_{G/\Gamma}(x).$$
Now for any bounded open subset $W\subset G/\Gamma$ with $\mu_{G/\Gamma}(\overline W\setminus W)=0$ , by approximating the characteristic function $\chi_W$ of $W$ with compactly supported continuous functions, we have 
\begin{eqnarray}\label{neq0}
\frac1{\mu_{N_+}(U)}\int_U \chi_W(a_tnx)d\mu_{N_+}(n)\to\int_{G/\Gamma}\chi_W(x)d\mu_{G/\Gamma}(x).
\end{eqnarray}
In the following, we will use this formula in the proof of Proposition \ref{cp53} and Proposition \ref{cp54}.

\begin{proposition}\label{cp53}
Suppose that $\mathfrak g_{2\alpha}=0$. Then for any open box $U\subset U_0\subset N_+$, the subset $S_\sigma(U(e\Gamma),C/2,C)$ is finite and $$|S_\sigma(U(e\Gamma),C/2,C)|\sim\mu_{N_+}(U)C^{\dim\mathfrak g_\alpha}$$ for any sufficiently large $C>0$. Here the implicit constant depends only on $G/\Gamma$.
\end{proposition}
\begin{proof}
Since $\mathfrak g_{2\alpha}=0$, we have $\mathfrak n_+=\mathfrak g_{\alpha}$ and $\mathfrak n_-=\mathfrak g_{-\alpha}$. Recall that $B_{N_+}(r)$ denotes the open box centered at $e$ with side length $r$ in $N_+$. 

 Let $n_+\Gamma\in N_+\Gamma$ be a $\sigma$-rational point, and let $m\in M,a_{t_0}\in A$ and $n\in N_-$ such that $$n_+\Gamma=ma_{t_0}n\sigma\Gamma.$$ By Theorem \ref{cthm2}, $N_-\cap\sigma\Gamma\sigma^{-1}$ is a cocompact lattice in $N_-$, so one could assume $n\in \Omega$ for some compact subset $\Omega$ in $N_-$. By Definition \ref{def53}, the $\sigma$-denominator of $n_+\Gamma$ sitting between $C/2$ and $C$ is equivalent to the condition that $$C/2\leq e^{-\alpha t_0}\leq C.$$ This implies that $$a_{\ln C/\alpha}n_+\Gamma\in MA_{-\ln2/\alpha,0}\Omega\sigma\Gamma.$$ This observation will be crucial in our analysis. In the following, we fix a small positive number $\delta>0$.

We first show that 
\begin{align}\label{neqn0}
|S_\sigma(U(e\Gamma),C/2,C)|\ll\mu_{N_+}(U)C^{\dim\mathfrak g_\alpha}
\end{align}
for sufficiently large $C>0$. Fix $\epsilon>0$ such that 
\begin{align}\label{neqn1}
\mu_{N_+}(U)\leq\mu_{N_+}(B_{N_+}(\epsilon)U)\leq2\mu_{N_+}(U).
\end{align}
Suppose that $n_q\Gamma$ is a $\sigma$-rational point in $S_\sigma(U(e\Gamma),C/2,C)$. Then $$a_{\ln C/\alpha}n_q\Gamma\in MA_{-\ln2/\alpha,0}\Omega\sigma\Gamma.$$ This implies that any point $n\Gamma\in B_{N_+}(\delta/C)n_q\Gamma$ satisfies $$a_{\ln C/\alpha}n\Gamma\in B_{N_+}(\delta)MA_{-\ln2/\alpha,0}\Omega\sigma\Gamma.$$ Note that $B_{N_+}(\delta/C)n_q\Gamma\subset B_{N_+}(\epsilon)U(e\Gamma)$ if $C>\delta/\epsilon$. Hence we obtain that for sufficiently large $C>0$
\begin{align}\label{neq1}
&|S_\sigma(U(e\Gamma),C/2,C)|\mu_{N_+}(B_{N_+}(\delta/C))\nonumber\\
\leq&\int_{B_{N_+}(\epsilon)U}\chi_{B_{N_+}(\delta)MA_{-\ln2/\alpha,0}\Omega\sigma\Gamma}(a_{\ln C/\alpha}n\Gamma)d\mu_{N_+}(n)
\end{align}
By formula (\ref{neq0}) and equation (\ref{neqn1}), as $C\to\infty$, we have
\begin{align}\label{ceq2}
 &\int_{B_{N_+}(\epsilon)U}\chi_{B_{N_+}(\delta)MA_{-\ln2/\alpha,0}\Omega\sigma\Gamma}(a_{\ln C/\alpha}n\Gamma)d\mu_{N_+}(n)\nonumber\\
 \sim&\mu_{N_+}(B_{N_+}(\epsilon)U)\int_{G/\Gamma}\chi_{B_{N_+}(\delta)MA_{-\ln2/\alpha,0}\Omega\sigma\Gamma}(x)d\mu_{G/\Gamma}(x)\nonumber\\
 \sim&\mu_{N_+}(U)\delta^{\dim\mathfrak g_\alpha}.
\end{align}
Since $\mu_{N_+}(B_{N_+}(\delta/C))\sim(\delta/C)^{\dim\mathfrak g_\alpha}$, equations (\ref{neq1}) and (\ref{ceq2}) then yield equation (\ref{neqn0}).

Now we show that 
\begin{align}\label{neqn2}
\mu_{N_+}(U)C^{\dim\mathfrak g_\alpha}\ll|S_\sigma(U(e\Gamma),C/2,C)|
\end{align}
for sufficiently large $C>0$. Pick a small box $U'$ in $U$ with 
\begin{align}\label{neqn3}
\mu_{N_+}(U')=\frac12\mu_{N_+}(U).
\end{align}
Suppose that a point $n\Gamma\in U'(e\Gamma)$ satisfies $$a_{\ln C/\alpha}n\Gamma\in B_{N_+}(\delta)MA_{-\ln2/\alpha,0}\Omega\sigma\Gamma.$$ Then there is a point $n_q\Gamma\in B_{N_+}(\delta/C)n\Gamma$ such that $$a_{\ln C/\alpha}n_q\Gamma\in MA_{-\ln2/\alpha,0}\Omega\sigma\Gamma.$$ This means that $n_q\Gamma$ is a $\sigma$-rational point with its $\sigma$-denominator between $C/2$ and $C$, and $n\Gamma\in B_{N_+}(\delta/C)n_q\Gamma$. Note that $$n_q\Gamma\in B_{N_+}(\delta/C)n\Gamma\subset (B_{N_+}(\delta/C)U')(e\Gamma).$$ Hence we obtain that 
\begin{align}\label{neq2}
&\int_{U'}\chi_{B_{N_+}(\delta)MA_{-\ln2/\alpha,0}\Omega\sigma\Gamma}(a_{\ln C/\alpha}n\Gamma)d\mu_{N_+}(n)\nonumber\\
\leq& |S_\sigma((B_{N_+}(\delta/C)U')(e\Gamma),C/2,C)|\mu_{N_+}(B_{N_+}(\delta/C)).
\end{align}
By formula (\ref{neq0}) and equation (\ref{neqn3}), as $C\to\infty$, we have
\begin{align}\label{ceqn2}
 &\int_{U'}\chi_{B_{N_+}(\delta)MA_{-\ln2/\alpha,0}\Omega\sigma\Gamma}(a_{\ln C/\alpha}n\Gamma)d\mu_{N_+}(n)\nonumber\\
 \sim&\mu_{N_+}(U')\int_{G/\Gamma}\chi_{B_{N_+}(\delta)MA_{-\ln2/\alpha,0}\Omega\sigma\Gamma}(x)d\mu_{G/\Gamma}(x)\nonumber\\
 \sim&\mu_{N_+}(U)\delta^{\dim\mathfrak g_\alpha}.
\end{align}
Since $\mu_{N_+}(B_{N_+}(\delta/C))\sim(\delta/C)^{\dim\mathfrak g_\alpha}$ and $B_{N_+}(\delta/C)U'(e\Gamma)\subset U(e\Gamma)$ for sufficiently large $C$, equations (\ref{neq2}) and (\ref{ceqn2}) then imply equation (\ref{neqn2}).
 This completes the proof of the proposition.
\end{proof}

\begin{proposition}\label{cp54}
Suppose that $\mathfrak g_{2\alpha}\neq0$. Then for any open box $U\subset U_0\subset N_+$,  the subset $S_\sigma(U(e\Gamma),C/2,C)$ is finite and $$|S_\sigma(U(e\Gamma),C/2,C)|\sim\mu_{N_+}(U)C^{\dim\mathfrak g_\alpha+2\dim\mathfrak g_{2\alpha}}$$ for any sufficiently large $C>0$. Here the implicit constant depends only on $G/\Gamma$.
\end{proposition}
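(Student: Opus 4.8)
The plan is to repeat the mixing/counting argument of the proof of Proposition~\ref{cp53}, now accounting for the fact that $\{a_t\}$ acts on $N_+=\exp(\mathfrak g_\alpha\oplus\mathfrak g_{2\alpha})$ with two distinct weights. The basic identities I will use are that conjugation by $a_t$ multiplies $\mathfrak g_\alpha$ by $e^{\alpha t}$ and $\mathfrak g_{2\alpha}$ by $e^{2\alpha t}$, hence $a_t B_{N_+}(r_1,r_2)a_{-t}=B_{N_+}(e^{\alpha t}r_1,e^{2\alpha t}r_2)$, together with the volume formula $\mu_{N_+}(B_{N_+}(r_1,r_2))\sim r_1^{\dim\mathfrak g_\alpha}r_2^{\dim\mathfrak g_{2\alpha}}$.

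First I would record, exactly as in the opening of the proof of Proposition~\ref{cp53}, that a $\sigma$-rational point $n\Gamma\in U(e\Gamma)$ has $\sigma$-denominator in $[C/2,C]$ if and only if $a_{\ln C/\alpha}n\Gamma\in MA_{-\ln 2/\alpha,0}N_-(\Omega)\sigma\Gamma=:S$ for a fixed compact $\Omega\subset N_-$ (compactness of $\Omega$ coming from cocompactness of $\sigma\Gamma\sigma^{-1}\cap N_-$ in $N_-$ in Theorem~\ref{cthm2}, and the $A$-coordinate being pinned down by Proposition~\ref{cp52}). Finiteness of $S_\sigma(U(e\Gamma),C/2,C)$ follows since after applying $a_{\ln C/\alpha}$ these points become separated points of the bounded set $S$; pulling back, distinct $\sigma$-rational points of $\sigma$-denominator in $[C/2,C]$ inside $U(e\Gamma)$ are separated by $\gg 1/C$ in the $\mathfrak g_\alpha$-direction and by $\gg 1/C^2$ in the $\mathfrak g_{2\alpha}$-direction (if desired, the latter can be made explicit through the central lattice $\sigma\Gamma\sigma^{-1}\cap\exp(\mathfrak g_{-2\alpha})$ of Proposition~\ref{cp43}).

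Next I would thicken $S$ by a fixed small box and evaluate $\int_U\chi_{B_{N_+}(\delta,\delta)S}(a_{\ln C/\alpha}n\Gamma)\,d\mu_{N_+}(n)$ in two ways as $C\to\infty$. By equidistribution of the expanding translates $a_{\ln C/\alpha}U(e\Gamma)$ — a consequence of mixing of $\{a_t\}$, used just as in Proposition~\ref{cp53} — the integral converges to $\mu_{N_+}(U)\,\mu_G(B_{N_+}(\delta,\delta)S)$, which by the local product structure of $\mu_G$ along $N_+MAN_-$ and the box volume formula is $\sim\mu_{N_+}(U)\,\delta^{\dim\mathfrak g_\alpha+\dim\mathfrak g_{2\alpha}}$. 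On the other hand, since $a_{\ln C/\alpha}B_{N_+}(\delta/C,\delta/C^2)a_{-\ln C/\alpha}=B_{N_+}(\delta,\delta)$, the integrand is nonzero precisely (up to bounded overlap, for $\delta$ small, by the anisotropic separation above) when $n\Gamma$ lies within $B_{N_+}(\delta/C,\delta/C^2)$ of a $\sigma$-rational point counted by $S_\sigma(U(e\Gamma),C/2,C)$ — one inclusion being immediate and the reverse being the same ``nearby rational point'' argument as in Proposition~\ref{cp53}. Hence the integral is also $\sim|S_\sigma(U(e\Gamma),C/2,C)|\cdot\mu_{N_+}(B_{N_+}(\delta/C,\delta/C^2))\sim|S_\sigma(U(e\Gamma),C/2,C)|\,\delta^{\dim\mathfrak g_\alpha+\dim\mathfrak g_{2\alpha}}C^{-\dim\mathfrak g_\alpha-2\dim\mathfrak g_{2\alpha}}$, and comparing with the first evaluation (the powers of $\delta$ cancelling) yields $|S_\sigma(U(e\Gamma),C/2,C)|\sim\mu_{N_+}(U)\,C^{\dim\mathfrak g_\alpha+2\dim\mathfrak g_{2\alpha}}$.

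The main obstacle is the bookkeeping in the second evaluation: because $N_+$ is non-abelian and $a_t$ has two weights, one must check carefully that left-translating a small anisotropic box $B_{N_+}(\delta/C,\delta/C^2)$ by a $\sigma$-rational point and then applying $a_{\ln C/\alpha}$ returns the fixed box $B_{N_+}(\delta,\delta)$ up to a bounded error, and that the boxes attached to distinct $\sigma$-rational points overlap with multiplicity bounded independently of $C$. Both facts reduce to the conjugation identity for $a_t$ on $N_+$ and to the anisotropic separation of $\sigma$-rational points, so beyond this extra case-tracking the proof is identical in spirit to that of Proposition~\ref{cp53}.
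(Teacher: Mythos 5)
Your proposal is correct and follows essentially the same route as the paper's proof: the same reduction of the denominator condition to $a_{\ln C/\alpha}n\Gamma$ landing in $MA_{-\ln 2/\alpha,0}N_-(\Omega)\sigma\Gamma$, the same thickening by $B_{N_+}(\delta,\delta)$, the same two evaluations of the integral (mixing on one side, counting translates of the anisotropic box $B_{N_+}(\delta/C,\delta/C^2)$ on the other), and the same comparison yielding $C^{\dim\mathfrak g_\alpha+2\dim\mathfrak g_{2\alpha}}$. Your extra remarks on separation and bounded overlap only make explicit what the paper leaves implicit.
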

\begin{proof}
It is almost identical to Proposition \ref{cp53} but computations involved here will be more complicated. Recall that $B_{N_+}(r_1,r_2)$ denotes the open box centered at $e$ with side length $r_1$ in $\mathfrak g_\alpha$-direction and $r_2$ in $\mathfrak g_{2\alpha}$-direction. 

We follow the arguments in Proposition \ref{cp53}. We show that 
\begin{align}\label{neqn4}
|S_\sigma(U(e\Gamma),C/2,C)|\ll\mu_{N_+}(U)C^{\dim\mathfrak g_\alpha+2\dim\mathfrak g_{2\alpha}}.
\end{align}
The proof of the other inequality is similar to Proposition \ref{cp53}.

Let $\delta>0$ be a small positive number. Fix $\epsilon>0$ such that 
\begin{align}\label{neqn5}
\mu_{N_+}(U)\leq\mu_{N_+}(B_{N_+}(\epsilon,\epsilon)U)\leq2\mu_{N_+}(U).
\end{align}
Suppose that $n_q\Gamma$ is a $\sigma$-rational point in $S_\sigma(U(e\Gamma),C/2,C)$. Then $$a_{\ln C/\alpha}n_q\Gamma\in MA_{-\ln2/\alpha,0}\Omega\sigma\Gamma.$$ This implies that any $n\Gamma\in B_{N_+}(\delta/C,\delta/C^2)n_q\Gamma$ satisfies $$a_{\ln C/\alpha}n\Gamma\in B_{N_+}(\delta,\delta)MA_{-\ln2/\alpha,0}\Omega\sigma\Gamma.$$ Note that $B_{N_+}(\delta/C,\delta/C^2)n_q\Gamma\subset B(\epsilon,\epsilon)U(e\Gamma)$ for sufficiently large $C$. Hence we obtain that for sufficiently large $C$
\begin{align}\label{neq5}
&|S_\sigma(U(e\Gamma),C/2,C)|\mu_{N_+}(B_{N_+}(\delta/C,\delta/C^2))\nonumber\\
\leq&\int_{B_{N_+}(\epsilon,\epsilon)U}\chi_{B_{N_+}(\delta,\delta)MA_{-\ln2/\alpha,0}\Omega\sigma\Gamma}(a_{\ln C/\alpha}n\Gamma)d\mu_{N_+}(n).
\end{align}
By formula (\ref{neq0}) and equation (\ref{neqn5}) is asymptotically equal to 
\begin{align}\label{ceq5}
&\int_{B_{N_+}(\epsilon,\epsilon)U}\chi_{B_{N_+}(\delta,\delta)MA_{-\ln2/\alpha,0}\Omega\sigma\Gamma}(a_{\ln C/\alpha}n\Gamma)d\mu_{N_+}(n)\nonumber\\
\sim&\mu_{N_+}(B_{N_+}(\epsilon,\epsilon)U)\int_{G/\Gamma}\chi_{B_{N_+}(\delta,\delta)MA_{-\ln2/\alpha,0}\Omega\sigma\Gamma}(x)d\mu_{G/\Gamma}(x)\nonumber\\
\sim&\mu_{N_+}(U)\delta^{\dim\mathfrak g_\alpha+\dim\mathfrak g_{2\alpha}}.
\end{align}
Since $\mu_{N_+}(B_{N_+}(\delta/C,\delta/C^2))\sim(\delta/C)^{\dim\mathfrak g_\alpha}(\delta/C^2)^{\dim\mathfrak g_{2\alpha}}$, equations (\ref{neq5}) and (\ref{ceq5}) then yield equation (\ref{neqn4}).
\end{proof}

\begin{definition}\label{def55}
We define the denominator of a rational point $p$ by $$d(p)=\inf_{v\in\Stab(p)\cap\exp(\mathfrak g_{-\beta})\setminus\{e\}}\|\log v\|_{\mathfrak g}^{\frac\alpha\beta}$$ where $\beta=\alpha$ if $\mathfrak g_{2\alpha}=0$ and $\beta=2\alpha$ if $\mathfrak g_{2\alpha}\neq0$.
\end{definition}
\begin{remark}
Note that by Lemma \ref{cp43}, $\Stab(p)\cap\exp(\mathfrak g_{-2\alpha})\neq\{e\}$ is a lattice in $\exp(\mathfrak g_{-2\alpha})$ if $\mathfrak g_{2\alpha}\neq0$ and so $d(p)$ is well-defined.
\end{remark}

\begin{definition}
For any $U\subset N_+$, we will denote by $S(U(e\Gamma),l_1,l_2)$ the subset of rational points in $U(e\Gamma)$ whose denominators are between $l_1$ and $l_2$.
\end{definition}

\begin{lemma}\label{cp55}
Let $p\in G/\Gamma$ be a rational point. Then $$d(p)\sim d_\sigma(p)$$ whenever $p$ is a $\sigma$-rational point for some $\sigma\in\Sigma$. Here the implicit constant depends only on $G$ and $\Gamma$.
\end{lemma}

\begin{proof}
Let $p=ma_{t_0}n\sigma\Gamma$ for some $\sigma\in\Sigma$. Suppose that $\mathfrak g_{2\alpha}=0$. Then 
\begin{align*}
\Stab(p)\cap\exp(\mathfrak g_{-\alpha})=&Ad(ma_{t_0}n)(\sigma\Gamma\sigma^{-1})\cap\exp(\mathfrak g_{-\alpha})\\
=&Ad(ma_{t_0}n)(\sigma\Gamma\sigma^{-1}\cap\exp(\mathfrak g_{-\alpha})).
\end{align*}
 Since $m,n,\sigma$ are all in compact subsets in $G$, this implies $$d(p)\sim e^{-\alpha t_0}=d_\sigma(p).$$ The proof in the case $\mathfrak g_{2\alpha}\neq0$ is similar.
\end{proof}

\begin{proposition}\label{cp56}
Let $U\subset U_0\subset N_+$ be an open box in $N_+$. For any sufficiently large $C>0$, the subset $S(U(e\Gamma),C/2,C)$ is finite and
\begin{enumerate}
\item if $\mathfrak g_{2\alpha}=0$, then $$|S(U(e\Gamma),C/2,C)|\sim\mu_{N_+}(U)C^{\dim\mathfrak g_\alpha}.$$
\item if $\mathfrak g_{2\alpha}\neq0$, then $$|S(U(e\Gamma),C/2,C)|\sim\mu_{N_+}(U)C^{\dim\mathfrak g_\alpha+2\dim\mathfrak g_{2\alpha}}.$$
\end{enumerate}
Here the implicit constants depend only on $G$ and $\Gamma$.
\end{proposition}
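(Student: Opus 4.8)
The plan is to deduce Proposition \ref{cp56} from the already-established counting Propositions \ref{cp53} and \ref{cp54} together with the comparison Proposition \ref{cp55}. The point is that the denominator $d(p)$ and the $\sigma$-denominator $d_\sigma(p)$ differ only by a bounded multiplicative constant $c_\sigma>1$ depending on $\sigma$ (and hence, since $\Sigma$ is finite, by a single constant $c=\max_\sigma c_\sigma$ independent of the point). So first I would record the dichotomy supplied by Proposition \ref{cp51}: every rational point in $U(e\Gamma)$ is $\sigma$-rational for at least one $\sigma\in\Sigma$, and $\Sigma$ is finite. Thus $S(U(e\Gamma),C/2,C)$ is contained in the union over $\sigma\in\Sigma$ of the sets of $\sigma$-rational points whose $\sigma$-denominator lies in $[C/(2c),\,cC]$, which by Propositions \ref{cp53} and \ref{cp54} is a finite set; this gives finiteness of $S(U(e\Gamma),C/2,C)$ and the upper bound
$$|S(U(e\Gamma),C/2,C)|\ll\sum_{\sigma\in\Sigma}\mu_{N_+}(U)C^{\kappa}\ll\mu_{N_+}(U)C^{\kappa},$$
where $\kappa=\dim\mathfrak g_\alpha$ in case (1) and $\kappa=\dim\mathfrak g_\alpha+2\dim\mathfrak g_{2\alpha}$ in case (2), using that $(cC)^\kappa\sim C^\kappa$ and $|\Sigma|<\infty$.

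For the lower bound I would fix one $\sigma\in\Sigma$ — for definiteness the one whose cusp neighborhood meets $U_0$, or simply any fixed $\sigma$ — and observe that every $\sigma$-rational point with $\sigma$-denominator in $[C/2,C]$ is a rational point with denominator in $[C/(2c),\,cC]$ by Proposition \ref{cp55}. Hence $S_\sigma(U(e\Gamma),C/2,C)\subset S(U(e\Gamma),C/(2c),cC)$, and covering the dyadic-type window $[C/(2c),cC]$ by a bounded number (independent of $C$) of windows of the form $[C'/2,C']$, one of these windows $S(U(e\Gamma),C'/2,C')$ must contain at least $\tfrac1{O(1)}|S_\sigma(U(e\Gamma),C/2,C)|\gg\mu_{N_+}(U)C^\kappa$ points. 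Relabelling $C'$, and since $C'\sim C$, this yields $|S(U(e\Gamma),C'/2,C'))|\gg\mu_{N_+}(U)(C')^\kappa$ for a cofinal sequence of scales; but because the bound is needed for \emph{every} sufficiently large $C$, I would instead argue directly that $S(U(e\Gamma),C/2,C)\supset$ (the $\sigma$-rational points with $\sigma$-denominator in $[C/(2c'),C/(2c')\cdot 2]$ for suitable $c'$), which is again $\gg\mu_{N_+}(U)C^\kappa$ by Proposition \ref{cp53} or \ref{cp54}. Combining the two bounds gives the claimed asymptotic, with implicit constants depending only on $G$, $\Gamma$ (through $c$, $|\Sigma|$, and the constants in Propositions \ref{cp53}--\ref{cp55}).

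The main obstacle is purely bookkeeping: the window $[C/2,C]$ for $d$ does not pull back to a window of exactly the form $[C'/2,C']$ for $d_\sigma$ once the constant $c$ from Proposition \ref{cp55} is inserted, so one must be careful that inflating and deflating the window by the bounded factor $c$ changes the count only by a bounded factor — which is exactly where one uses that Propositions \ref{cp53} and \ref{cp54} give the count for \emph{any} open subset and for \emph{all} sufficiently large $C$, so that $|S_\sigma(U(e\Gamma),aC,bC)|\sim\mu_{N_+}(U)C^\kappa$ for any fixed $0<a<b$. A secondary point to check is that the different $\sigma$'s cannot conspire to make the total count larger in order of magnitude than a single one; this is immediate since $|\Sigma|$ is finite and each contributes $O(\mu_{N_+}(U)C^\kappa)$. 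Once these uniformities are spelled out, the proposition follows by chaining the inclusions above.
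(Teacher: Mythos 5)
Your route --- combine Proposition \ref{cp51} (every rational point is $\sigma$-rational for one of the finitely many $\sigma\in\Sigma$), the cuspwise counts of Propositions \ref{cp53} and \ref{cp54}, and the comparison $d(p)\sim d_\sigma(p)$ of Proposition \ref{cp55} --- is exactly the deduction the paper makes, and your finiteness and upper-bound arguments are fine. The flaw is in the last step of your lower bound. From $d_\sigma(p)/c\leq d(p)\leq c\,d_\sigma(p)$ alone, no condition of the form $d_\sigma(p)\in[C/(2c'),C/c']$ can force $d(p)\in[C/2,C]$: such points only satisfy $d(p)\in[C/(2c'c),\,cC/c']$, and the inclusion $[C/(2c'c),\,cC/c']\subseteq[C/2,C]$ would require simultaneously $c'c\leq 1$ and $c'\geq c$, which is impossible once $c>1$. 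So the set of $\sigma$-rational points with $\sigma$-denominator in your proposed window need not be contained in $S(U(e\Gamma),C/2,C)$ at all, and, as you yourself note, the pigeonhole variant only produces the bound along a cofinal sequence of scales rather than for every large $C$. In short, a two-sided comparison with constant $c>\sqrt2$ is too weak to transfer a factor-$2$ window for $d_\sigma$ to a factor-$2$ window for $d$, and this is precisely the point that ``$d\sim d_\sigma$'' leaves open.

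The gap is repairable inside your framework in two standard ways. (i) Observe that for each $\sigma$ the two denominators differ by an \emph{exact} multiplicative constant $\lambda_\sigma$ once $\|\cdot\|_{\mathfrak g}$ is chosen $\Ad(K)$-invariant: writing $p=ma_{t_0}n\sigma\Gamma$, the group $\Ad(n)$ acts trivially on $\mathfrak g_{-\beta}$ ($N_-$ is abelian when $\mathfrak g_{2\alpha}=\emptyset$, and $\mathfrak g_{-2\alpha}$ is central in $\mathfrak n_-$ otherwise), so $d(p)^{\beta/\alpha}=e^{-\beta t_0}\inf_v\|\Ad(m)v\|_{\mathfrak g}$ over the lattice $\sigma\Gamma\sigma^{-1}\cap\exp(\mathfrak g_{-\beta})$, and $\Ad(m)$ is then norm-preserving; since any two norms give bi-Lipschitz equivalent metrics, neither the Hausdorff dimensions nor the classes $S_\gamma$ change, so this normalization is harmless. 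With it, $\{\sigma\text{-rational }p\in U(e\Gamma):d(p)\in[C/2,C]\}$ is exactly $S_\sigma(U(e\Gamma),C/(2\lambda_\sigma),C/\lambda_\sigma)$, to which Propositions \ref{cp53} and \ref{cp54} apply with $C$ replaced by $C/\lambda_\sigma$, and summing over $\sigma$ gives the proposition as stated. (ii) Alternatively, keep the arbitrary norm and prove the lower bound for a window of ratio $2c^2$ (take $d_\sigma(p)\in[C/(2c),C/c]$, which does force $d(p)\in[C/(2c^2),C]$); this weaker form of Proposition \ref{cp56} is all that Propositions \ref{cp71} and \ref{cp72} and the tree-like construction actually use, since only the order of magnitude of the window matters there. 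Either repair turns your outline into a complete proof.
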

\begin{proof}
This follows immediately from Proposition \ref{cp51}, Proposition \ref{cp53}, Proposition \ref{cp54} and Lemma \ref{cp55}.
\end{proof}

\section{Diophantine points}\label{dp}
In this section, we study Diophantine points in $G/\Gamma$. We will prove that the Diophantine points could be approximated by rational points (Propositions \ref{cp63}, \ref{cp64}, \ref{cp66}, \ref{cp67}), which could be considered as an analogue of Diophantine approximation in $\mathbb R$. These results will be used to construct tree-like subsets in the proof of Theorem \ref{ctheorem}.

\subsection{Preliminaries}\label{dp1}
Before we dive into the main discussion, we make some preparations in this subsection. Our goal is Proposition \ref{cp61}.

First we recall some properties of Lie groups and Lie algebras. For any $\rho>0$, let $\mathfrak B(\rho)$ denote the open ball of radius $\rho$ around $0$ in $\mathfrak g$ and $B(\rho):=\exp(\mathfrak B(\rho))$. There is a constant $0<\rho_0<1$ such that 
\begin{enumerate}
\item $\exp:\mathfrak B(\rho_0)\to B(\rho_0)$ is a diffeomorphism.
\item $B(\rho_0)$ is a Zassenhaus neighborhood of $e$ in $G$, which is a neighborhood of $e$ such that for any discrete group $\Delta\subset G$, there exists a connected nilpotent subgroup $F\subset G$ such that $\Delta\cap B(\rho_0)\subset F$. Note that any Lie group $G$ admits a Zaussenhaus neighborhood by Theorem 8.16 in \cite{R}.
\item $B(\rho_0)$ is a neighborhood of $e$ satisfying Corollary 11.18 in \cite{R} for $G$ and $\Gamma$.
\end{enumerate}
 There is a constant $\kappa>1$ depending only on $G$ such that 
\begin{enumerate}
\item for any $v\in\mathfrak B(\rho_0)$, one has $$\frac1\kappa d_G(\exp(v),e)\leq\|v\|_{\mathfrak g}\leq\kappa d_G(\exp(v),e).$$
\item if $v=v_-+v_0+v_+\in\mathfrak g$ where $v_-\in\mathfrak n_-$, $v_0\in\mathfrak g_0$ and $v_+\in\mathfrak n_+$, then $$\|v_-\|_{\mathfrak g},\|v_0\|_{\mathfrak g},\|v_+\|_{\mathfrak g}\leq\kappa\|v\|_{\mathfrak g}.$$
\end{enumerate}
In the following, we will fix such $\rho_0$ and $\kappa$. We denote by $\mathbb R_+$ the set of positive numbers.

\begin{lemma}\label{nl61}
Let $p\in G/\Gamma$ be a non-rational point. If $p$ is not Diophantine of type $\gamma$, then there are sequences $\{s_n\}$ and $\{t_n\}$ in $\mathbb R_+$ with $s_n,t_n\to\infty$ such that 
\begin{enumerate}
\item for any $n>0$, $s_n<t_n<s_{n+1}<t_{n+1}$;
\item one has $$\eta(a_{s_n}p)e^{\gamma s_n}\to\infty\text{ as }n\to\infty,\quad\eta(a_{t_n}p)e^{\gamma t_n}=\rho_0,$$ and $$\eta(a_{t}p)e^{\gamma t}>\rho_0,\quad\forall t\in(s_n,t_n).$$
\end{enumerate}
\end{lemma}
\begin{proof}
By Proposition \ref{cp51}, we know that $\{a_tp\}$ is not a divergent orbit as $t\to\infty$. So by Remark \ref{rm} there exists $\delta_1>0$ and a sequence $s_n\to\infty$ such that $$\eta(a_{s_n}p)\geq \delta_1.$$ Let $\delta_2=\min\{\delta_1/2,\rho_0\}$. By Definition \ref{def12}, if $p$ is not Diophantine of type $\gamma$, then there exists a sequence $l_n\to\infty$ such that $$\eta(a_{l_n}p)\leq\frac{\delta_2}2e^{-\gamma l_n}.$$ By passing to subsequences, one could make $\{s_n\}$ and $\{l_n\}$ alternating in the sense that $s_n<l_n<s_{n+1}<l_{n+1}$ for any $n\in\mathbb N$.

Now for any $n>0$, consider the function $$f(t)=\eta(a_{t}p)e^{\gamma t}$$ on $[s_n,l_n]$. We have $f(s_n)\geq \delta_1e^{\gamma s_n}$ and $$f(s_n)\to\infty\text{ as }n\to\infty,$$ while $f(l_n)\leq \delta_2/2<\rho_0$. By intermediate value theorem, the function $f(t)$ attains $\rho_0$ somewhere in $[s_n,l_n]$. Define $$t_n=\inf\{t \in[s_n,l_n]:f(t)=\rho_0\}.$$ Then by continuity of $f(t)$, we have $f(t_n)=\rho_0$, and for any $t\in(s_n,t_n)$ $$f(t)>\rho_0.$$ The sequences $\{s_n\}$ and $\{t_n\}$ then satisfy the desired properties.
\end{proof}

\begin{lemma}\label{nl62}
Let $p\in G/\Gamma,\{s_n\},\{t_n\}$ be as in Lemma \ref{nl61}. For sufficiently large $n\in\mathbb N$, there exists $l_n\in[s_n,t_n]$ and $v_n\in\Stab(a_{l_n}p)$ such that
\begin{enumerate}
\item $v_n$ is unipotent and $$\rho_0e^{-\gamma l_n}\leq\eta(a_{l_n}p)\leq d_G(v_n,e)\leq3\kappa^3\rho_0e^{-\gamma l_n};$$
\item if $v_n=\exp(v_{n,-}+v_{n,0}+v_{n,+})$ where $v_{n,-}\in\mathfrak n_-$, $v_{n,0}\in\mathfrak g_0$ and $v_{n,+}\in\mathfrak n_+$, then $$\frac{\rho_0}{3\kappa^3}e^{-\gamma l_n}\leq\|v_{n,-}\|_{\mathfrak g}\leq\kappa^2\rho_0e^{-\gamma l_n}.$$
\end{enumerate}
\end{lemma}
\begin{proof}
By Lemma \ref{nl61}, we know that $\eta(a_{t_n}p)=\rho_0e^{-\gamma t_n}$, which means that there exists $w_n\in\Stab(a_{t_n}p)$ such that $$d_G(w_n,e)=\eta(a_{t_n}p)=\rho_0e^{-\gamma t_n}.$$ It follows from Corollary 11.18 in \cite{R} that $w_n$ is unipotent for sufficiently large $t_n>0$. Write $$w_n=\exp(w_{n,-}+w_{n,0}+w_{n,+})$$ where $w_{n,-}\in\mathfrak n_-$, $w_{n,0}\in\mathfrak g_0$ and $w_{n,+}\in\mathfrak n_+$. Then $$\|w_{n,-}\|_{\mathfrak g},\|w_{n,0}\|_{\mathfrak g},\|w_{n,+}\|_{\mathfrak g}\leq\kappa^2d_G(w_n,e)=\kappa^2\rho_0e^{-\gamma t_n}.$$ If $\|w_{n,-}\|_{\mathfrak g}\geq\rho_0 e^{-\gamma t_n}$, then we are done by taking $l_n:=t_n$ and $v_n:=w_n$.

Suppose that $$\|w_{n,-}\|_{\mathfrak g}<\rho_0 e^{-\gamma t_n}.$$ Since $\Ad(a_t)$ $(t<0)$ expands $\mathfrak n_-$, we can define $\tau_n>0$ to be the smallest positive number such that 
\begin{align}\label{neq61}
\|\Ad(a_{-\tau_n})w_{n,-}\|_{\mathfrak g}=\rho_0e^{-\gamma t_n}.
\end{align}
 We claim that in this case we can take $$l_n:=t_n-\tau_n\textup{ and } v_n:=\Ad(a_{-\tau_n})w_n\in\Stab(a_{l_n}p).$$ Note that $v_n$ is unipotent.
 
First, we prove that $l_n\in[s_n,t_n]$ for sufficiently large $n$. Indeed, by the definition of $\tau_n$, $$\|\Ad(a_{-\tau})w_{n,-}\|_{\mathfrak g}\leq\rho_0 e^{-\gamma t_n}$$ for any $\tau\in(0,\tau_n)$. Since $\{\Ad a_t\}$ $(t<0)$ stablizes $\mathfrak g_0$ and contracts $\mathfrak n_+$, we also have $$\|\Ad(a_{-\tau})w_{n,0}\|_{\mathfrak g}=\|w_{n,0}\|_{\mathfrak g}\leq\kappa^2\rho_0 e^{-\gamma t_n}$$ $$\|\Ad(a_{-\tau})w_{n,+}\|_{\mathfrak g}\leq\|w_{n,+}\|_{\mathfrak g}\leq\kappa^2\rho_0 e^{-\gamma t_n}$$ for any $\tau\in(0,\tau_n)$. This implies that for any $\tau\in(0,\tau_n)$, $$d_G(\Ad(a_{-\tau})w_n,e)\leq3\kappa^3\rho_0 e^{-\gamma t_n}.$$ Note that $\Ad(a_{-\tau})w_n\in\Stab(a_{t_n-\tau}p)$. By the definition of the injectivity radius function $\eta$, we have 
\begin{align}\label{neq62}
\eta(a_{t_n-\tau}p)\leq d_G(\Ad(a_{-\tau})w_n,e)\leq3\kappa^3\rho_0 e^{-\gamma t_n}
\end{align}
for any $\tau\in(0,\tau_n)$. This implies that $$\eta(a_{t_n-\tau}p)e^{\gamma(t_n-\tau)}\leq 3\kappa^3\rho_0,\quad\tau\in(0,\tau_n).$$ By condition 2 in Lemma \ref{nl61}, we conclude that $$s_n\leq l_n=t_n-\tau_n\leq t_n$$ if $n$ is sufficiently large. 

Now we prove that $$l_n:=t_n-\tau_n\textup{ and } v_n:=\Ad(a_{-\tau_n})w_n\in\Stab(a_{l_n}p)$$ satisfy conditions 1 and 2. Since $l_n\in[s_n,t_n]$, by Lemma \ref{nl61},  we have
\begin{align}\label{neq63}
\eta(a_{l_n}p)=\eta(a_{t_n-\tau_n}p)>\rho_0 e^{-\gamma l_n}=\rho_0 e^{-\gamma(t_n-\tau_n)}.
\end{align} 
Comparing equations (\ref{neq62}) and (\ref{neq63}), we have 
\begin{align}\label{neq64}
e^{\gamma\tau_n}\leq3\kappa^3.
\end{align}
Condition 1 then follows from equations (\ref{neq62}) and (\ref{neq63}), while condition 2 follows from equations (\ref{neq61}) and (\ref{neq64}). This completes the proof of the lemma.
\end{proof}.

\begin{definition}
Let $p\in G/\Gamma$, $\gamma>0$, $t\in\mathbb R_+$ and $v\in\Stab(a_t p)$. We say that $t $ and $v$ satisfy $\gamma$-condition if
\begin{enumerate}
\item $v$ is unipotent and $$\rho_0e^{-\gamma t }\leq\eta(a_t p)\leq d_G(v,e)\leq3\kappa^3\rho_0e^{-\gamma t };$$
\item if $v=\exp(v_-+v_0+v_+)$ where $v_-\in\mathfrak n_-$, $v_0\in\mathfrak g_0$ and $v_+\in\mathfrak n_+$, then $$\frac{\rho_0}{3\kappa^3}e^{-\gamma t }\leq\|v_-\|_{\mathfrak g}\leq\kappa^2\rho_0e^{-\gamma t }.$$
\end{enumerate}
\end{definition}

Summarizing Lemma \ref{nl61} and Lemma \ref{nl62}, we obtain the following proposition.

\begin{proposition}\label{cp61}
Let $p\in G/\Gamma$ be a non-rational point. If $p$ is not Diophantine of type $\gamma$, then there exist a sequence $t_n\to\infty$ and a sequence $v_n\in\Stab(a_{t_n}p)$ such that for each $n\in\mathbb N$, $t_n $ and $v_n$ satisfy $\gamma$-condition.
\end{proposition}

\subsection{Case $\mathfrak g_{2\alpha}=0$}
We first study the Diophantine points in the case $\mathfrak g_{2\alpha}=0$, where the analysis is easy to follow. This case would provide a guide for the case $\mathfrak g_{2\alpha}\neq0$. The goal of this subsection is to prove Proposition \ref{cp63} and Proposition \ref{cp64}.

\begin{proposition}\label{cp62}
 Let $p\in U_0(e\Gamma)\subset G/\Gamma$ be non-rational, $t_0\in\mathbb R_+$ a sufficiently large number and $v_0\in\Stab(a_{t_0}p)$ such that $t_0$ and $v_0$ satisfy $\gamma$-condition. Then there is a rational point $q\in U_0(e\Gamma)$ with $d(q)\sim e^{(\alpha-\gamma)t_0}$ (the implicit constant depends only on $G/\Gamma$) such that $$p\in B_{N_+}(Cd(q)^{-\frac\alpha{\alpha-\gamma}})q$$ for some constant $C>0$ depending only on $G/\Gamma$ and $\gamma$ .
\end{proposition}
\begin{proof}
By $\gamma$-condition, we have $v_0\notin N_+$. Let $v_0=\exp(v)$ for some $v\in\mathfrak g$. By Lemma \ref{cp42}, there is a unique $n\in N_+$ such that $\Ad n(v_0)\in N_-$ and $$\Ad n(v)=z\in\mathfrak n_-.$$ Now set $n^{-1}=\exp(u)$ for some $u\in\mathfrak n_+$. Then we have 
\begin{equation}\label{eqn61}
z+[u,z]+[u,[u,z]]/2=v
\end{equation}
 $$z\in\mathfrak n_-=\mathfrak g_{-\alpha} ,\quad[u,z]\in\mathfrak g_0,\quad [u,[u,z]]\in\mathfrak n_+=\mathfrak g_\alpha .$$ By $\gamma$-condition, $$\|v\|_{\mathfrak g}\sim d_G(v_0,e)\sim e^{-\gamma t_0}$$ and hence 
\begin{align}\label{neq8}
\|[u,z]\|_{\mathfrak g}\ll\|v\|_{\mathfrak g}\sim e^{-\gamma t_0}.
\end{align} 
By equation (\ref{eqn61}) and $\gamma$-condition, we also have $$\|z\|_{\mathfrak g}\sim e^{-\gamma t_0}.$$ By equation (\ref{neq8}) and Lemma \ref{cp41}, this implies $$\|u\|_{\mathfrak g}\leq C_1$$ for some constant $C_1>0$, and hence $u$ and $n$ are bounded.

Now since $\Ad n(v_0)\in N_-$, by definition, we know that $na_{t_0}p$ is rational and so is $a_{-t_0}na_{t_0}p$. Let $$q=(a_{-t_0}na_{t_0})p.$$ By the boundedness of $n$ and $\gamma$-condition, we have $$\eta(na_{t_0}p)\sim \eta(a_{t_0}p)\sim d_G(v_0,e)\sim e^{-\gamma t_0}.$$ Since $\Ad n(v_0)\in\Stab(na_{t_0}p)\cap\exp(\mathfrak g_{-\alpha})$ and $d_G(v_0,e)\sim e^{-\gamma t_0}$, we also have an estimate for the denominator of $na_{t_0}p$ $$e^{-\gamma t_0}\sim\eta(na_{t_0}p)\ll d(na_{t_0}p)\ll d_G(\Ad n(v_0),e)\sim e^{-\gamma t_0}.$$ Hence the denominator of $q=a_{-t_0}na_{t_0}p$ is equal to $$d(q)=e^{\alpha t_0}d(na_{t_0}p)\sim e^{\alpha t_0}\eta(na_{t_0}p)\sim e^{(\alpha-\gamma)t_0}.$$ Therefore we have $$d_G(a_{-t_0}na_{t_0},e)\ll e^{-\alpha t_0}\sim d(q)^{-\frac\alpha{\alpha-\gamma}},$$ and $$p=(a_{-t_0}na_{t_0})^{-1}q\in B_{N_+}(Cd(q)^{-\frac\alpha{\alpha-\gamma}})q$$ for some constant $C>0$ depending only on $G/\Gamma$ and $\gamma$. This completes the proof of the proposition.
\end{proof}

\begin{proposition}\label{cp63}
 Let $p\in U_0(e\Gamma)\subset G/\Gamma$ be a non-rational point. If $p$ is not Diophantine of type $\gamma$, then there exist an constant $C>0$ depending only on $G/\Gamma$ and $\gamma$, and a sequence of distinct rational points $q_n\in U_0(e\Gamma)$ with $d(q_n)\to\infty$ such that $$p\in B_{N_+}(Cd(q_n)^{-\frac\alpha{\alpha-\gamma}})q_n.$$
\end{proposition}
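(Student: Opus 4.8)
The plan is to combine Proposition \ref{cp61} with Proposition \ref{cp62} in the obvious way. First I would invoke Proposition \ref{cp61}: since $p$ is non-rational and not Diophantine of type $\gamma$, there is a sequence $t_n\to\infty$ and elements $v_n\in\Stab(a_{t_n}p)$ such that $(t_n,v_n)$ satisfy conditions 1 and 2 of that proposition. Discarding finitely many terms, we may assume every $t_n$ is large enough to apply Proposition \ref{cp62} to the pair $(t_n,v_n)$. This yields, for each $n$, a rational point $q_n\in U_0(e\Gamma)$ with $d(q_n)\sim e^{(\alpha-\gamma)t_n}$ and $p\in B_{N_+}\bigl(C\,d(q_n)^{-\frac{\alpha}{\alpha-\gamma}}\bigr)q_n$, where $C>0$ is absolute (the implicit constants in Proposition \ref{cp62} being absolute, they do not depend on $n$). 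Since $\gamma<\alpha$, the exponent $\alpha-\gamma$ is positive, so $d(q_n)\sim e^{(\alpha-\gamma)t_n}\to\infty$ as $t_n\to\infty$.

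It remains to arrange that the $q_n$ are distinct. This is essentially automatic: since $d(q_n)\to\infty$, the sequence $d(q_n)$ takes infinitely many distinct values, so after passing to a subsequence the points $q_n$ are pairwise distinct (two rational points with different denominators are certainly different). I would phrase this as: pass to a subsequence along which $d(q_n)$ is strictly increasing, which is possible because $d(q_n)\to\infty$; then the $q_n$ along this subsequence are distinct, $d(q_n)\to\infty$ still holds, and the containment $p\in B_{N_+}\bigl(C\,d(q_n)^{-\frac{\alpha}{\alpha-\gamma}}\bigr)q_n$ persists for each surviving index. This gives exactly the conclusion of Proposition \ref{cp63}.

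There is essentially no obstacle here; the proposition is a bookkeeping corollary of the two preceding propositions. The one point that deserves a sentence of care is the uniformity of the constant $C$: Proposition \ref{cp62} must be applied to infinitely many pairs $(t_n,v_n)$, and we need the resulting $C$ to be the same for all of them. This is fine because the constant produced by Proposition \ref{cp62} is declared absolute (tracing back, it comes from the implicit constants in Proposition \ref{cp41} and in the comparison $\|v\|_{\mathfrak g}\sim d(v_0,e)_G$, together with the bound $C_1$ on $\|u\|_{\mathfrak g}$, none of which depend on $p$ or on the particular large time). So one takes $C$ to be that single absolute constant and the same inequality holds simultaneously for every $n$.

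\begin{proof}
Since $p$ is non-rational and not Diophantine of type $\gamma$, Proposition \ref{cp61} provides a sequence $t_n\to\infty$ and elements $v_n\in\Stab(a_{t_n}p)$ such that $t_n$ and $v_n$ satisfy conditions 1 and 2 of Proposition \ref{cp61}. Discarding finitely many terms, we may assume each $t_n$ is large enough that Proposition \ref{cp62} applies to the pair $(t_n,v_n)$. Hence for each $n$ there is a rational point $q_n\in U_0(e\Gamma)$ with
$$d(q_n)\sim e^{(\alpha-\gamma)t_n}$$
(the implicit constant being absolute) and an absolute constant $C>0$, independent of $n$, such that
$$p\in B_{N_+}\bigl(Cd(q_n)^{-\frac{\alpha}{\alpha-\gamma}}\bigr)q_n.$$
Since $0\leq\gamma<\alpha$, we have $\alpha-\gamma>0$, so $d(q_n)\sim e^{(\alpha-\gamma)t_n}\to\infty$ as $n\to\infty$. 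Passing to a subsequence along which $d(q_n)$ is strictly increasing, the points $q_n$ become pairwise distinct while still $d(q_n)\to\infty$ and the containment above continues to hold for each surviving index. This is precisely the assertion of the proposition.
\end{proof}
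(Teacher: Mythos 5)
Your proof is correct and follows exactly the paper's own argument: apply Proposition \ref{cp61} to get the times $t_n$ and elements $v_n$, then apply Proposition \ref{cp62} to each pair to obtain the rational points $q_n$ with $d(q_n)\sim e^{(\alpha-\gamma)t_n}\to\infty$. Your extra remarks on the uniformity of $C$ and on extracting distinct $q_n$ are fine but are minor bookkeeping points the paper leaves implicit.
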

\begin{proof}
By Proposition \ref{cp61}, there exist infinitely many $t_n\to\infty$ and $v_n\in\Stab(a_{t_n}p)$ satisfying $\gamma$-condition. Therefore, by Proposition \ref{cp62}, there exist infinitely many rational points $q_n\in U_0(e\Gamma)$  with $d(q_n)\sim e^{(\alpha-\gamma)t_n}$ such that $$p\in B(Cd(q_n)^{-\frac\alpha{\alpha-\gamma}})q_n$$ for some constant $C>0$.
\end{proof}

\begin{proposition}\label{cp64}
 Let $p\in U_0(e\Gamma)$ and let $\epsilon>0$ be a sufficiently small number. If there exist a constant $C>0$ and a sequence $q_n\in U_0(e\Gamma)$ of distinct rational points with $d(q_n)\to\infty$ such that $$p\in B_{N_+}(C d(q_n)^{-\frac\alpha{\alpha-(\gamma+\epsilon)}})q_n,$$ then $p$ is not Diophantine of type $\gamma$.
\end{proposition}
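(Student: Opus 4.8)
The plan is to show that the hypothesis forces $\eta(a_t p)$ to dip below $e^{-\gamma t}$ at a sequence of times going to infinity, contradicting Diophantine type $\gamma$. Fix $n$ and write $q_n = g_n(e\Gamma)$ with $g_n \in N_+$; since $q_n$ is rational, there is a nontrivial unipotent $w_n \in \Stab(q_n) \cap N_-$, and by Proposition \ref{cp55} we may take $w_n = \exp(z_n)$ with $z_n \in \mathfrak g_{-\alpha}\setminus\{0\}$ and $\|z_n\| \sim d(q_n)^{-1}$ (up to an absolute constant). The hypothesis $p \in B_{N_+}(Cd(q_n)^{-\alpha/(\alpha-(\gamma+\epsilon))})q_n$ means $p = h_n q_n$ for some $h_n = \exp(u_n) \in N_+$ with $\|u_n\| \ll d(q_n)^{-\alpha/(\alpha-(\gamma+\epsilon))}$. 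Then $\Ad(h_n)(z_n) \in \Stab(p)$, and I would choose the ``right'' time $t_n$ at which to test $\eta(a_{t_n}p)$.

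First I would compute the element of $\Stab(a_{t_n}p)$ obtained by conjugating: $\Ad(a_{t_n})\Ad(h_n)(z_n)$. Writing $\Ad(h_n)(z_n) = z_n + [u_n,z_n] + \tfrac12[u_n,[u_n,z_n]]$ with the three terms lying in $\mathfrak g_{-\alpha}$, $\mathfrak g_0$, $\mathfrak g_\alpha$ respectively (exactly as in equation (\ref{eqn61})), applying $\Ad(a_{t_n})$ scales these by $e^{-\alpha t_n}$, $1$, $e^{\alpha t_n}$. So the norm of the resulting stabilizer element is $\ll e^{-\alpha t_n}\|z_n\| + \|u_n\|\|z_n\| + e^{\alpha t_n}\|u_n\|^2\|z_n\|$, using Proposition \ref{cp41} to bound the bracket terms. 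The second choice to make is $t_n$: I would pick $t_n$ so that the first term $e^{-\alpha t_n}\|z_n\| \sim e^{-\alpha t_n} d(q_n)^{-1}$ equals $e^{-\gamma t_n}$, i.e. $e^{(\alpha-\gamma)t_n} \sim d(q_n)$, so $e^{-\gamma t_n}\sim d(q_n)^{-\gamma/(\alpha-\gamma)}$ and $t_n\to\infty$. With this choice the first term is $\sim e^{-\gamma t_n}$, and I must check the other two are negligible compared to it: the second term is $\ll \|u_n\| e^{-\gamma t_n}$, which is small since $\|u_n\|\to 0$; the third term, after substituting $e^{\alpha t_n}\sim d(q_n)^{\alpha/(\alpha-\gamma)}$ and $\|u_n\|\ll d(q_n)^{-\alpha/(\alpha-(\gamma+\epsilon))}$, is $\ll d(q_n)^{\alpha/(\alpha-\gamma) - 2\alpha/(\alpha-(\gamma+\epsilon))} d(q_n)^{-1}$, and a direct exponent comparison shows this is $\ll e^{-\gamma t_n} d(q_n)^{-c}$ for some $c>0$ when $\epsilon$ is small — this is precisely where the strict gap $\gamma+\epsilon$ in the exponent is used and where the elementary but slightly fiddly arithmetic lives.

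Combining the three estimates gives $\eta(a_{t_n}p) \le d(\Ad(a_{t_n})\Ad(h_n)(z_n), e)_G \ll e^{-\gamma t_n}$, and in fact, since the leading term matches and the others are lower order, one gets $\eta(a_{t_n}p) \le \tfrac12 e^{-\gamma t_n}$ (say) for all large $n$. As $t_n\to\infty$ and the $q_n$ are distinct with $d(q_n)\to\infty$, for any constant $C'>0$ we have $\eta(a_{t_n}p) < C' e^{-\gamma t_n}$ for $n$ large, so $p$ cannot be Diophantine of type $\gamma$. One should also note the stabilizer elements produced are nontrivial (since $z_n\neq 0$ and $\Ad$ is injective), so they genuinely witness a small injectivity radius.

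The main obstacle is the exponent bookkeeping in the third step: one must verify that with $t_n$ pinned down by $e^{(\alpha-\gamma)t_n}\sim d(q_n)$, the $\mathfrak g_\alpha$-component $e^{\alpha t_n}\|u_n\|^2\|z_n\|$ is genuinely dominated by $e^{-\gamma t_n}$, which requires $\epsilon$ small enough (depending only on $\alpha,\gamma$) and a careful comparison of powers of $d(q_n)$; the rest is a routine conjugation computation parallel to Proposition \ref{cp62}.
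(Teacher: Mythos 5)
Your overall mechanism (transport the shortest $N_-$-stabilizer element of $q_n$ to $p$ via $\Ad(h_n)$, flow forward a time $t_n\sim\ln d(q_n)$, and estimate the three graded components) is essentially the paper's mechanism, which packages it more briefly: at time $t_n$ the points $a_{t_n}p$ and $a_{t_n}q_n$ lie within bounded distance, and $\eta(a_{t_n}q_n)=d(q_n)e^{-\alpha t_n}$. But as written your argument has two genuine problems. First, the relation between the denominator and the stabilizer is inverted: by Definition \ref{def55} (with $\beta=\alpha$) and Proposition \ref{cp55}, the shortest element of $\Stab(q_n)\cap\exp(\mathfrak g_{-\alpha})$ has norm comparable to $d(q_n)$, not to $d(q_n)^{-1}$ (this is exactly why the paper can write $\eta(a_{t_n}q_n)=d(q_n)/e^{\alpha t_n}$). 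With your stated $\|z_n\|\sim d(q_n)^{-1}$, the requirement $e^{-\alpha t_n}\|z_n\|=e^{-\gamma t_n}$ would force $e^{(\alpha-\gamma)t_n}\sim d(q_n)^{-1}$, i.e.\ $t_n\to-\infty$; the equation $e^{(\alpha-\gamma)t_n}\sim d(q_n)$ that you actually use is the one coming from the correct relation, so the bookkeeping is internally inconsistent (though repairable).

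Second, and more seriously, anchoring $t_n$ at $\gamma$ rather than $\gamma+\epsilon$ makes the conclusion fail. After correcting $\|z_n\|\sim d(q_n)$, the dominant term is $e^{-\alpha t_n}\|z_n\|\asymp e^{-\gamma t_n}$ with an implicit constant you do not control (it comes from the comparison $\|z_n\|\sim d(q_n)$, from $\|v\|_{\mathfrak g}\sim d(\exp v,e)_G$, and from the conjugation distortion), so your estimates only give $\eta(a_{t_n}p)\le C''e^{-\gamma t_n}$ for some fixed $C''$. That does not negate being Diophantine of type $\gamma$: the definition asks for the existence of \emph{some} $C>0$ with $\eta(a_tp)\ge Ce^{-\gamma t}$ for all $t>0$, so you must beat every constant, and your claimed upgrade to ``$\le\tfrac12 e^{-\gamma t_n}$'' neither follows from the estimates nor would it suffice. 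This is precisely what the $\epsilon$ in the hypothesis is for: choose $t_n=\frac{1}{\alpha-(\gamma+\epsilon)}\ln d(q_n)$, as the paper does. Then all three terms in your expansion are $\ll e^{-(\gamma+\epsilon)t_n}=e^{-\epsilon t_n}e^{-\gamma t_n}$, and the factor $e^{-\epsilon t_n}\to0$ beats any constant, which is the desired conclusion. With these two corrections your explicit conjugation computation becomes a valid, if more laborious, version of the paper's short proof.
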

\begin{proof}
Let $t_n=\ln d(q_n)^{\frac1{\alpha-(\gamma+\epsilon)}}$. Then we have $$d_{G/\Gamma}(a_{t_n}q_n,a_{t_n}p)\leq C$$ and hence $$\eta(a_{t_n}p)\sim\eta(a_{t_n}q_n).$$ Note that $$\eta(a_{t_n}q_n)\ll d(a_{t_n}q_n)=d(q_n)/e^{\alpha t_n}=e^{-(\gamma+\epsilon) t_n}.$$ This implies that $$\eta(a_{t_n}p)\leq C'e^{-(\gamma+\epsilon)t_n}$$ for some constant $C'>0$ and infinitely many $t_n\to\infty$. By definition, $p$ is not Diophantine of type $\gamma$.
\end{proof}

\subsection{Case $\mathfrak g_{2\alpha}\neq0$}
Now we consider the case $\mathfrak g_{2\alpha}\neq0$. We aim to prove Proposition \ref{cp66} and Proposition \ref{cp67}.

\begin{lemma}\label{nl63}
Let $\Sigma$, $K$ and $V_0$ be as in Theorem \ref{cthm2}. Let $\sigma\in\Sigma$, $k\in K$, $n_0\in V_0$ and $s$ a sufficiently large number. Then we have $$\Stab(ka_sn_0\sigma\Gamma)\cap B(\rho_0)\subset\Ad(ka_sn_0)(\sigma\Gamma\sigma^{-1}\cap N_-).$$
\end{lemma}
\begin{proof}
Since $s$ is sufficiently large and $\{\Ad a_t\}$ $(t>0)$ contracts $N_-$, there is a nontrivial element $$v\in\Ad(ka_sn_0)(\sigma\Gamma\sigma^{-1}\cap N_-)\cap B(\rho_0).$$ As $B(\rho_0)$ is a Zassenhauss neighborhood, there exists a connected nilpotent subgroup $F\subset G$ such that $$\Stab(ka_sn_0\sigma\Gamma)\cap B(\rho_0)\subset F.$$ By Corollary 11.18 in \cite{R},  every element in $\Stab(ka_sn_0\sigma\Gamma)\cap B(\rho_0)$ is unipotent. Hence we could assume that $F$ is a unipotent subgroup, and $F$ is contained in a conjugate of $N_-$, say, $$F\subset \Ad(g)N_-.$$ Now as $v\in F$, we have $$v\in\Ad(g)N_-\cap\Ad(ka_sn_0)N_-.$$ By Lemma 3.4 in \cite{BZ}, this implies that $$\Ad(g)N_-=\Ad(ka_sn_0)N_-.$$ Therefore, we have 
\begin{align*}
\Stab(ka_sn_0\sigma\Gamma)\cap B(\rho_0)\subset&\Stab(ka_sn_0\sigma\Gamma)\cap F\\
\subset&\Stab(ka_sn_0\sigma\Gamma)\cap\Ad(ka_sn_0)N_-\\
=&\Ad(ka_sn_0)(\sigma\Gamma\sigma^{-1}\cap N_-).
\end{align*}
This completes the proof of the lemma.
\end{proof}

\begin{lemma}\label{nl64}
Let $\Sigma$, $K$ and $V_0$ be as in Theorem \ref{cthm2}. Let $\sigma\in\Sigma$, $k\in K$, $n_0\in V_0$. Then there exists $C>0$ depending only on $\Sigma$, $K$, $V_0$ and $G/\Gamma$ such that $$\frac{d_G(\Ad(ka_sn_0)u,e)}{d_G(\Ad(ka_sn_0)v,e)}\geq\frac{Ce^{\alpha s}}{\|\log v\|_{\mathfrak g}}$$ for any nontrivial elements $$u\in(\sigma\Gamma\sigma^{-1}\cap N_-)\setminus\exp(\mathfrak g_{-2\alpha})\textup{ and }v\in\sigma\Gamma\sigma^{-1}\cap\exp(\mathfrak g_{-2\alpha})$$ with $$\Ad(ka_sn_0)u,\Ad(ka_sn_0)v\in B(\rho_0).$$
\end{lemma}
\begin{proof}
Write $$u=\exp(u_1+u_2)\textup{ and }v=\exp(v_2)$$ where $u_1\in\mathfrak g_{-\alpha}$, $u_2\in\mathfrak g_{-2\alpha}$ and $v_2\in\mathfrak g_{-2\alpha}$. Since $\Sigma$, $K$ and $V_0$ are compact, we compute
\begin{align*}
\frac{d_G(\Ad(ka_sn_0)u,e)}{d_G(\Ad(ka_sn_0)v,e)}\geq&\frac1{\kappa^2}\frac{\|\Ad(ka_sn_0)(u_1+u_2)\|_{\mathfrak g}}{\|\Ad(ka_sn_0)v_2\|_{\mathfrak g}}\\
\geq&C_1\frac{\|\Ad(a_sn_0)(u_1+u_2)\|_{\mathfrak g}}{\|\Ad(a_sn_0)v_2\|_{\mathfrak g}}\\
\geq&C_2\frac{\|\Ad(a_s)u_1\|_{\mathfrak g}}{\|\Ad(a_s)v_2\|_{\mathfrak g}}=C_2e^{\alpha s}\frac{\|u_1\|_{\mathfrak g}}{\|v_2\|_{\mathfrak g}}\\
\geq&\frac{Ce^{\alpha s}}{\|v_2\|_{\mathfrak g}}.
\end{align*}
for some $C_1,C_2,C>0$. Note that the last step follows from Lemma \ref{nl42}. 
\end{proof}

\begin{lemma}\label{cp}
Suppose that $\mathfrak g_{2\alpha}\neq0$. Let $p\in U_0(e\Gamma)\subset G/\Gamma$ be non-rational, $t_0\in\mathbb R_+$ a sufficiently large number and $v_0\in\Stab(a_{t_0}p)$ such that $t_0$ and $v_0$ satisfy $\gamma$-condition. Then there exists $n\in N_+$ such that $$\Ad n(v_0)\in\exp(\mathfrak g_{-2\alpha}).$$
\end{lemma}
\begin{proof}
Since $t_0$ and $v_0$ satisfy $\gamma$-condition, one has 
\begin{align}\label{neq}
d_G(v_0,e)\sim\eta(a_{t_0}p),\quad\eta(a_{t_0}p)\sim e^{-\gamma t_0}.
\end{align}
As $t_0\in\mathbb R_+$ is sufficiently large, by Remark \ref{rm}, $a_{t_0}p$ is near one of the cusps of $G/\Gamma$. In view of this, and by Theorem \ref{cthm2}, we could write $$a_{t_0}p=ka_sn_0\sigma\Gamma$$ for some $k\in K$, $a_s\in A$, $n_0\in V_0$, $\sigma\in\Sigma$ where $s$ is a sufficiently large number. By Lemma \ref{nl63}, we have 
$$v_0\in\Stab(a_{t_0}p)\cap B(\rho_0)\subset\Ad(ka_sn_0)(\sigma\Gamma\sigma^{-1}\cap N_-).$$
By Lemma \ref{nl64} and equation (\ref{neq}), this implies that $v_0$ must sit inside $$\Ad(ka_sn_0)(\sigma\Gamma\sigma^{-1}\cap\exp(\mathfrak g_{-2\alpha})).$$ In other words, we can find $n\in G$ such that 
\begin{equation}\label{ceq}
\text{Ad}n(v_0)\in\exp(\mathfrak g_{-2\alpha}).
\end{equation}
 By Lemma \ref{nl41}, we can assume that $n\in N_+\cup\{w\}$. By $\gamma$-condition, we have $v_0\notin N_+$. Hence by equation (\ref{ceq}), we get $n\in N_+$. This completes the proof of the lemma.
\end{proof}

\begin{proposition}\label{cp65}
 Let $p\in U_0(e\Gamma)\subset G/\Gamma$ be non-rational, $t_0\in\mathbb R_+$ a sufficiently large number and $v_0\in\Stab(a_{t_0}p)$ such that $t_0$ and $v_0$ satisfy $\gamma$-condition. Then there is a rational point $q\in U_0(e\Gamma)$ with $d(q)\sim e^{(2\alpha-\gamma){t_0}/2}$ (the implicit constant depends only on $G/\Gamma$) such that $$p\in B_{N_+}(Cd(q)^{-\frac{2\alpha}{2\alpha-\gamma}}, Cd(q)^{-\frac{4\alpha}{2\alpha-\gamma}})q$$ for some constant $C>0$ depending only on $G/\Gamma$ and $\gamma$.
\end{proposition}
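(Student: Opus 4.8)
The plan is to follow the proof of Proposition \ref{cp62}, using Proposition \ref{cp} where that argument used Proposition \ref{cp42}, and redoing the now‑longer computations.

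First I would fix, by Proposition \ref{cp}, an element $n\in N_+$ with $\Ad n(v_0)=\exp(z)$ for some $z\in\mathfrak g_{-2\alpha}$; note $z\neq0$ since $v_0\neq e$. Write $n^{-1}=\exp(u)$ with $u=u_\alpha+u_{2\alpha}$, $u_\alpha\in\mathfrak g_\alpha$, $u_{2\alpha}\in\mathfrak g_{2\alpha}$, and $v_0=\exp(v)$. Then $v=e^{\ad u}(z)=z+[u,z]+\frac12[u,[u,z]]+\frac16(\ad u)^3z+\frac1{24}(\ad u)^4z$, the series terminating because $\mathfrak g_{j\alpha}=0$ for $j\ge3$. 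Decomposing $v=v^{(-2\alpha)}+v^{(-\alpha)}+v^{(0)}+v^{(\alpha)}+v^{(2\alpha)}$ into $\mathfrak a$‑weight spaces, we read off $v^{(-2\alpha)}=z$, $v^{(-\alpha)}=[u_\alpha,z]$, $v^{(0)}=[u_{2\alpha},z]+\frac12[u_\alpha,[u_\alpha,z]]$, together with explicit (longer) expressions for $v^{(\alpha)}$ and $v^{(2\alpha)}$ as sums of iterated brackets of $u_\alpha,u_{2\alpha}$ with $z$. Since $d(v_0,e)_G=\eta(a_{t_0}p)=e^{-\gamma t_0}\to0$ we get $\|v\|_\mathfrak g\sim e^{-\gamma t_0}$, so every weight component of $v$, in particular $z=v^{(-2\alpha)}$, has $\mathfrak g$‑norm $\lesssim e^{-\gamma t_0}$.

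The key step — and the one I expect to be the main obstacle — is to show that $n$ is bounded by an absolute constant, i.e. $\|u_\alpha\|_\mathfrak g,\|u_{2\alpha}\|_\mathfrak g\lesssim1$. As in Proposition \ref{cp62}, this should be extracted from condition 2 of Proposition \ref{cp61}: from
$$\|\Ad(a_{-\delta})v\|_\mathfrak g^2\sim e^{4\alpha\delta}\|v^{(-2\alpha)}\|^2+e^{2\alpha\delta}\|v^{(-\alpha)}\|^2+\|v^{(0)}\|^2+e^{-2\alpha\delta}\|v^{(\alpha)}\|^2+e^{-4\alpha\delta}\|v^{(2\alpha)}\|^2$$
compared with $\|\Ad(a_{-\delta})v\|_\mathfrak g\gtrsim e^{-\gamma(t_0-\delta)}\sim e^{\gamma\delta}\|v\|_\mathfrak g$ for $\delta\in(0,\epsilon_0)$, the hypothesis $\gamma<2\alpha$ should force the $\mathfrak g_{-2\alpha}$‑part to carry a fixed proportion of $\|v\|$, i.e. $\|z\|\sim e^{-\gamma t_0}$; then Proposition \ref{cp41}, which gives $\|[u_\alpha,z]\|\sim\|u_\alpha\|\|z\|$, $\|[u_{2\alpha},z]\|\sim\|u_{2\alpha}\|\|z\|$ and, by iteration, every weight component of $v$ comparable to $\|z\|$ times a monomial in $\|u_\alpha\|,\|u_{2\alpha}\|$, turns $\|v^{(-\alpha)}\|,\|v^{(0)}\|\lesssim\|z\|$ into $\|u_\alpha\|,\|u_{2\alpha}\|\lesssim1$. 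The difficulty relative to Proposition \ref{cp62} is that the weight decomposition of $v$ now contains several cross terms (for instance $[u_{2\alpha},[u_\alpha,z]]$ and $[u_\alpha,[u_\alpha,[u_\alpha,z]]]$ both occur in $v^{(\alpha)}$), so one has to rule out cancellations before the monomial estimates can be applied. An alternative is to unwind the Siegel decomposition $a_{t_0}p=ka_sn_0\sigma\Gamma$ from the proof of Proposition \ref{cp}, where $v=e^{-2\alpha s}\Ad(k)(w_0)$ with $w_0$ in the fixed central lattice $\sigma\Gamma\sigma^{-1}\cap\exp(\mathfrak g_{-2\alpha})$ and $s\sim\frac{\gamma}{2\alpha}t_0$, and $n$ is the $N_+$‑factor in the Bruhat decomposition of $k^{-1}$, which remains bounded as $k$ runs over the compact group $K$.

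Granting that $n$ is bounded, the rest is routine and parallels Proposition \ref{cp62}. The point $na_{t_0}p$ is rational (since $\Ad n(v_0)=\exp(z)\in N_-$), hence so is $q:=a_{-t_0}na_{t_0}p$. Because $n$ is bounded, $\eta(na_{t_0}p)\sim\eta(a_{t_0}p)=e^{-\gamma t_0}$, so every nonzero element of the lattice $\Stab(na_{t_0}p)\cap\exp(\mathfrak g_{-2\alpha})$ has $\mathfrak g$‑norm $\gtrsim e^{-\gamma t_0}$, while $\exp(z)$ lies in this lattice with $\|z\|\lesssim e^{-\gamma t_0}$; hence its shortest vector has norm $\sim e^{-\gamma t_0}$. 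Conjugating by $a_{-t_0}$, which dilates $\mathfrak g_{-2\alpha}$ by $e^{2\alpha t_0}$, the shortest vector of $\Stab(q)\cap\exp(\mathfrak g_{-2\alpha})$ has norm $\sim e^{(2\alpha-\gamma)t_0}$, so Definition \ref{def55} (with $\beta=2\alpha$) gives $d(q)\sim e^{(2\alpha-\gamma)t_0/2}$. Finally $p=(a_{-t_0}n^{-1}a_{t_0})q$ and $a_{-t_0}n^{-1}a_{t_0}=\exp(e^{-\alpha t_0}u_\alpha+e^{-2\alpha t_0}u_{2\alpha})\in B_{N_+}(Ce^{-\alpha t_0},Ce^{-2\alpha t_0})$ for an absolute constant $C$; since this element is $O(e^{-\alpha t_0})$ (so $q\in U_0(e\Gamma)$ once $t_0$ is large) and $e^{-\alpha t_0}\sim d(q)^{-\frac{2\alpha}{2\alpha-\gamma}}$, $e^{-2\alpha t_0}\sim d(q)^{-\frac{4\alpha}{2\alpha-\gamma}}$, we obtain $p\in B_{N_+}(Cd(q)^{-\frac{2\alpha}{2\alpha-\gamma}},Cd(q)^{-\frac{4\alpha}{2\alpha-\gamma}})q$, as desired.
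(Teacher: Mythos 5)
Your setup (Proposition \ref{cp}, the expansion of $v=e^{\ad u}z$ into weight components, and the endgame computing $d(q)\sim e^{(2\alpha-\gamma)t_0/2}$ and the box containment once $n$ is bounded) matches the paper. But the step you yourself flag as the main obstacle is where the gap is, and neither of your two suggestions closes it. First, condition 2 of Proposition \ref{cp61} does \emph{not} force $\|z\|_{\mathfrak g}\sim e^{-\gamma t_0}$: the growth of $\|\Ad(a_{-\delta})v\|_{\mathfrak g}$ for small $\delta$ is driven by the $\mathfrak g_{-2\alpha}$ \emph{and} $\mathfrak g_{-\alpha}$ components, so all you can conclude is the dichotomy ``either $\|z\|_{\mathfrak g}\sim e^{-\gamma t_0}$ or $\|[u_1,z]\|_{\mathfrak g}\sim e^{-\gamma t_0}$'' (this is exactly what the paper extracts from (\ref{eqn62})). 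Since the proposition covers $0\leq\gamma<2\alpha$, in particular $\gamma<\alpha$, the second alternative is genuinely possible: the $\mathfrak g_{-\alpha}$-part can carry the mass while $\|z\|_{\mathfrak g}\ll e^{-\gamma t_0}$, and then your monomial estimate $\|u_1\|\|z\|\lesssim\|z\|$ gives no bound on $\|u_1\|$ at all. The paper handles precisely this case by rewriting the $\mathfrak g_\alpha$-component $c$ via Jacobi identities ($[u_1,[u_2,z]]=[u_2,[u_1,z]]$, so $c=-[u_1,[u_1,a]]/3+[u_1,b]=[u_1,[u_1,[u_1,z]]]/6+[u_2,a]$ with $a=[u_1,z]$, $b$ the $\mathfrak g_0$-part) and deducing $\|u_1\|^2\|a\|\ll\|u_1\|\|b\|+\|c\|$ and $\|u_2\|\|a\|\ll\|c\|+\|u_1\|^3\|z\|$, which is the ``ruling out cancellations'' work your proposal defers.

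Second, your fallback is false as stated: the $N_+$-factor in the Bruhat decomposition of $k^{-1}$ is \emph{not} uniformly bounded as $k$ runs over $K$ --- it blows up as $k$ approaches the complement of the open cell $N_+MAN_-$ (already visible in $\SL(2,\mathbb R)$, where the unipotent factor of a rotation degenerates as the angle approaches the critical value). Compactness of $K$ alone cannot bound $n$; the boundedness of $n$ is exactly the place where the minimality/first-time property of $v_0$ (condition 2) must enter, as in the paper's two-case argument. So the proposal is structurally on the right track but has a genuine gap at its central step.
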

\begin{proof}
By Lemma \ref{cp}, we know that there exists $n\in N_+$ such that $$\Ad(n)(v_0)\in\exp(\mathfrak g_{-2\alpha}).$$ Let $v_0=\exp(v)$ for some $v\in\mathfrak g$.  Then we have $$\Ad n(v)=z\in\mathfrak g_{-2\alpha}.$$ Now set $n^{-1}=\exp(u)$ for some $u=u_1+u_2$ with $u_1\in\mathfrak g_\alpha$ and $u_2\in\mathfrak g_{2\alpha}$. Then we have 
\begin{align}\label{eqn62}
z&+[u_1,z]+([u_2,z]+[u_1,[u_1,z]]/2)\nonumber\\
&+([u_1,[u_1,[u_1,z]]]/6+[u_1,[u_2,z]]/2+[u_2,[u_1,z]]/2)+\dots=v
\end{align}
where $$a=[u_1,z]\in\mathfrak g_{-\alpha},b=[u_2,z]+[u_1,[u_1,z]]/2\in\mathfrak g_0$$ 
$$c=[u_1,[u_1,[u_1,z]]]/6+[u_1,[u_2,z]]/2+[u_2,[u_1,z]]/2\in\mathfrak g_\alpha.$$ We also have the following identities for $c$
\begin{align}
c&=[u_1,[u_1,[u_1,z]]]/6+[u_2,[u_1,z]]\nonumber\\
&=[u_1,[u_1,[u_1,z]]]/6+[u_2,a]\label{deq}\\
&=[u_1,[u_1,[u_1,z]]]/6+[u_1,[u_2,z]]\nonumber\\
&=-[u_1,[u_1,[u_1,z]]]/3+[u_1,b]\nonumber\\
&=-[u_1,[u_1,a]]/3+[u_1,b].\label{deq1}
\end{align}
Since $\|v\|_{\mathfrak g}\sim d_G(v_0,e)\sim e^{-\gamma t_0}$, we have $$\|z\|_{\mathfrak g},\|a\|_{\mathfrak g},\|b\|_{\mathfrak g},\|c\|_{\mathfrak g}\ll e^{-\gamma t_0}.$$ By equation (\ref{eqn62}) and $\gamma$-condition, we have $\|z+a\|_{\mathfrak g}\sim e^{-\gamma t_0}$, and hence $$\text{either }\|z\|_{\mathfrak g}\sim e^{-\gamma t_0}\text{ or }\|a\|_{\mathfrak g}\sim e^{-\gamma t_0}.$$ 

If $\|z\|_{\mathfrak g}\sim e^{-\gamma t_0}$, then by Lemma \ref{cp41}, we have $$\|u_1\|_{\mathfrak g}\|z\|_{\mathfrak g}\ll\|a\|_{\mathfrak g}\ll e^{-\gamma t_0},\quad\|u_1\|_{\mathfrak g}\leq C_1$$ for some constant $C_1>0$, and $$\|u_2\|_{\mathfrak g}\|z\|_{\mathfrak g}\ll\|b\|_{\mathfrak g}+\|u_1\|_{\mathfrak g}^2\|z\|_{\mathfrak g}\ll e^{-\gamma t_0},\quad\|u_2\|_{\mathfrak g}\leq C_2$$ for some constant $C_2>0$. 

If $\|a\|_{\mathfrak g}\sim e^{-\gamma t_0}$, then by Lemma \ref{cp41} $$\|a\|_{\mathfrak g}\ll\|u_1\|_{\mathfrak g}\|z\|_{\mathfrak g}\ll\|u_1\|_{\mathfrak g}e^{-\gamma t_0},\quad\|u_1\|_{\mathfrak g}\geq C_3$$ for some constant $C_3>0$. By equation (\ref{deq1}) and Lemma \ref{cp41} $$\|u_1\|_{\mathfrak g}^2\|a\|_{\mathfrak g}\ll\|u_1\|_{\mathfrak g}\|b\|_{\mathfrak g}+\|c\|_{\mathfrak g},\quad \|u_1\|_{\mathfrak g}\ll1+\frac1{\|u_1\|_{\mathfrak g}}\leq C_4$$ for some constant $C_4>0$. By equation (\ref{deq}) and Lemma \ref{cp41} $$\|u_2\|_{\mathfrak g}\|a\|_{\mathfrak g}\ll\|c\|_{\mathfrak g}+ \|u_1\|_{\mathfrak g}^3\|z\|_{\mathfrak g},\quad\|u_2\|_{\mathfrak g}\leq C_5$$ for some constant $C_5>0$. Either case, we have that $u=u_1+u_2$ and $n$ are bounded.

Now since $\Ad(n)(v_0)\in\exp(\mathfrak g_{-2\alpha})$, by definition, $na_{t_0}p$ is rational and so is $a_{-t_0}na_{t_0}p$. Let $$q=(a_{-t_0}na_{t_0})p.$$ By the boundedness of $n$ and $\gamma$-condition, we know that $$\eta(na_{t_0}p)\sim\eta(a_{t_0}p)\sim e^{-\gamma t_0}.$$ Since $\Ad n(v_0)\in\Stab(na_{t_0}p)\cap\exp(\mathfrak g_{-2\alpha})$ and $d_G(v_0,e)\sim e^{-\gamma t_0}$, we also have an estimate for the denominator of $na_{t_0}p$ $$e^{-\gamma t_0}\sim\eta(na_{t_0}p)\ll d(na_{t_0}p)^2\ll d_G(\Ad n(v_0),e)\sim e^{-\gamma t_0}.$$ Hence the denominator of $q=a_{-t_0}na_{t_0}p$ is equal to  $$d(q)=e^{\alpha t_0}d(na_{t_0}p)\sim e^{\alpha t_0}\eta(na_{t_0}p)^\frac12\sim e^{(2\alpha-\gamma)t_0/2}.$$ This implies that $$a_{-t_0}na_{t_0}\in B_{N_+}(C_6e^{-\alpha t_0}, C_6e^{-2\alpha t_0})\subset B_{N_+}(Cd(q)^{-\frac{2\alpha}{2\alpha-\gamma}}, Cd(q)^{-\frac{4\alpha}{2\alpha-\gamma}})$$ and $$p=(a_{-t_0}na_{t_0})^{-1}q\in B_{N_+}(Cd(q)^{-\frac{2\alpha}{2\alpha-\gamma}}, Cd(q)^{-\frac{4\alpha}{2\alpha-\gamma}})q$$ for some $C_6, C>0$. This completes the proof of the proposition.
\end{proof}

\begin{proposition}\label{cp66}
 Let $p\in U_0(e\Gamma)$ be a non-rational point. If $p$ is not Diophantine of type $\gamma$, then there exist a constant $C>0$ depending only on $G/\Gamma$ and $\gamma$, and a sequence $q_n\in U(e\Gamma)$ of distinct rational points with $d(q_n)\to\infty$ such that $$p\in B_{N_+}(Cd(q_n)^{-\frac{2\alpha}{2\alpha-\gamma}}, Cd(q_n)^{-\frac{4\alpha}{2\alpha-\gamma}})q_n.$$
\end{proposition}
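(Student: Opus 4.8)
The plan is to follow the same pattern as the proof of Proposition \ref{cp63} in the case $\mathfrak g_{2\alpha}=\emptyset$, now invoking the analysis of the case $\mathfrak g_{2\alpha}\neq\emptyset$ carried out in Propositions \ref{cp} and \ref{cp65}. First I would apply Proposition \ref{cp61}: since $p$ is not Diophantine of type $\gamma$, there is a sequence $t_n\to\infty$ and elements $v_n\in\Stab(a_{t_n}p)$, each unipotent, with $d(v_n,e)_G=\eta(a_{t_n}p)=e^{-\gamma t_n}$, and moreover each pair $(t_n,v_n)$ satisfies condition 2 in Proposition \ref{cp61}. Because $p$ is assumed non-rational, the orbit $\{a_tp\}$ is not divergent (Proposition \ref{cp51}), so such a sequence genuinely exists with $t_n\to\infty$.

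Next, for each sufficiently large $t_n$ I would feed the pair $(t_n,v_n)$ into Proposition \ref{cp65}. That proposition produces, for each $n$, a rational point $q_n\in U_0(e\Gamma)$ with $d(q_n)\sim e^{(2\alpha-\gamma)t_n/2}$ (absolute implicit constant) such that
$$p\in B_{N_+}\!\left(Cd(q_n)^{-\frac{2\alpha}{2\alpha-\gamma}},\,Cd(q_n)^{-\frac{4\alpha}{2\alpha-\gamma}}\right)q_n$$
for an absolute constant $C>0$. Since $0\leq\gamma<2\alpha$ we have $2\alpha-\gamma>0$, so $d(q_n)\sim e^{(2\alpha-\gamma)t_n/2}\to\infty$ as $t_n\to\infty$; in particular the heights $d(q_n)$ are unbounded, which forces the $q_n$ to contain infinitely many distinct points. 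Passing to a subsequence I may assume the $q_n$ are pairwise distinct (or, more carefully, discard repetitions: only finitely many $n$ can share a given value of $d(q_n)$ up to the fixed implicit constant, since $S(U_0(e\Gamma),C/2,C)$ is finite by Proposition \ref{cp56}, so infinitely many distinct $q_n$ remain). This yields exactly the asserted sequence.

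The only subtlety — and the step I would be most careful about — is ensuring that the $q_n$ can be taken \emph{distinct} and with $d(q_n)\to\infty$ rather than merely unbounded; this is where the finiteness statement in Proposition \ref{cp56} (equivalently Propositions \ref{cp53}, \ref{cp54}, \ref{cp55}) is used, exactly as in the proof of Proposition \ref{cp63}. Everything else is a direct concatenation of Proposition \ref{cp61} and Proposition \ref{cp65}, with no new computation required. I would write the proof in two sentences: cite Proposition \ref{cp61} to get $(t_n,v_n)$, then cite Proposition \ref{cp65} to get $q_n$ with the stated box containment and $d(q_n)\sim e^{(2\alpha-\gamma)t_n/2}\to\infty$, extracting a subsequence of distinct points via Proposition \ref{cp56}.
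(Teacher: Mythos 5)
Your proposal is correct and matches the paper's argument: the paper simply says the proof is "similar to Proposition \ref{cp63}", which is exactly the concatenation of Proposition \ref{cp61} with Proposition \ref{cp65} that you describe, and your extra care about extracting distinct $q_n$ via the finiteness in Proposition \ref{cp56} only makes explicit a point the paper leaves implicit.
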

\begin{proof}
It is similar to Proposition \ref{cp63}.
\end{proof}

\begin{proposition}\label{cp67}
 Let $p\in U_0(e\Gamma)$ and let $\epsilon>0$ be a sufficiently small number. If there exist a constant $C>0$ and a sequence $q_n\in U_0(e\Gamma)$ of distinct rational points with $d(q_n)\to\infty$ such that $$p\in B_{N_+}(Cd(q_n)^{-\frac{2\alpha}{2\alpha-(\gamma+\epsilon)}}, Cd(q_n)^{-\frac{4\alpha}{2\alpha-(\gamma+\epsilon)}})q_n,$$ then $p$ is not Diophantine of type $\gamma$.
\end{proposition}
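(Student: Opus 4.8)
The plan is to run the argument of Proposition \ref{cp64} in parallel, the one structural change being that in the case $\mathfrak g_{2\alpha}\neq\emptyset$ the dominant contraction at the cusp occurs along $\mathfrak g_{-2\alpha}$ rather than $\mathfrak g_{-\alpha}$; this is precisely what forces the exponent $\alpha/\beta$ with $\beta=2\alpha$ in Definition \ref{def55} and the two distinct scales of the box. First I would set
$$t_n=\ln d(q_n)^{\frac{2}{2\alpha-(\gamma+\epsilon)}},$$
so that $t_n\to\infty$ since $d(q_n)\to\infty$ and $\gamma+\epsilon<2\alpha$ for $\epsilon$ small. With this choice $e^{\alpha t_n}=d(q_n)^{\frac{2\alpha}{2\alpha-(\gamma+\epsilon)}}$ and $e^{2\alpha t_n}=d(q_n)^{\frac{4\alpha}{2\alpha-(\gamma+\epsilon)}}$, and since $\Ad(a_{t_n})$ expands $\mathfrak g_\alpha$ by $e^{\alpha t_n}$ and $\mathfrak g_{2\alpha}$ by $e^{2\alpha t_n}$, it carries the box
$$B_{N_+}\Bigl(Cd(q_n)^{-\frac{2\alpha}{2\alpha-(\gamma+\epsilon)}},\,Cd(q_n)^{-\frac{4\alpha}{2\alpha-(\gamma+\epsilon)}}\Bigr)$$
onto $B_{N_+}(C,C)$, a fixed bounded neighborhood of $e$ in $N_+$. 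Writing $p=b_nq_n$ with $b_n$ in that box, we get $a_{t_n}p=\bigl(a_{t_n}b_na_{-t_n}\bigr)a_{t_n}q_n$ with $a_{t_n}b_na_{-t_n}\in B_{N_+}(C,C)$, hence $d(a_{t_n}q_n,a_{t_n}p)_{G/\Gamma}\leq C'$ for a constant $C'$ depending only on $C$ and $G$.

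Next I would exhibit a short lattice vector at time $t_n$. By Proposition \ref{cp43} the group $\Stab(q_n)\cap\exp(\mathfrak g_{-2\alpha})$ is a lattice in the abelian group $\exp(\mathfrak g_{-2\alpha})$, so the infimum defining $d(q_n)$ in Definition \ref{def55} is attained: choose $v_n=\exp(w_n)$ with $w_n\in\mathfrak g_{-2\alpha}\setminus\{0\}$, $v_n\in\Stab(q_n)$, and $\|w_n\|_{\mathfrak g}=d(q_n)^2$. Since $\Ad(a_{t_n})$ contracts $\mathfrak g_{-2\alpha}$ by $e^{-2\alpha t_n}$, the element $a_{t_n}v_na_{-t_n}=\exp(e^{-2\alpha t_n}w_n)$ lies in $\Stab(a_{t_n}q_n)$ and
$$d\bigl(a_{t_n}v_na_{-t_n},e\bigr)_G\sim e^{-2\alpha t_n}\|w_n\|_{\mathfrak g}=e^{-2\alpha t_n}d(q_n)^2=e^{-(\gamma+\epsilon)t_n},$$
the last equality being exactly the defining relation of $t_n$. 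Therefore $\eta(a_{t_n}q_n)\ll e^{-(\gamma+\epsilon)t_n}$.

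Finally I would combine the two estimates. Since $\eta$ changes by at most a bounded multiplicative factor under translation by an element lying in a fixed bounded subset of $G$ (the norm on $\mathfrak g$ is distorted by a bounded factor under the adjoint action of such elements, so the lengths of the short stabilizer vectors are), the bound $d(a_{t_n}q_n,a_{t_n}p)_{G/\Gamma}\leq C'$ from the first step yields $\eta(a_{t_n}p)\sim\eta(a_{t_n}q_n)\ll e^{-(\gamma+\epsilon)t_n}$. Hence $\eta(a_{t_n}p)\,e^{\gamma t_n}\ll e^{-\epsilon t_n}\to 0$, so for every $C_0>0$ one has $\eta(a_{t_n}p)<C_0e^{-\gamma t_n}$ for all large $n$; as $t_n\to\infty$, no constant can serve in Definition \ref{def12}, and therefore $p$ is not Diophantine of type $\gamma$.

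The only step requiring genuine care is the bookkeeping of the two scales in the first paragraph: one must check that a single time $t_n$ simultaneously normalizes the $\mathfrak g_\alpha$-length and the $\mathfrak g_{2\alpha}$-length of the box to a constant, which is the reason the two exponents in the hypothesis stand in the ratio $1:2$, matching the weights of $\{a_t\}$ on $\mathfrak g_\alpha$ and $\mathfrak g_{2\alpha}$. Everything else is formally identical to the proof of Proposition \ref{cp64}.
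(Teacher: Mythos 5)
Your proposal is correct and follows essentially the same route as the paper: the paper's proof consists precisely of choosing $t_n=\ln d(q_n)^{\frac{2}{2\alpha-(\gamma+\epsilon)}}$ and then repeating the argument of Proposition \ref{cp64} (bounded displacement $d(a_{t_n}q_n,a_{t_n}p)_{G/\Gamma}\leq C$ from conjugating the box by $a_{t_n}$, the estimate $\eta(a_{t_n}q_n)\ll d(q_n)^2e^{-2\alpha t_n}=e^{-(\gamma+\epsilon)t_n}$ via the short stabilizer vector in $\exp(\mathfrak g_{-2\alpha})$, and comparability of $\eta$ under bounded translations). You have merely written out explicitly the steps the paper leaves implicit, so no gap remains.
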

\begin{proof}
Let $t_n=\ln d(q_n)^{\frac2{2\alpha-(\gamma+\epsilon)}}$. Then the proof is identical to that of Proposition \ref{cp64}.
\end{proof}

\section{Proof of Theorem \ref{ctheorem}}\label{pf1}
In this section, we prove Theorem \ref{ctheorem}.

\begin{lemma}\label{nl7}
Let $B(\rho_0)$ be the neighborhood of $e$ in $G$ defined as in subsection \ref{dp1}. Suppose that there are rational points $q_1, q_2$ in $U_0(e\Gamma)$ and $n\in N_+$ such that $nq_1=q_2$ and $$d(q_1),d(q_2)\leq\rho_0/2,\;\Ad n(B(\rho_0/2))\subset B(\rho_0).$$ Then $n=e$ and $q_1=q_2$.
\end{lemma}
\begin{proof}
Since $B(\rho_0)$ is a Zassenhaus neighborhood and $d(q_1),d(q_2)\leq\rho_0/2$, there are connected nilpotent subgroups $F_1,F_2\subset G$ such that $$\{e\}\neq\Stab(q_i)\cap B(\rho_0)\subset F_i,\; i=1,2.$$ By Corollary 11.18 in \cite{R},  every element in $\Stab(q_i)\cap B(\rho_0)$ $(i=1,2)$ is unipotent. Hence we could assume that each $F_i$ $(i=1,2)$ is a unipotent subgroup, and contained in a conjugate of $N_-$. By enlarging $F_i$, we could further assume that each $F_i$ is a conjugate of $N_-$ $$F_i=\Ad(g_i)N_-,\; i=1,2.$$

Now since $$\Ad(n)(\Stab(q_1)\cap B(\rho_0/2))\subset \Stab(q_2)\cap B(\rho_0),$$ we have 
\begin{align}\label{eqn0}
\Ad n(F_1)\cap F_2\neq \{e\}.
\end{align}
As $d(q_1), d(q_2)\leq\rho_0/2$, there are nontrivial elements $$w_i\in\Stab(q_i)\cap N_-\cap B(\rho_0),\;i=1,2.$$ This implies that $$w_i\in N_-\cap F_i=N_-\cap\Ad(g_i)N_-,\; i=1,2.$$ By Lemma 3.4 in \cite{BZ}, we have $$F_1=F_2=N_-.$$ By equation (\ref{eqn0}), we get $\Ad(n)N_-\cap N_-\neq\{e\}$. Since $n\in N_+$, by Lemma \ref{cp40}, we conclude that $n=e$.
\end{proof}

\begin{proposition}\label{cp71}
Suppose $\mathfrak g_{2\alpha}=0$ and let $0<\gamma<\alpha$. There exist $r_0$ and $C_0>0$ such that for a rational point $q\in U_0(e\Gamma)$ and for any sufficiently large $l>0$, the open box $$B_{N_+}(r_0d(q)^{-\frac\alpha{\alpha-\gamma}})q$$ contains at least $$C_0l^{\dim\mathfrak g_\alpha}\mu_{N_+}(B_{N_+}(r_0d(q)^{-\frac\alpha{\alpha-\gamma}})q)$$ disjoint open boxes of the form $B_{N_+}(r_0d(\tilde q)^{-\frac\alpha{\alpha-\gamma}})\tilde q$ where $\tilde q$'s are rational points with denominators between $l$ and $2l$.
\end{proposition}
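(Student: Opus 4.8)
The plan is to realise the required boxes as the sub-boxes attached to rational points of denominator between $l$ and $2l$ that lie in a slightly shrunk copy of $U:=B_{N_+}(r_0 d(q)^{-\frac{\alpha}{\alpha-\gamma}})q$; the two ingredients are the point count of Proposition \ref{cp56} and a separation estimate for rational points of comparable denominator. Write $\rho=\frac{\alpha}{\alpha-\gamma}$, and note that $\rho\ge1$ because $0\le\gamma<\alpha$. Since $\mathfrak g_{2\alpha}=\emptyset$ the group $N_+$ is abelian, so every $B_{N_+}(r)$ is a genuine cube with $\mu_{N_+}(B_{N_+}(r))\sim r^{\dim\mathfrak g_\alpha}$, and the sub-box of a rational point $\tilde q$ with $d(\tilde q)\in[l,2l]$ has half-side $r_0 d(\tilde q)^{-\rho}\le r_0 l^{-\rho}\le r_0 l^{-1}$; the comparison $l^{-\rho}\le l^{-1}$ (which is exactly $\rho\ge1$) will be used repeatedly.

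First I would count. Let $U'$ be the cube $U$ shrunk concentrically so that $\partial U'$ lies at distance $r_0 l^{-1}$ from $\partial U$; when $l$ is large relative to $d(q)$ this margin is far smaller than the side $r_0 d(q)^{-\rho}$ of $U$, so $\mu_{N_+}(U')\ge 2^{-\dim\mathfrak g_\alpha}\mu_{N_+}(U)$. Applying part (1) of Proposition \ref{cp56} to the cube $U'$ with $C=2l$ gives, for all sufficiently large $l$, that $S(U'(e\Gamma),l,2l)$ is finite with $|S(U'(e\Gamma),l,2l)|\sim\mu_{N_+}(U')(2l)^{\dim\mathfrak g_\alpha}\gg\mu_{N_+}(U)\,l^{\dim\mathfrak g_\alpha}$, the implied constants depending only on $G$ and $\Gamma$. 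Each $\tilde q\in S(U'(e\Gamma),l,2l)$ is rational with $d(\tilde q)\in[l,2l]$ and lies in $U'$, so --- its sub-box having half-side at most $r_0 l^{-1}$, equal to the shrinking margin --- that sub-box is contained in $U$.

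The heart of the matter is disjointness: I would prove that there is $\epsilon_0>0$, depending only on $G$ and $\Gamma$, such that if $\tilde q_1,\tilde q_2\in U_0(e\Gamma)$ are rational with $d(\tilde q_i)\in[l,2l]$ and $\tilde q_2\in B_{N_+}(\epsilon_0 l^{-1})\tilde q_1$, then $\tilde q_1=\tilde q_2$. To prove this, write $\tilde q_i=m_i a_{u_i}\bar n_i\sigma_i\Gamma$ via Proposition \ref{cp51}, where $e^{-\alpha u_i}=d_{\sigma_i}(\tilde q_i)\sim d(\tilde q_i)\in[l,2l]$ by Proposition \ref{cp55} and $\bar n_i$ is taken in a fixed compact fundamental domain for $\sigma_i\Gamma\sigma_i^{-1}\cap N_-$ in $N_-$ (Theorem \ref{cthm2}); in particular $|u_1-u_2|$ is bounded by an absolute constant. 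Put $T=-u_1+s_0$, so that $e^{\alpha T}\sim l$ and, after enlarging the compact set $V$ and shifting $T$ by a bounded amount, $a_T\tilde q_1\in\Omega(s_0,V)\sigma_1\Gamma$ and $a_T\tilde q_2\in\Omega(s_1,V)\sigma_2\Gamma$ with $s_1\in(0,s_0)$ as in Theorem \ref{cthm2}. If $\tilde q_2=n\tilde q_1$ with $n\in N_+$ and $\|\log n\|_{\mathfrak g}<\epsilon_0 l^{-1}$, then $a_T\tilde q_2=(a_T n a_T^{-1})(a_T\tilde q_1)$ with $a_T n a_T^{-1}\in N_+$ of norm $e^{\alpha T}\|\log n\|_{\mathfrak g}\ll\epsilon_0$; for $\epsilon_0$ below the relevant injectivity scale of these Siegel sets this forces $\Omega(s_0,V)\sigma_1\gamma\cap\Omega(s_1,V)\sigma_2\neq\emptyset$ for some $\gamma\in\Gamma$, whence by Theorem \ref{cthm2} $\sigma_1=\sigma_2=:\sigma$ and $\sigma\gamma\sigma^{-1}\in(K\cap Z)N_-\subset P=MAN_-$. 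Substituting back, $a_T n a_T^{-1}=m_2 a_{u_2+T}\bar n_2\,g_0\,\bar n_1^{-1}a_{u_1+T}^{-1}m_1^{-1}$ for some $g_0\in P=MAN_-$, so $a_T n a_T^{-1}\in MAN_-$; since $N_+\cap MAN_-=\{e\}$ by the Bruhat decomposition, $a_T n a_T^{-1}=e$, i.e. $\tilde q_1=\tilde q_2$, as claimed. Now fix $r_0$ once and for all so small that $2r_0<\epsilon_0$ (and also small enough that $B_{N_+}(r_0 d(q)^{-\rho})q\subset U_0$). Then for distinct $\tilde q_1,\tilde q_2\in S(U'(e\Gamma),l,2l)$ we have $\tilde q_2\notin B_{N_+}(\epsilon_0 l^{-1})\tilde q_1$, while the two sub-boxes have half-sides at most $r_0 l^{-1}$ with $2r_0 l^{-1}<\epsilon_0 l^{-1}$, so the sub-boxes are disjoint.

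Assembling the pieces, the cube $U$ contains at least $|S(U'(e\Gamma),l,2l)|\gg\mu_{N_+}(U)\,l^{\dim\mathfrak g_\alpha}=l^{\dim\mathfrak g_\alpha}\,\mu_{N_+}\big(B_{N_+}(r_0 d(q)^{-\frac{\alpha}{\alpha-\gamma}})q\big)$ pairwise disjoint sub-boxes of the required form $B_{N_+}(r_0 d(\tilde q)^{-\frac{\alpha}{\alpha-\gamma}})\tilde q$ with $d(\tilde q)\in[l,2l]$, which is the proposition with $C_0$ the implied constant. The one genuinely delicate step is the separation estimate of the third paragraph: there the algebraic rigidity of Section 4 --- concretely the identity $N_+\cap MAN_-=\{e\}$ --- must be combined with the reduction theory of the cusps (Theorem \ref{cthm2}) to prevent rational points of comparable denominator from clustering below the scale $l^{-1}$; the shrinking bookkeeping and the elementary comparison $l^{-\rho}\le l^{-1}$ are routine.
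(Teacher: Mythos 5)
Your proposal is correct in substance, but the heart of it — the separation estimate — runs along a genuinely different line from the paper. The paper proves disjointness by flowing with $a_{t_0}$, $t_0=\ln(l/r_0)/\alpha$, so that the two rational points $q_1,q_2$ both acquire denominator $\sim r_0$; it then matches the short vectors of the two stabilizer lattices inside $\mathfrak n_-$ under $\Ad(a_{t_0}na_{-t_0})$ (a bounded element of $N_+$), deduces $\Ad(a_{t_0}na_{-t_0})\mathfrak n_-\cap\mathfrak n_-\neq\{0\}$, and concludes $a_{t_0}na_{-t_0}=e$ from Proposition \ref{cp40} together with the Bruhat decomposition. You instead flow both points to the fixed Siegel level $s_0$, observe that the conjugated displacement $a_Tna_{-T}\in N_+$ has norm $\ll\epsilon_0$, and invoke part (4) of Theorem \ref{cthm2} to force $\sigma_1=\sigma_2$ and $\sigma\gamma\sigma^{-1}\in(K\cap Z)N_-$, finishing with the same identity $N_+\cap MAN_-=\{e\}$. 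Both routes hinge on the same rescaling by $e^{\alpha T}\sim l$ and the same Bruhat-type rigidity, but the key auxiliary input differs: Proposition \ref{cp40} plus the (tersely justified, in the paper) claim that the nearby stabilizers' short vectors both lie in $\mathfrak n_-$, versus the Garland--Raghunathan reduction theory, which makes the cusp bookkeeping explicit at the cost of a uniform-continuity argument for the $KAN_-$ coordinates near a compact set (your ``injectivity scale of the Siegel sets'' step, which you assert rather than prove, but which is routine since $m_1a_{s_0+O(1)}\bar n_1$ ranges in a compact set). Two further remarks: your concentric shrinking of $U$ to $U'$ to guarantee that the sub-boxes are actually contained in the big box is a point the paper's proof glosses over, and is needed for the tree-like structure later; and note that your separation threshold $\epsilon_0 l^{-1}$ is exactly the right scale — for $\gamma>0$ it is even stronger than what the intersecting boxes give ($\ll l^{-\alpha/(\alpha-\gamma)}$), while for $\gamma=0$ your requirement $2r_0<\epsilon_0$ is what makes the argument close, mirroring the paper's choice of $r_0$ sufficiently small.
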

\begin{proof}
We fix a sufficiently small $r_0>0$. By Proposition \ref{cp56}, for the open box $$B_{N_+}(r_0d(q)^{-\frac\alpha{\alpha-\gamma}})q,$$ there exists a large constant $L_q>0$ such that for any $l>L_q$ there are at least $$C_0l^{\dim\mathfrak g_\alpha}\mu_{N_+}(B_{N_+}(r_0d(q)^{-\frac\alpha{\alpha-\gamma}})q)$$ rational points in $B_{N_+}(r_0d(q)^{-\frac\alpha{\alpha-\gamma}})q$ with denominators between $l$ and $2l$, where the constant $C_0>0$ depending only on $G/\Gamma$. For each such rational point $\tilde q$, we construct an open box $$B_{N_+}(r_0d(\tilde q)^{-\frac\alpha{\alpha-\gamma}})\tilde q$$ around $\tilde q$. To prove the proposition, we only need to show that these open boxes are disjoint. 

Let $q_1$ and $q_2$ be two rational points in $B_{N_+}(r_0d(q)^{-\frac\alpha{\alpha-\gamma}})q$ with denominators lying between $l$ and $2l$. Suppose that $$B_{N_+}(r_0d(q_1)^{-\frac\alpha{\alpha-\gamma}})q_1\textup{ and }B_{N_+}(r_0d(q_2)^{-\frac\alpha{\alpha-\gamma}})q_2$$ are not disjoint, where $l\leq d(q_1),d(q_2)\leq2l$. Then there exists an element $n\in N_+$ such that $$nq_1=q_2,\quad n\in B_{N_+}(r_0d(q_1)^{-\frac\alpha{\alpha-\gamma}}+r_0d(q_2)^{-\frac\alpha{\alpha-\gamma}}).$$ By applying $a_{t_0}$ with $t_0=\ln (l/r_0)/\alpha$ on the equation $nq_1=q_2$, we have 
\begin{equation}\label{ceq0}
(a_{t_0} na_{-t_0})a_{t_0}q_1=a_{t_0}q_2
\end{equation}
and by calculations $$d(a_{t_0}q_1)\sim d(a_{t_0}q_2)\sim r_0\text{ and }d_G(a_{t_0} na_{-t_0},e)\ll l^{-\frac\gamma{\alpha-\gamma}}.$$ Since $r_0$ is sufficiently small and $l$ is sufficiently large, by Lemma \ref{nl7} and equation (\ref{ceq0}), we have $q_1=q_2$. This completes the proof of the proposition.
\end{proof}

\begin{proposition}\label{cp72}
Suppose $\mathfrak g_{2\alpha}\neq0$ and $0<\gamma<2\alpha$. There exist $r_0$ and $C_0>0$ such that for an open box $U(e\Gamma)\subset U_0(e\Gamma)$ and for any sufficiently large $l>0$ the open box $U(e\Gamma)$ contains at least $$C_0l^{\dim\mathfrak g_\alpha+2\dim\mathfrak g_{2\alpha}}\mu_{N_+}(U)$$ disjoint open boxes of the form $B_{N_+}(r_0d(q)^{-\frac{2\alpha}{2\alpha-\gamma}}, r_0d(q)^{-\frac{4\alpha}{2\alpha-\gamma}})q$ where $q$'s are rational points with denominators between $l$ and $2l$.
\end{proposition}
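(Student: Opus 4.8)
The plan is to mirror the argument of Proposition \ref{cp71}, but with the counting input coming from Proposition \ref{cp56}(2) instead of (1), and with the boxes now anisotropic (side $r_0 d(q)^{-\frac{2\alpha}{2\alpha-\gamma}}$ in the $\mathfrak g_\alpha$-direction and $r_0 d(q)^{-\frac{4\alpha}{2\alpha-\gamma}}$ in the $\mathfrak g_{2\alpha}$-direction). First I would fix $r_0>0$ sufficiently small. Given the open box $U(e\Gamma)\subset U_0(e\Gamma)$ and a large $l>0$, apply Proposition \ref{cp56}(2) to $U$: for $l$ large enough there are $\sim \mu_{N_+}(U)\, l^{\dim\mathfrak g_\alpha+2\dim\mathfrak g_{2\alpha}}$ rational points $q\in U(e\Gamma)$ with $d(q)\in[l,2l]$, so in particular at least $C_0\, l^{\dim\mathfrak g_\alpha+2\dim\mathfrak g_{2\alpha}}\mu_{N_+}(U)$ of them for a suitable absolute $C_0>0$. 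Around each such $q$ form the box $B_{N_+}(r_0 d(q)^{-\frac{2\alpha}{2\alpha-\gamma}}, r_0 d(q)^{-\frac{4\alpha}{2\alpha-\gamma}})q$; since $d(q)\in[l,2l]$ these all have comparable dimensions, roughly $r_0 l^{-\frac{2\alpha}{2\alpha-\gamma}}$ by $r_0 l^{-\frac{4\alpha}{2\alpha-\gamma}}$, and (for $r_0$ small) they are contained in $U(e\Gamma)$ up to shrinking $U$ slightly; it remains to prove they are pairwise disjoint.

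For disjointness I would argue by contradiction exactly as in Proposition \ref{cp71}. Suppose the boxes around two such points $q_1,q_2$ meet; then there is $n\in N_+$ with $nq_1=q_2$ and $n$ lying in a box of size comparable to $r_0 l^{-\frac{2\alpha}{2\alpha-\gamma}}$ by $r_0 l^{-\frac{4\alpha}{2\alpha-\gamma}}$. Now renormalize: choose $t_0$ with $e^{\alpha t_0}\sim l/r_0$ and apply $a_{t_0}$, so that $(a_{t_0}na_{-t_0})\,a_{t_0}q_1 = a_{t_0}q_2$. By the denominator renormalization (as in the proof of Proposition \ref{cp65}, using $d(q)=e^{\alpha t_0}\eta(\cdot)^{1/2}$), the points $a_{t_0}q_1, a_{t_0}q_2$ are $\sigma$-rational with $\sigma$-denominators comparable to $r_0$ — hence bounded away from $0$ — so each has a nonzero element $u_i\in\Stab(a_{t_0}q_i)\cap(\mathfrak g_{-2\alpha}\setminus\{0\})$ of norm bounded below; moreover $a_{t_0}na_{-t_0}$ stays in a bounded neighborhood of $e$ in $N_+$. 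Conjugating $u_1$ by $a_{t_0}na_{-t_0}$ lands (after projecting to the lowest weight piece, exactly as in the computation in Proposition \ref{cp65}) on an element whose $\mathfrak g_{-2\alpha}$-component is a nonzero vector close to $u_1$ and which lies in $\Stab(a_{t_0}q_2)$; comparing it with $u_2$ and using $\Stab(a_{t_0}q_2)\cap\exp(\mathfrak g_{-2\alpha})$ is a lattice (Proposition \ref{cp43}) forces $\Ad(a_{t_0}na_{-t_0})\mathfrak n_-\cap\mathfrak n_-\neq\{0\}$. By Proposition \ref{cp40} this gives $a_{t_0}na_{-t_0}\in MAN_-$, so $a_{t_0}na_{-t_0}=e$, hence $n=e$ and $q_1=q_2$, contradicting distinctness.

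The one place requiring care — and the main obstacle — is the disjointness step in the anisotropic setting: in Proposition \ref{cp71} with $\mathfrak g_{2\alpha}=\emptyset$ one only needs that $n$ conjugates back to something bounded and applies Proposition \ref{cp40} directly to $n$, but here $n\in N_+$ has two components ($u_1\in\mathfrak g_\alpha$, $u_2\in\mathfrak g_{2\alpha}$) scaling at different rates ($e^{-\alpha t_0}$ versus $e^{-2\alpha t_0}$), and the stabilizer element $u_i$ one tracks lives in $\mathfrak g_{-2\alpha}$ rather than all of $\mathfrak n_-$. I therefore need to verify that $a_{t_0}na_{-t_0}$ is genuinely bounded — which it is, since the $\mathfrak g_\alpha$-component scales by $e^{\alpha t_0}\sim l/r_0$ against a size $r_0 l^{-\frac{2\alpha}{2\alpha-\gamma}}$ giving $O(l^{-\frac{\gamma}{2\alpha-\gamma}})\to 0$, and the $\mathfrak g_{2\alpha}$-component scales by $e^{2\alpha t_0}\sim (l/r_0)^2$ against size $r_0 l^{-\frac{4\alpha}{2\alpha-\gamma}}$, again $O(l^{-\frac{2\gamma}{2\alpha-\gamma}})\to 0$ — and that the intersection $\Ad(a_{t_0}na_{-t_0})\mathfrak n_-\cap\mathfrak n_-$ is nontrivial, which follows once the tracked vectors $u_1,u_2\in\mathfrak g_{-2\alpha}$ are pinned down by the lattice-discreteness of $\Stab(\cdot)\cap\exp(\mathfrak g_{-2\alpha})$ together with the boundedness just established. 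Once these renormalization estimates are in hand, Proposition \ref{cp40} closes the argument exactly as before, and the count $C_0 l^{\dim\mathfrak g_\alpha+2\dim\mathfrak g_{2\alpha}}\mu_{N_+}(U)$ of disjoint sub-boxes is delivered.
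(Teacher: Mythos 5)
Your proposal follows essentially the same route as the paper's proof: count rational points with $d(q)$ between $l$ and $2l$ via Proposition \ref{cp56}, surround each by the anisotropic box, and establish disjointness by renormalizing with $a_{t_0}$ (the paper takes $e^{\alpha t_0}=l/r_0^{1/2}$, you take $e^{\alpha t_0}\sim l/r_0$ — immaterial), checking that both graded components of $a_{t_0}na_{-t_0}$ tend to $e$, deducing $\Ad(a_{t_0}na_{-t_0})\mathfrak n_-\cap\mathfrak n_-\neq\{0\}$ from small stabilizer elements, and concluding $n=e$ from Proposition \ref{cp40} together with the Bruhat decomposition. The paper disposes of this last step by saying "proceed as in Proposition \ref{cp71}", and your slightly more explicit tracking of stabilizer elements in $\mathfrak g_{-2\alpha}$ (via Proposition \ref{cp43}) is just a spelled-out version of the same argument.
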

\begin{proof}
We fix a sufficiently small $r_0>0$. By Proposition \ref{cp56}, for any $U(e\Gamma)$, there exists a large constant $L>0$ such that $l>L$ there are at least $$C_0l^{\dim\mathfrak g_\alpha+2\dim\mathfrak g_{2\alpha}}\mu_{N_+}(U)$$ rational points in $U(e\Gamma)$ with denominators between $l$ and $2l$ for some constant $C_0>0$ depending only on $G/\Gamma$. For each such rational point $q$, we construct an open box $$B_{N_+}(r_0d(q)^{-\frac{2\alpha}{2\alpha-\gamma}}, r_0d(q)^{-\frac{4\alpha}{2\alpha-\gamma}})q$$ around $q$. To prove the proposition, we only need to show that these open boxes are disjoint. 

Let $q_1$ and $q_2$ be two rational points in $U(e\Gamma)$ with denominators lying between $l$ and $2l$. Suppose that $$B_{N_+}(r_0d(q_1)^{-\frac{2\alpha}{2\alpha-\gamma}}, r_0d(q_1)^{-\frac{4\alpha}{2\alpha-\gamma}})q_1\textup{ and }B_{N_+}(r_0d(q_2)^{-\frac{2\alpha}{2\alpha-\gamma}}, r_0d(q_2)^{-\frac{4\alpha}{2\alpha-\gamma}})q_2$$ are not disjoint, where $l\leq d(q_1),d(q_2)\leq2l$. Then there exists an element $n\in N_+$ such that $nq_1=q_2$ and 
\begin{align*}
n\in B_{N_+}(r_0d(q_1)^{-\frac{2\alpha}{2\alpha-\gamma}}&+r_0d(q_2)^{-\frac{2\alpha}{2\alpha-\gamma}},\\
&r_0d(q_1)^{-\frac{4\alpha}{2\alpha-\gamma}}+r_0d(q_2)^{-\frac{4\alpha}{2\alpha-\gamma}}+r_0^2(d(q_1)d(q_2))^{-\frac{2\alpha}{2\alpha-\gamma}}).
\end{align*}
 By applying $a_{t_0}$ with $t_0=\ln (l/r_0^\frac12)/\alpha$ on the equation $nq_1=q_2$, we have 
\begin{equation}\label{ceqn}
(a_{t_0} na_{-t_0})a_{t_0}q_1=a_{t_0}q_2
\end{equation}
and by calculations $$d(a_{t_0}q_1)\sim d(a_{t_0}q_2)\sim r_0^\frac12\text{ and }d_G(a_{t_0} na_{-t_0},e)\ll l^{-\frac\gamma{2\alpha-\gamma}}.$$ 
The rest of the proof is similar to Proposition \ref{cp71}.
\end{proof}

Now we are ready to prove Theorem \ref{ctheorem}.

\begin{proof}[Proof of Theorem \ref{ctheorem} in the case $\mathfrak g_{2\alpha}=0$.]
Let $C>0$ be as in Proposition \ref{cp63}. For any $l>0$, by Proposition \ref{cp63}, the following is an open cover of $S_\gamma^c\cap U_0(e\Gamma)$ in $U_0(e\Gamma)$: $$\{B_{N_+}(Cd(q)^{-\frac{\alpha}{\alpha-\gamma}})q: q\text{ rational in $U_0(e\Gamma)$ with }d(q)>l\}.$$
We denote this open cover by $\mathcal Y(l)$. Note that the diameters of open subsets in $\mathcal Y(l)$ uniformly converge to 0 as $l\to\infty$. Now let $\delta>0$ and $l$ sufficiently large. By Proposition \ref{cp56}, for $\mathcal Y(l)$ we calculate 
\begin{eqnarray*}
&{}&\sum_{q\in U_0(e\Gamma), d(q)>l}\text{diam}^\delta(B_{N_+}(Cd(q)^{-\frac\alpha{\alpha-\gamma}})q)\\
&\leq&C^{-\delta\frac\alpha{\alpha-\gamma}}\sum_{n\in\mathbb N}\sum_{2^n\leq d(q)\leq 2^{n+1}} d(q)^{-\delta\frac\alpha{\alpha-\gamma}}\\
&\sim&\sum_{n\in\mathbb N}(2^n)^{\dim\mathfrak g_\alpha}(2^n)^{-\delta\frac\alpha{\alpha-\gamma}},
\end{eqnarray*}
which converges if $\delta>\frac{\alpha-\gamma}\alpha\dim\mathfrak g_\alpha$.
This implies that $$\dim_H S_\gamma^c\cap U_0(e\Gamma)\leq\frac{\alpha-\gamma}\alpha\dim\mathfrak g_\alpha.$$

For the lower bound, we fix a sufficiently small $\epsilon>0$ and construct a tree-like subset in $U_0(e\Gamma)$ by induction. Let $\mathcal A_0=\{U_0(e\Gamma)\}$ and $\mathbf A_0=U_0(e\Gamma)$. Let $r_0$ be as in Proposition \ref{cp71}, and pick a sufficiently large number $l_1$. Define $$\mathcal A_1=\left\{B_{N_+}(r_0d(q)^{-\frac\alpha{\alpha-(\gamma+\epsilon)}})q\Bigg|q\in S(U,l_1/2,l_1)\right\}$$ and $\mathbf A_1=\bigcup\mathcal A_1$. Suppose that we find $l_1<l_2<\dots<l_j$ and construct families $\mathcal A_j,\mathcal A_{j-1},\dots,\mathcal A_0$ and subsets $\mathbf A_j\subseteq\mathbf A_{j-1}\subseteq\dots\subseteq \mathbf A_1\subseteq \mathbf A_0$. Now by Proposition \ref{cp71}, we can find a sufficiently large $l_{j+1}>0$ such that 
\begin{enumerate}
\item $\log l_{j+1}\geq j^2\log(l_jl_{j-1}\dots l_1)$.
\item For every $B_{N_+}(r_0d(q)^{-\frac\alpha{\alpha-(\gamma+\epsilon)}})q\in\mathcal A_j$, it contains at least $$C_0l_{j+1}^{\dim\mathfrak g_\alpha}\mu_{N_+}(B_{N_+}(r_0d(q)^{-\frac\alpha{\alpha-(\gamma+\epsilon)}})q)$$ open boxes of the form $B_{N_+}(r_0d(\tilde q)^{-\frac\alpha{\alpha-(\gamma+\epsilon)}})\tilde q$ with $$\tilde q\in S(B_{N_+}(r_0d(q)^{-\frac\alpha{\alpha-(\gamma+\epsilon)}})q,l_{j+1}/2,l_{j+1}).$$
\end{enumerate}
We denote by $\mathcal A_{j+1}$ the collection of the boxes $B_{N_+}(r_0d(\tilde q)^{-\frac\alpha{\alpha-(\gamma+\epsilon)}})\tilde q$ with $$\tilde q\in S(B_{N_+}(r_0d(q)^{-\frac\alpha{\alpha-(\gamma+\epsilon)}})q,l_{j+1}/2,l_{j+1}),$$ where $B_{N_+}(r_0d(q)^{-\frac\alpha{\alpha-(\gamma+\epsilon)}})q$ runs through all the open boxes in $\mathcal A_j$. Then we define $\mathbf A_{j+1}=\bigcup\mathcal A_{j+1}$.

Now we take $\mathbf A_\infty=\bigcap_{j=0}^\infty\mathbf A_j$ and $\mathcal A=\bigcup_{j=0}^\infty\mathcal A_j$. By the construction of $\mathbf A_j$'s and Proposition \ref{cp64}, we know that $\mathbf A_\infty\subset S_\gamma^c\cap U_0(e\Gamma)$. Also we have $$\Delta_j(\mathcal A)\sim l_{j+1}^{\dim\mathfrak g_\alpha} l_{j+1}^{-\frac\alpha{\alpha-(\gamma+\epsilon)}\dim\mathfrak g_\alpha}\text{ and }d_j(\mathcal A)=r_0 l_j^{-\frac\alpha{\alpha-(\gamma+\epsilon)}}.$$ By Theorem \ref{cthm1}, we compute 
\begin{align*}
\dim_H(\mathbf A_\infty)&\geq\dim\mathfrak g_\alpha-\limsup_{j\to\infty}\frac{\sum_{i=0}^j\log\left(l_{i+1}^{-\frac{\gamma+\epsilon}{\alpha-(\gamma+\epsilon)}\dim\mathfrak g_\alpha}\right)}{\log\left(l_{j+1}^{-\frac\alpha{\alpha-(\gamma+\epsilon)}}\right)}\\
&=\dim\mathfrak g_\alpha\left(1-\frac{\gamma+\epsilon}\alpha\right).
\end{align*}
Let $\epsilon\to0$ and we get $$\dim_H S_\gamma^c\cap U_0(e\Gamma)\geq\frac{\alpha-\gamma}\alpha\dim\mathfrak g_\alpha.$$ This finishes the proof of the theorem in the case $\mathfrak g_{2\alpha}=0$.
\end{proof}

\begin{proof}[Proof of Theorem \ref{ctheorem} in the case $\mathfrak g_{2\alpha}\neq 0$.]
Let $C>0$ be as in Proposition \ref{cp66}. For any $l>0$, by Proposition \ref{cp66}, the following is an open cover of $S_\gamma^c\cap U_0(e\Gamma)$ in $U_0(e\Gamma)$: $$\{B_{N_+}(Cd(q)^{-\frac{2\alpha}{2\alpha-\gamma}}, Cd(q)^{-\frac{4\alpha}{2\alpha-\gamma}})q: q\text{ rational in $U_0(e\Gamma)$ with }d(q)>l\}.$$
We denote this open cover by $\mathcal Y(l)$. 

For each $B_{N_+}(Cd(q)^{-\frac{2\alpha}{2\alpha-\gamma}}, Cd(q)^{-\frac{4\alpha}{2\alpha-\gamma}})q\in\mathcal Y(l)$, we divide it into cubes of side length $Cd(q)^{-\frac{4\alpha}{2\alpha-\gamma}}$, and there are $(d(q)^{\frac{2\alpha}{2\alpha-\gamma}})^{\dim\mathfrak g_\alpha}$ such cubes. Let $\mathcal F(l)$ be the collection of all these cubes as $B_{N_+}(Cd(q)^{-\frac{2\alpha}{2\alpha-\gamma}}, Cd(q)^{-\frac{4\alpha}{2\alpha-\gamma}})q$ runs through all open boxes in $\mathcal Y(l)$. Note that $\mathcal F(l)$ is an open cover of $S_\gamma^c\cap U_0(e\Gamma)$, and the diameters of the cubes in $\mathcal F(l)$ uniformly converge to 0 as $l\to\infty$. 

Now let $\delta>0$ and $l$ sufficiently large. By Proposition \ref{cp56}, for $\mathcal F(l)$ we calculate
\begin{eqnarray*}
\sum_{B\in\mathcal F(l)}\text{diam}(B)^\delta&\sim&\sum_{d(q)>l} d(q)^{-\delta\frac{4\alpha}{2\alpha-\gamma}}(d(q)^{\frac{2\alpha}{2\alpha-\gamma}})^{\dim\mathfrak g_\alpha}\\
&\leq&\sum_{n\in\mathbb N}\sum_{2^n\leq d(q)\leq 2^{n+1}} d(q)^{-\delta\frac{4\alpha}{2\alpha-\gamma}}(d(q)^{\frac{2\alpha}{2\alpha-\gamma}})^{\dim\mathfrak g_\alpha}\\
&\sim&\sum_{n\in\mathbb N}(2^n)^{\dim\mathfrak g_\alpha+2\dim\mathfrak g_{2\alpha}}(2^n)^{-\delta\frac{4\alpha}{2\alpha-\gamma}}((2^n)^{\frac{2\alpha}{2\alpha-\gamma}})^{\dim\mathfrak g_\alpha},\end{eqnarray*}
which converges if $\delta>\frac{4\alpha-\gamma}{4\alpha}\dim\mathfrak g_\alpha+\frac{2\alpha-\gamma}{2\alpha}\dim\mathfrak g_{2\alpha}$.
This implies that $$\dim_H S_\gamma^c\cap U_0(e\Gamma)\leq\frac{4\alpha-\gamma}{4\alpha}\dim\mathfrak g_\alpha+\frac{2\alpha-\gamma}{2\alpha}\dim\mathfrak g_{2\alpha}.$$ 

For the lower bound, we fix a sufficiently small $\epsilon>0$ and construct a tree-like set in $U_0$ by induction. Let $\mathcal A_0=\{U_0(e\Gamma)\}$ and $\mathbf A_0=U_0(e\Gamma)$. Let $r_0$ be as in Proposition \ref{cp72} and pick a sufficiently large number $l_1$. Define $$\mathcal A_1'=\left\{B_{N_+}(r_0d(q)^{-\frac{2\alpha}{2\alpha-(\gamma+\epsilon)}}, r_0d(q)^{-\frac{4\alpha}{2\alpha-(\gamma+\epsilon)}})q\Bigg|q\in S(U_0(e\Gamma),l_1/2,l_1)\right\}.$$ For each $B_{N_+}(r_0d(q)^{-\frac{2\alpha}{2\alpha-(\gamma+\epsilon)}}, r_0d(q)^{-\frac{4\alpha}{2\alpha-(\gamma+\epsilon)}})q$ in $\mathcal A_1'$, we devide it into cubes of side length $r_0d(q)^{-\frac{4\alpha}{2\alpha-(\gamma+\epsilon)}}$. Let $\mathcal A_1$ be the family of all these cubes as $B_{N_+}(r_0d(q)^{-\frac{2\alpha}{2\alpha-(\gamma+\epsilon)}}, r_0d(q)^{-\frac{4\alpha}{2\alpha-(\gamma+\epsilon)}})q$ runs through $\mathcal A_1'$, and define $\mathbf A_1=\bigcup\mathcal A_1$. Suppose that we find $l_1<l_2<\dots<l_j$, and construct families $\mathcal A_j,\mathcal A_{j-1},\dots,\mathcal A_0$ and subsets $\mathbf A_j\subseteq\mathbf A_{j-1}\subseteq\dots\subseteq \mathbf A_1\subseteq \mathbf A_0$. Now by Proposition \ref{cp72}, we can find a sufficiently large $l_{j+1}>0$ such that 
\begin{enumerate}
\item $\log l_{j+1}\geq j^2\log(l_jl_{j-1}\dots l_1)$.
\item For every $B\in\mathcal A_j$, it contains at least $$C_0l_{j+1}^{\dim\mathfrak g_\alpha+2\dim\mathfrak g_{2\alpha}}\mu_{N_+}(B)$$ sub-open boxes of the form $B_{N_+}(r_0d(\tilde q)^{-\frac{2\alpha}{2\alpha-(\gamma+\epsilon)}}, r_0d(\tilde q)^{-\frac{4\alpha}{2\alpha-(\gamma+\epsilon)}})\tilde q$ with $$\tilde q\in S(B,l_{j+1}/2,l_{j+1}).$$
\end{enumerate}
For each $\tilde q\in S(B,l_{j+1}/2,l_{j+1})$, we devide $B_{N_+}(r_0d(\tilde q)^{-\frac{2\alpha}{2\alpha-(\gamma+\epsilon)}}, r_0d(\tilde q)^{-\frac{4\alpha}{2\alpha-(\gamma+\epsilon)}})\tilde q$ into cubes of side length $r_0d(\tilde q)^{-\frac{4\alpha}{2\alpha-(\gamma+\epsilon)}}$. We denote by $\mathcal A_{j+1}$ the collection of all these cubes, as $\tilde q$ runs through $S(B,l_{j+1}/2,l_{j+1})$ and $B$ runs over all the subsets in $\mathcal A_j$. Then define $\mathbf A_{j+1}=\bigcup\mathcal A_{j+1}$.

Now we take $\mathbf A_\infty=\bigcap_{j=0}^\infty\mathbf A_j$ and $\mathcal A=\bigcup_{j=0}^\infty\mathcal A_j$. By the construction of $\mathbf A_j$'s and Proposition \ref{cp67}, we know that $\mathbf A_\infty\subset S_\gamma^c\cap U_0(e\Gamma)$. Also we have $$\Delta_j(\mathcal A)\sim l_{j+1}^{\dim\mathfrak g_\alpha+2\dim\mathfrak g_{2\alpha}} l_{j+1}^{-\frac{2\alpha}{2\alpha-(\gamma+\epsilon)}\dim\mathfrak g_\alpha}l_{j+1}^{-\frac{4\alpha}{2\alpha-(\gamma+\epsilon)}\dim\mathfrak g_{2\alpha}}$$ and $$d_j(\mathcal A)=r_0 l_j^{-\frac{4\alpha}{2\alpha-(\gamma+\epsilon)}}.$$ By Theorem \ref{cthm1}, we calculate
\begin{eqnarray*}
&{}&\dim_H S_\gamma^c\cap U_0(e\Gamma)\geq\dim_H(\mathbf A_\infty)\\
&\geq&\dim\mathfrak g_\alpha+\dim\mathfrak g_{2\alpha}-\limsup_{j\to\infty}\frac{\sum_{i=0}^j\log\left(l_{i+1}^{-\frac{\gamma+\epsilon}{2\alpha-(\gamma+\epsilon)}\dim\mathfrak g_\alpha-\frac{2(\gamma+\epsilon)}{2\alpha-(\gamma+\epsilon)}\dim\mathfrak g_{2\alpha}}\right)}{\log\left(l_{j+1}^{-\frac{4\alpha}{2\alpha-(\gamma+\epsilon)}}\right)}\\
&=&\left(1-\frac{\gamma+\epsilon}{2\alpha}\right)\dim\mathfrak g_{2\alpha}+\left(1-\frac{\gamma+\epsilon}{4\alpha}\right)\dim\mathfrak g_\alpha.
\end{eqnarray*}
Let $\epsilon\to0$ and we have
\begin{eqnarray*}
\dim_H S_\gamma^c\cap U_0(e\Gamma)&\geq&\left(1-\frac\gamma{2\alpha}\right)\dim\mathfrak g_{2\alpha}+\left(1-\frac\gamma{4\alpha}\right)\dim\mathfrak g_\alpha.
\end{eqnarray*}
This completes the proof of the theorem.
\end{proof}

\section{Proof of Theorem \ref{ccor}}\label{pf2}
In this section we will prove Theorem \ref{ccor}. Let $$S(i,\gamma)=\{p\in G/\Gamma|\exists C>0 \text{ s.t. } \eta(a_tp)\chi_{Y_i}(a_tp)\geq Ce^{-\gamma t}\chi_{Y_i}(a_tp)\;(\forall t>0)\}.$$
By definition, we know that $$S_{\gamma_1,\dots,\gamma_k}=S(1,\gamma_1)\cap\cdots\cap S(k,\gamma_k)$$ and hence $$\dim_H S_{\gamma_1,\dots,\gamma_k}^c=\max_{1\leq i\leq k}\dim_H S(i,\gamma_i)^c.$$ So in order to prove Theorem \ref{ccor}, it is enough to prove the following

\begin{theorem}\label{ccortheorem}
Let $i\in\{1,\dots,k\}$ and $U$ be an open subset in $G/\Gamma$. If $\mathfrak g_{2\alpha}=0$, then the Hausdorff dimension of $S(i,\gamma)^c\cap U$ $(0\leq\gamma<\alpha)$ is $$\dim\mathfrak g_{-\alpha}+\dim\mathfrak g_0+\frac{\alpha-\gamma}\alpha\dim\mathfrak g_\alpha.$$
If $g_{2\alpha}\neq\emptyset$, then the Hausdorff dimension of $S(i,\gamma)^c\cap U$ $(0\leq\gamma<2\alpha)$ is
$$\dim\mathfrak g_{-2\alpha}+\dim\mathfrak g_{-\alpha}+\dim\mathfrak g_0+\frac{4\alpha-\gamma}{4\alpha}\dim\mathfrak g_\alpha+\frac{2\alpha-\gamma}{2\alpha}\dim\mathfrak g_{2\alpha}.$$
\end{theorem}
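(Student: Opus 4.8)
The plan is to re-run the argument of Sections 5--7 with a single cusp in place of the whole set $\Sigma$. The upper bound needs no new work. Indeed, requiring $\eta(a_tp)\geq Ce^{-\gamma t}$ for every $t>0$ is stronger than requiring it only at those $t$ with $a_tp\in Y_i$, so $S_\gamma\subseteq S(i,\gamma)$, hence $S(i,\gamma)^c\cap U\subseteq S_\gamma^c\cap U$, and Theorem \ref{cmaintheorem} already gives $\dim_H(S(i,\gamma)^c\cap U)$ at most the asserted value. It remains to prove the matching lower bound, and for this --- exactly as in Section 3, using the $N_-AN_+$ coordinates near a point --- it suffices to bound $\dim_H(S(i,\gamma)^c\cap U_0(e\Gamma))$ from below by $\tfrac{\alpha-\gamma}{\alpha}\dim\mathfrak g_\alpha$ when $\mathfrak g_{2\alpha}=\emptyset$ (resp. by $\tfrac{4\alpha-\gamma}{4\alpha}\dim\mathfrak g_\alpha+\tfrac{2\alpha-\gamma}{2\alpha}\dim\mathfrak g_{2\alpha}$ when $\mathfrak g_{2\alpha}\neq\emptyset$). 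After the conjugation normalizing the base point to $e\Gamma$ --- which only relabels the cusps --- let $\sigma\in\Sigma$ be the element attached to $\xi_i$.

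The one genuinely new ingredient is a cusp-localized version of Propositions \ref{cp64} and \ref{cp67}: if $p\in U_0(e\Gamma)$ lies in infinitely many boxes $B_{N_+}(Cd(q_n)^{-\alpha/(\alpha-(\gamma+\epsilon))})q_n$ (resp. the two-scale boxes of Proposition \ref{cp67}) with $q_n$ distinct \emph{$\sigma$-rational} points and $d(q_n)\to\infty$, then $p\notin S(i,\gamma)$. At the times $t_n=\ln d(q_n)^{1/(\alpha-(\gamma+\epsilon))}$ (resp. $t_n=\ln d(q_n)^{2/(2\alpha-(\gamma+\epsilon))}$) the estimate $\eta(a_{t_n}p)\ll e^{-(\gamma+\epsilon)t_n}$ is obtained exactly as in the proofs of Propositions \ref{cp64} and \ref{cp67}; what must be added is that $a_{t_n}p\in Y_i$. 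This follows because $q_n$ being $\sigma$-rational forces, by writing $q_n$ in the form $ma_{t_0}n\sigma\Gamma$ and using the description of the Siegel set in Theorem \ref{cthm2}, the point $a_{t_n}q_n$ to lie in $\Omega(s_n,V_0)\sigma\Gamma$ with $s_n=t_n+t_0\to\infty$, i.e. arbitrarily deep in the cusp $\xi_i$; since $d(a_{t_n}q_n,a_{t_n}p)_{G/\Gamma}$ stays bounded while the depth tends to infinity, $a_{t_n}p$ lies in the fixed neighborhood $Y_i$ of $\xi_i$ for all large $n$. Hence $\eta(a_{t_n}p)\chi_{Y_i}(a_{t_n}p)\ll e^{-(\gamma+\epsilon)t_n}\chi_{Y_i}(a_{t_n}p)$ along $t_n\to\infty$, so $p\notin S(i,\gamma)$.

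With this in hand the lower bound is a transcription of the tree-like construction in Section 7. At every stage I would keep only $\sigma$-rational points, invoking Proposition \ref{cp53} (resp. Proposition \ref{cp54}) in place of Proposition \ref{cp56} to count them: the counting exponent is unchanged, so the values of $\Delta_j(\mathcal A)$ and $d_j(\mathcal A)$, and therefore the bound from Theorem \ref{cthm1}, come out exactly as in Section 7. The disjointness of the boxes attached to distinct such points is precisely the content of Propositions \ref{cp71} and \ref{cp72}, whose proofs use only Proposition \ref{cp40} and so apply verbatim to $\sigma$-rational points. Finally, $\mathbf A_\infty\subset S(i,\gamma)^c$ now follows from the cusp-localized criterion of the previous paragraph rather than from Propositions \ref{cp64}--\ref{cp67}, and letting $\epsilon\to0$ yields the required lower bound.

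I expect the main obstacle to be exactly the cusp-localization: one must be sure that the orbit $\{a_tp\}$ of a point of the Cantor-type set $\mathbf A_\infty$ actually visits $Y_i$ --- and not merely some other cusp neighborhood --- at the times where its injectivity radius is small. This is what $\sigma$-rationality of the approximating points $q_n$ guarantees, via the $MAN_-\sigma$-form of $q_n$ and Theorem \ref{cthm2}; once this is granted, every remaining step is a routine adaptation of the proof of Theorem \ref{ctheorem}.
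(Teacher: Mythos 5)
Your proposal is correct, and on the lower bound it is essentially the paper's own argument: replace rational points by $\sigma_i$-rational points, count them with Propositions \ref{cp53}/\ref{cp54} in place of Proposition \ref{cp56}, reuse the disjointness Propositions \ref{cp71}/\ref{cp72} verbatim, and run the tree-like construction with Theorem \ref{cthm1}, so that $\Delta_j$ and $d_j$ are unchanged. Where you differ is in the upper bound and in which localization statement gets written out. The paper reruns the covering argument cuspwise, and the one point it singles out as new is the cuspwise analogue of Propositions \ref{cp62}/\ref{cp65}: if $p\in S(i,\gamma)^c$ and $a_{t_0}p\in Y_i$, the rational point $q$ produced there is automatically $\sigma_i$-rational. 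You bypass this entirely via the inclusion $S_\gamma\subseteq S(i,\gamma)$, hence $S(i,\gamma)^c\subseteq S_\gamma^c$, so Theorem \ref{cmaintheorem} already gives the upper bound --- a genuine (and clean) simplification, since the target dimension in Theorem \ref{ccortheorem} is the same expression. Conversely, the localization you do spell out --- that approximation by $\sigma_i$-rational points forces $a_{t_n}p\in Y_i$ at the times $t_n$, i.e. the cuspwise analogue of Propositions \ref{cp64}/\ref{cp67} needed to conclude $\mathbf A_\infty\subset S(i,\gamma)^c$ --- is exactly the step the paper disposes of with the phrase ``Same thing happens in Proposition \ref{cp64} and Proposition \ref{cp67}'', and your justification is the right one; to make it airtight, note that the $A$-component $t_0=t_0(n)$ of $q_n=ma_{t_0}n_-\sigma_i\Gamma$ satisfies $e^{-\alpha t_0}\sim d_{\sigma_i}(q_n)\sim d(q_n)$, so $s_n=t_n+t_0\sim\frac{\gamma+\epsilon}{\alpha}t_n$ (resp. $\frac{\gamma+\epsilon}{2\alpha}t_n$) indeed tends to infinity, and the $N_-$-component should first be reduced into a compact fundamental domain using the cocompact lattice $\sigma_i\Gamma\sigma_i^{-1}\cap N_-$ from Theorem \ref{cthm2} before invoking the Siegel-set description of the cusp neighborhood. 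With those two small points filled in, every remaining step matches the paper's proof.
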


Now we will fix a cusp $\xi_i$ for some $i\in\{1,\dots,k\}$ and let $\sigma_i\in\Sigma$ be the element corresponding to $\xi_i$.
\begin{proof}[Proof of Theorem \ref{ccortheorem}]
The proof for $S(i,\gamma)^c\cap U$ is almost identical to the proof of Theorem \ref{cmaintheorem}, except that we replace rational points by $\sigma_i$-rational points, denominators by $\sigma_i$-denominators. In fact, our discussions in section \ref{crp} are cuspwise, and we can use Proposition \ref{cp53} and Proposition \ref{cp54} to count $\sigma_i$-rational points instead of Proposition \ref{cp56}. Hence Proposition \ref{cp71} and Proposition \ref{cp72} holds also for $\sigma_i$-rational points with $\sigma_i$-denominators. Same thing happens in Proposition \ref{cp64} and Proposition \ref{cp67}. The only thing we need to do is to prove that after replacing by $\sigma_i$-rational points and $\sigma_i$-denominators in Proposition \ref{cp62} and Proposition \ref{cp65} with the assumption that $p\in S(i,\gamma)^c$, the rational $q\in U_0(e\Gamma)$ we obtain is actually a $\sigma_i$-rational point. We will prove this for the case $\mathfrak g_{2\alpha}=0$. The case $\mathfrak g_{2\alpha}\neq0$ is similar.

Now assume $\mathfrak g_{2\alpha}=0$ and $p\in S({i,\gamma})^c$ ($a_{t_0}p\in Y_i$). By the proof of Proposition \ref{cp62}, we know that the rational point $q\in U_0(e\Gamma)$ satisfies the condition $$na_{t_0}p=a_{t_0}q$$ for some bounded $n\in N_+$ and we have $d(q)\sim e^{(\alpha-\gamma) t_0}$. This implies that $$d(a_{t_0}q)\sim e^{-\gamma t_0}$$ and the rational point $a_{t_0}q\in Y_i'$ for a small neighborhood $Y_i'$ of $\xi_i$ in $G/\Gamma$. This happens if and only if $q$ (being a rational point in $U_0(e\Gamma)$) is a $\sigma_i$-rational point.
\end{proof}

\section{Diophantine approximation in Heisenberg groups}\label{pf3}
In this section, we prove Theorem \ref{chptheorem}. First we recall some definitions and notations from \cite{HP3}.

Let $$Q(z_0,z_1,\dots,z_n)=-(z_0\overline{z_1}+z_1\overline{z_0})+z_2\overline{z_2}+\dots+z_n\overline{z_n}$$ which is a hermitian form defined on $\mathbb C^{n+1}$. Let $G\subset SL_{n+1}(\mathbb C)$ be the group preserving $Q(z_0,z_1,\dots,z_n)$ and $\Gamma=G(\mathbb Z[i])$. In the sequel, we will consider the homogeneous space $\Gamma\backslash G$. Note that we change the notation of the homogeneous space and the lattice $\Gamma$ now acts on the left side of $G$. The semisimple flow we will study is $$a_t=\left(\begin{array}{ccc}e^t & 0 & 0 \\0 & e^{-t} & 0 \\0 & 0 & I_{n-1}\end{array}\right)\quad ( t\in\mathbb R)$$ and it acts on $\Gamma\backslash G$ by right multiplication. Here $I_{n-1}$ denotes the $(n-1)$ by $(n-1)$ identity matrix. The notation for the adjoint action of $\{a_t\}$ on the Lie algebra $\mathfrak g$ of $G$ is now changed to $a_{-t }va_t $ $(v\in\mathfrak g)$. By section \ref{pre}, the corresponding subgroup $N_+$ of $\{a_t\}$ is equal to $$N_+=\left\{\left(\begin{array}{ccc}1& 0 & 0 \\v &1& \zeta^* \\ \zeta & 0 & I_{n-1}\end{array}\right):\zeta\in\mathbb C^{n-1},v\in\mathbb C,2\Re v=|\zeta|^2\right\}$$ which is isomorphic to the Heisenberg group $\mathcal H_{2n-1}(\mathbb R)$. Here $\zeta^*$ denotes the conjugate transpose of $\zeta$. The simple root $\alpha$ of $\{a_t\}$ is equal to 1.

We use the model of Siegel domain for the complex hyperbolic space $\mathbf H_{\mathbb C}^n$ (see section 3.8 in \cite{HP3}). Specifically, $$\mathbf H_{\mathbb C}^n=\{(1,w_1,w)\in\{1\}\times\mathbb C\times\mathbb C^{n-1}: 2\Re w_1-|w|^2>0\}$$ with the boundary $$\partial\mathbf H_{\mathbb C}^n=\{(1,w_1,w)\in\{1\}\times\mathbb C\times\mathbb C^{n-1}:2\Re w_1-|w|^2=0\}\cup\{\infty\}.$$ Here $\infty=(0,1,0)\in\mathbb C\times\mathbb C\times\mathbb C^{n-1}$. We will denote by $O=(1,0,0)\in\partial\mathbf H_{\mathbb C}^n$ and $o=(1,1,0)\in\mathbf H_{\mathbb C}^n$.

The group $G$ acts on $\mathbb C^{n+1}=\mathbb C\times\mathbb C\times\mathbb C^{n-1}$ simply by matrix multiplication, which induces an action of $G$ on $\mathbf H_{\mathbb C}^n$ by rescaling a vector $(w_0,w_1,w)$ to be $(1,w_1/w_0,w/w_0)$. The action of $G$ on $\mathbf H_{\mathbb C}^n$ extends naturally to an action on the boundary $\partial\mathbf H_{\mathbb C}^n$. In particular, for any $g\in N_+$ with $$g=\left(\begin{array}{ccc}1& 0 & 0 \\v &1& \zeta^* \\ \zeta & 0 & I_{n-1}\end{array}\right),$$ the action of $g$ on the boundary point $O$ is $$g.O=(1,v,\zeta).$$

Let $\mathcal M=\Gamma\backslash G/K\cong\Gamma\backslash\mathbf H_{\mathbb C}^n$ where $K$ is the stabilizer of $o=(1,1,0)$ in $G$. Then the semisimple flow $\{a_t\}$ on $\Gamma\backslash G$ corresponds to the geodesic flow on $\mathcal M$. We will denote by $\pi_K$ the projection from $\Gamma\backslash G$ to $\mathcal M$. For simplicity, we will assume that $\mathcal M$ has only one cusp $\xi=\infty$. For general case, one could essentially follow  \cite{Bor, HP3} and the arguments below.

We denote by $\mathcal L$ the set of geodesic lines starting from $\xi=\infty$ in $\mathcal M$. The geodesic lines in $\mathcal L$ which start from $\xi=\infty$ and diverge to $\xi=\infty$ are called rational lines, and the other geodesic lines in $\mathcal L$ are called irrational lines. Readers may refer to \cite{HP3} for the definitions of the height function $\beta$ on $\mathcal M$, the Hamenst\"adt distance $d_\infty$ on $\partial\mathbf H_{\mathbb C}^n$ and the depth $D(r)$ of a rational geodesic line $r$ in $\mathcal L$. 

In the following, we fix a small open subset $U_0\subset N_+$. Note that the orbit $\{(\Gamma e)a_t\}$ diverges to the cusp $\xi=\infty$ as $t\to\pm\infty$ since $\Gamma\cap N_-\neq\{e\}$ and $\Gamma\cap N_+\neq\{e\}$. This implies that for any $p\in (\Gamma e)U_0$, $\{pa_t\}$ also diverges to $\xi=\infty$ as $t\to-\infty$. Therefore $\pi_K(pa_t)$ is a geodesic line in $\mathcal L$.

\begin{lemma}\label{chp91}
Let $p\in (\Gamma e)U_0$ with $p=\Gamma g$ for some $g\in U_0$. Then $p$ is rational in the sense of Definition \ref{def51} if and only if $\pi_K(pa_t)$ is a rational geodesic line in $\mathcal L$ in the sense of \cite{HP3}.
\end{lemma}

\begin{proof}
Suppose that $p$ is rational in the sense of Definition \ref{def51}. Then by definition, $pa_t$ diverges in $\Gamma\backslash G$ to the cusp $\xi=\infty$ as $t\to\infty$ and hence so does $\pi_K(pa_t)$ in $\mathcal M$. Since $pa_t$ also diverges to the cusp as $t\to-\infty$. This implies that $\pi_K(pa_t)$ is a rational geodesic line in $\mathcal L$ in the sense of \cite{HP3}. 

Conversely, if $\pi_K(pa_t)$ is a rational geodesic line in $\mathcal L$, then $pa_t$ diverges to the cusp as $t\to\infty$ and hence by Corollary 6.2 in \cite{D0} and Proposition \ref{cp51}, we know that $p$ is rational in the sense of Definition \ref{def51}.
\end{proof}

\begin{lemma}\label{r91}
Let $p\in (\Gamma e)U_0$ with $p=\Gamma g$ for some $g\in U_0$. Then $p$ is rational in the sense of Definition \ref{def51} if and only if $g\in N_+\cong\mathcal H_{2n-1}(\mathbb R)$ is a rational point in $\mathcal H_{2n-1}(\mathbb Q)$.
\end{lemma}
\begin{proof}
Suppose that $g\in N_+\cong\mathcal H_{2n-1}(\mathbb R)$ is rational in $\mathcal H_{2n-1}(\mathbb Q)$. We know that $\Stab(p)=g^{-1}\Gamma g$ is commensurable with $\Gamma$. Hence $$\Stab(p)\cap N_-\neq\{e\}$$ and $p$ is rational by Definition \ref{def51}.

Conversely, suppose that $p$ is rational in the sense of Definition \ref{def51}. Then by Lemma \ref{chp91}, the orbit $\pi_K(pa_t)$ diverges to $\xi=\infty$ in $\Gamma\backslash\mathbf H_{\mathbb C}$, and hence $ga_t.o$ diverges to a rational point in the $\Gamma$-orbit of $\xi=\infty\in\partial\mathbf H_{\mathbb C}$. By calculations, this rational point is equal to $g.O$, where $O=(1,0,0)\in\partial\mathbf H_{\mathbb C}^n$. Hence $g$ is rational.
\end{proof}

\begin{lemma}\label{chp92}
Let $p\in (\Gamma e)U_0$ be rational in the sense of Definition \ref{def51} with $p=\Gamma g$ for some $g\in U_0$. Then we have $$h(g)\sim d(p).$$ Here by Lemma \ref{r91}, $g$ is a point in $\mathcal H_{2n-1}(\mathbb Q)$, $h(g)$ is the height of $g\in\mathcal H_{2n-1}(\mathbb Q)$ as in Definition \ref{def14} and $d(p)$ is the denominator of $p$ defined in the Definition \ref{def55}. The implicit constant depends only on $U_0$ and $G/\Gamma$.
\end{lemma}
\begin{proof}
In view of Lemma \ref{chp91}, we will consider the rational line $\pi_K(\{pa_t\})$ and its depth $D(\pi_K(\{pa_t\}))$. Fix a level set $\beta^{-1}(l)\subset\mathcal M$ for a sufficiently large $l>0$. Then there exists a constant $\epsilon>0$ such that for any $p'\in\pi_K^{-1}(\beta^{-1}(l))$ we have $$\eta(p')\sim\epsilon.$$ Let $s_0>0$ be the largest number with $pa_{s_0}\in \pi_K^{-1}(\beta^{-1}(l))$. Then by definition, we have 
\begin{equation}\label{cheq1}
\epsilon\sim\eta(pa_{s_0})\sim e^{-2s_0}d(p)^2.
\end{equation}
On the other hand, since $(\Gamma e)a_t$ diverges as $t\to-\infty$, there exists $t_0>0$ such that $t_0$ is the largest number with $(\Gamma e)a_{-t_0}\in\pi_K^{-1}(\beta^{-1}(l))$. This implies that 
\begin{equation}\label{cheq2}
\epsilon\sim\eta((\Gamma e)a_{-t_0})\sim e^{2t_0}. 
\end{equation}
As $p\in (\Gamma e)U_0$ and $a_{-t}$ contracts $U_0\subset N_+$ as $t\to\infty$, $pa_{-t_0}$ is near the subset $\pi_K^{-1}(\beta^{-1}(l))$. By definition of the depth function, this implies that there exists a constant $C_1>0$ such that $$|D(\pi_K(\{pa_t\}))-(s_0+t_0)|\leq C_1.$$ Also by equations (\ref{cheq1}) and (\ref{cheq2}), we know that $$d(p)^2\sim e^{2(s_0+t_0)}.$$ Hence $$d(p)\sim e^{D(\pi_K(\{pa_t\}))}.$$ By Proposition 3.14 in \cite{HP3}, for a rational geodesic line $r$ in $\mathcal L$, we have $$D(r)=\ln h(r).$$ Here, by Lemma \ref{chp91} and Lemma \ref{r91}, $r$ is identified with a point in $\mathcal H_{2n-1}(\mathbb Q)$ and $h(r)$ is the height of $r\in\mathcal H_{2n-1}(\mathbb Q)$. In our case, the geodesic $\pi_K(\{pa_t\})$ is identified with $g$. Hence we have $$d(p)\sim h(g).$$ Note that the implicit constant depends only on $U_0$ and $G/\Gamma$. This completes the proof of the lemma.
\end{proof}

Using the same arguments in the proof of Proposition \ref{cp61}, one could prove the following lemma.

\begin{lemma}\label{cl}
Let $p\in \Gamma\backslash G$ be a non-rational point. If $p$ is not Diophantine of type $\gamma$, then for any sufficiently small $\epsilon>0$ there exists a sequence $t_n\to\infty$ and $v_n$ satisfing the following conditions
\begin{enumerate}
\item $v_n$ is unipotent and $$\rho_0\epsilon e^{-\gamma l_n}\leq\eta(a_{l_n}p)\leq d_G(v_n,e)\leq3\kappa^3\rho_0\epsilon e^{-\gamma l_n};$$
\item if $v_n=\exp(v_{n,-}+v_{n,0}+v_{n,+})$ where $v_{n,-}\in\mathfrak n_-$, $v_{n,0}\in\mathfrak g_0$ and $v_{n,+}\in\mathfrak n_+$, then $$\frac{\rho_0}{3\kappa^3}\epsilon e^{-\gamma l_n}\leq\|v_{n,-}\|_{\mathfrak g}\leq\kappa^2\rho_0\epsilon e^{-\gamma l_n}.$$
\end{enumerate}
\end{lemma}

\begin{proposition}\label{cl0}
Let $p\in (\Gamma e)U_0$ be a non-rational point. Then $p$ is not Diophantine of type $\gamma$ if and only if for any sufficiently small $\epsilon>0$, there exists a sequence $q_n\in(\Gamma e)U_0$ of distinct rational points with $d(q_n)\to\infty$ such that $$p\in q_nB_{N_+}(C\epsilon^{\frac1{2-\gamma}}d(q_n)^{-\frac{2}{2-\gamma}}, C\epsilon^{\frac{2}{2-\gamma}}d(q_n)^{-\frac{4}{2-\gamma}}).$$ Here $C>0$ is a constant depending only on $G/\Gamma$ and $\gamma$.
\end{proposition}
\begin{proof}
The 'only if' part follows from Lemma \ref{cl} and the arguments in Proposition \ref{cp65} and Proposition \ref{cp66}. Now we prove the 'if' part. 

Suppose that for any sufficiently small $\epsilon>0$, there exists a sequence $q_n\in U(e\Gamma)$ of distinct rational points with $d(q_n)\to\infty$ such that $$p\in q_nB_{N_+}(C\epsilon^{\frac1{2-\gamma}}d(q_n)^{-\frac{2}{2-\gamma}}, C\epsilon^{\frac{2}{2-\gamma}}d(q_n)^{-\frac{4}{2-\gamma}}).$$ Let $t_n=\ln(\epsilon^{-\frac1{2-\gamma}}d(q_n)^{\frac2{2-\gamma}})$. Then one has $$pa_{t_n}\in (q_na_{t_n})B_{N_+}(C,C)$$ and 
\begin{align*}
\eta(q_na_{t_n})\leq& d(q_na_{t_n})^2=d(q_n)^2\epsilon^{\frac2{2-\gamma}}d(q_n)^{-\frac4{2-\gamma}}=\epsilon^{\frac2{2-\gamma}}d(q_n)^{-\frac{2\gamma}{2-\gamma}}\\
=&\epsilon e^{-\gamma t_n}.
\end{align*}
This implies that $$\eta(pa_{t_n})\leq C'\epsilon e^{-\gamma t_n}$$ for some $C'>0$. By Definition \ref{def12}, $p$ is not Diophantine of type $\gamma$.
\end{proof}

\begin{proposition}\label{chp93}
Let $p\in (\Gamma e)U_0$ with $p=\Gamma g$ for some $g\in U_0$. Then $p$ is Diophantine of type $2(1-1/\gamma)$ in the sense of Definition \ref{def12} if and only if $g\in N_+\cong\mathcal H_{2n-1}(\mathbb R)$ is Diophantine of type $\gamma$ in the Heisenberg group $\mathcal H_{2n-1}(\mathbb R)$.
\end{proposition}
\begin{proof}
By Proposition \ref{cl0}, $p$ is not Diophantine of type $2(1-1/\gamma)$ in $\Gamma\backslash G$ if and only if for any sufficiently small $\epsilon>0$, there exists a sequence $q_n\in (\Gamma e)U_0$ of distinct rational points with $d(q_n)\to\infty$ such that $$p\in q_nB_{N_+}(C\epsilon^{\frac\gamma2}d(q_n)^{-\gamma}, C\epsilon^{\gamma}d(q_n)^{-2\gamma}),$$ where $C>0$ is a constant depending only on $G$.

By the definition of the Cygan distance, Lemma \ref{r91} and Lemma \ref{chp92}, this is equivalent to saying that for any sufficiently small $\epsilon>0$, there exists a sequence of rational points $r_n$ in $\mathcal H_{2n-1}(\mathbb Q)$ with $h(r_n)\to\infty$ such that $$d_{\text{Cyg}}(g,r_n)\leq\epsilon^{\frac\gamma2}\frac C{(h(r_n))^\gamma}$$ for some constant $C>0$ depending only on $U_0$ and $G$, which means that $g\in\mathcal H_{2n-1}(\mathbb R)$ is not Diophantine of type $\gamma$.
\end{proof}

\begin{proof}[Proof of Theorem \ref{chptheorem}]
It is enough to compute the Hausdorff dimension of $L_\gamma^c\cap U_0$ for every small open subset $U_0$ in $N_+\cong\mathcal H_{2n-1}(\mathbb R)$. Now fix a small open subset $U_0$. By Proposition \ref{chp93}, we have $$\dim_H L_\gamma^c\cap U_0=\dim_H S_{2(1-1/\gamma)}^c\cap (\Gamma e)U_0,$$ and hence by Theorem \ref{ctheorem} $$\dim_H L_\gamma^c\cap U_0=\frac{1+\gamma}{2\gamma}2(n-1)+\frac1\gamma=\frac{1+\gamma}\gamma n-1.$$ This completes the proof of Theorem \ref{chptheorem}.
\end{proof}

\end{document}